\newtheoremstyle{exampstyle}
  {2pt} %
  {.5pt} %
  {} %
  {} %
  {\bfseries} %
  {.} %
  {.5em} %
  {} %
\newtheorem{theorem}{Theorem}[section]
\newtheorem{corollary}[theorem]{Corollary}
\newtheorem{lemma}[theorem]{Lemma}
\newtheorem{proposition}[theorem]{Proposition}
\theoremstyle{definition}
\newtheorem{definition}[theorem]{Definition}
\newtheorem{example}[theorem]{Example}
\theoremstyle{remark}
\newtheorem*{remark}{Remark}
\newcommand{\F}{\mathbb{F}}
\newcommand{\N}{\mathbb{N}}
\newcommand{\Q}{\mathbb{Q}}
\newcommand{\R}{\mathbb{R}}
\newcommand{\Z}{\mathbb{Z}}
\newcommand{\C}{\mathcal{C}}
\newcommand{\D}{\mathcal{D}}
\newcommand{\T}{\mathcal{T}}
\newcommand{\cR}{\mathcal{R}}
\newcommand{\cS}{\mathcal{S}}
\newcommand{\cT}{\mathcal{T}}
\DeclareMathOperator{\rk}{rk}
\DeclarePairedDelimiter{\ceil}{\lceil}{\rceil}
\DeclarePairedDelimiter\floor{\lfloor}{\rfloor}
\DeclarePairedDelimiter\anglebracket{\langle}{\rangle}
\newcommand{\im}{\text{Im}}
\newcommand{\End}{\text{End}}
\newcommand{\tuple}[3][1]{#2_{#1}, \dots , #2_{#3}}
\newcommand{\ncoord}[2]{\anglebracket{{#1}^1, \ldots, {#1}^{#2}}}
\newcommand{\Trans}{\mathrm{Trans}}
\renewcommand{\Vec}{\mathrm{Vec}}
\newcommand{\Set}{\mathrm{Set}}
\newcommand{\Flows}{\mathrm{Flows}}
\newcommand\rightdsar[1][0.8ex]{%
  \mathbin{\hspace{0.5ex}%
  \scalebox{-1}[-1]{\stackon[-3.7pt]{\buwave{\hspace{#1}}}%
  {\buwave{\hspace{#1}}}}\mkern-2.6mu\scalebox{.7}[1.1]{$\succ$}}
} 
\newcommand\buwave[1]{\raisebox{-1.6pt}{\uwave{\raisebox{2pt}{#1}}}}
\newcommand*{\Parallelogramr}[1][]{
  \pgfpicture\pgfsetroundjoin
    \pgftransformxslant{.6}
    \pgfpathrectangle{\pgfpointorigin}{\pgfpoint{.60em}{.65em}}
    \pgfusepath{stroke,#1}
  \endpgfpicture}
\newcommand*{\Parallelograml}[1][]{
  \pgfpicture\pgfsetroundjoin
    \pgftransformxslant{-.6}
    \pgfpathrectangle{\pgfpointorigin}{\pgfpoint{.60em}{.65em}}
    \pgfusepath{stroke,#1}
  \endpgfpicture}
\newcommand{\triangled}{\raisebox{\depth}{$\bigtriangledown$}}
\newcommand{\triangleu}{\bigtriangleup}
\newcommand{\Lpl}{L_{\scaleobj{.7}{\Parallelograml}}}
\newcommand{\Lpr}{L_{\scaleobj{.7}{\Parallelogramr}}}
\newcommand{\Ltd}{L_{\bigtriangledown}}
\newcommand{\Ltu}{L_{\bigtriangleup}}
\newcommand{\e}{\varepsilon}
\renewcommand{\phi}{\varphi}
\newcommand{\id}{\mathrm{Id}}
\newcommand{\Id}{\id}
\newcommand{\Int}{\mathrm{Int}}
\newcommand{\intAny}[1]{\lfloor #1 \rceil}
\title{Bounding the interleaving distance on concrete categories using a loss function}
\date{\today} 					%
\author{Astrid A. Olave \And Elizabeth Munch}
\begin{document}
\maketitle

\begin{abstract}
 The interleaving distance is arguably the most widely used metric in topological data analysis (TDA) due to its applicability to a wide array of inputs of interest, such as (multiparameter) persistence modules, Reeb graphs, merge trees, and zigzag modules. 
 However, computation of the interleaving distance in the vast majority of this settings is known to be NP-hard, limiting its use in practical settings. 
 Inspired by the work of Chambers et al.~on the interleaving distance for mapper graphs, we solve a more general problem bounding the interleaving distance between generalized persistence modules on concrete categories via a loss function. 
 This loss function measures how far an \textit{assignment}, which can be thought of as an interleaving that might not commute, is from defining a true interleaving. 
 We give settings for which the loss can be computed in polynomial time, including for certain assumptions on 
 $k$-parameter persistence modules.
\end{abstract}

\setlength{\abovedisplayskip}{7pt}
\setlength{\belowdisplayskip}{7pt}

\section{Introduction}

Persistent homology lies at the core of topological data analysis \citep{Ghrist_2007, edelsbrunner2008, Oudot_pers}. 
In its simplest form, persistence arises as follows. 
Given a simplicial complex $K$ together with a filtration $\{K^i\}_{i \in \Z_{\geq 0}}$ where $K_i \subseteq K_j$ for $i \leq j$, one considers the sequence of homology groups $H(K^i)$ which give rise to a \textit{persistence module}, a family of modules over a ring indexed by $\Z_{\geq 0}$ \citep{Zomorodian2004}. 
Later, these ideas were generalized to define persistence modules in categorical terms as functors from linear orders, such as $\Z_{\geq 0}$ or $\R$, to the category $\Vec_\mathbb{F}$ of finite-dimensional vector spaces over a fixed field $\mathbb{F}$ \citep{categorification}. 
Generalizing this notion further, a functor from a poset $P$ to a category $C$ is called a \textit{generalized persistence module}  valued in $C$  indexed by $P$ \citep{Bubenik2015}. 

The \textit{interleaving distance} $d_I$ is one of the most used metrics to quantify the dissimilarity between generalized persistence modules. 
It was first introduced by \cite{2009Chazal} as an algebraic generalization of the combinatorial bottleneck distance for  persistence diagrams. 
Since it simplifies to the bottleneck distance, the interleaving distance 
for persistence modules in $\Vec_\mathbb{F}$ over $\R$, can be computed efficiently in polynomial time. 
Since then, the interleaving distance has been generalized for different posets and categories. 
When $P = \R^2$ and $C = \Vec_\F$, this gives the interleaving distance for multiparameter persistence modules \citep{lesnick2015}.
These functors can represent graph based objects by using $C = \Set$.
For this, $P=(\R,\leq)$ we have an interleaving distance for merge trees \citep{Morozov2013, Beurskens2025}, and when $P = \Int$ gives an interleaving distance for Reeb graphs \citep{deSilva2016}.
There are also notions of labeled versions of these inputs, such as labeled merge trees \citep{Munch2019, Gasparovic2024, Yan2019, Curry2022}, labeled Reeb graphs \citep{Lan2024} and formigrams \citep{Kim2023a}. 
The interleaving distance and its variants has also been studied for 
filtered spaces \citep{Blumberg2023}, 
pushforward and pullback maps \citep{Botnan2020},
zigzag persistence \citep{botnan2018,Bjerkevik2021},
and dynamic metric spaces \citep{Kim2020,Kim2023a}.
Removing the functorial object assumption means that the interleaving distance has been studied for more general categorical formulations \citep{Flow,McFaddin2023,Meehan2017,Cruz2019,Scoccola2020,Berkouk2021,Patel2018}. 

While the interleaving distance is widely used in theory, it's ability to be used in practice is limited due to its computational complexity in the vast majority of settings. 
Other than single parameter persistence modules (where the interleaving distance is the bottleneck distance), labeled merge trees, and formigrams, nearly every other construction is NP-hard to compute \citep{Bjerkevik2018,havard,Agarwal2018a}. 
Despite the fact that one will not find an efficient algorithm unless $\mathrm{P}=\mathrm{NP}$, there has been active recent progress on finding work-arounds to make computation possible \citep{Curry2022,Pegoraro2025,FarahbakhshTouli2019,2024bounding,Chambers2025}. 
In this paper, we take inspiration from the approach of  \cite{2024bounding}. 
There, the focus was on the interleaving distance for discretizations of Reeb graphs called mapper graphs. 
The interleaving distance is defined on generalized persistence modules taking value in $\Set$, and so an interleaving is a pair of natural transformations with particular properties. 
In that paper, they study collections of maps with the potential to be an interleaving but which might fail to satisfy the required commutativity conditions; these are called \emph{assignments} (see the related concept for general sheaves in \cite{Robinson2020}). 
Using these assignments, they established an upper bound on the interleaving distance between mapper graphs by defining a loss function which measures how far the input assignments are from commuting. 
That loss function was combined with a linear programming framework in \cite{Chambers2025} to find experimentally optimal interleavings between mapper graphs.

\paragraph{Our contributions}
In this paper, we extend the idea of assignments from \cite{2024bounding} to generalized persistence modules valued in a concrete category. 
Specifically following \cite{Flow}, we start with a pair of persistence modules $F,G \colon P \to C$ and a flow $\cT$ where $\T_\e = \T(\e)$ is an endomorphism of $P$, i.e.~$\T\colon [0,\infty) \to \End(P)$. 
A pair of natural transformations 
$\Phi\colon F \Rightarrow G \T_\e$ and 
$\Psi\colon G \Rightarrow F\T_\e$ 
are a $\T_\e$-interleaving of $F$ and $G$ if 
\begin{equation*}
F[p \leq \T_\e \T_\e p](p) = \Phi_p \Psi_{\T_\e p} \quad \text{and} \quad G[p \leq \T_\e \T_\e p](p) = \Psi_p \Phi_{\T_\e p}  \text{ for all } p \in P.
\end{equation*}
We set $d_I(F,G)$, their interleaving distance, to be the infimum of the $\e$'s such that there is an $\T_\e$-interleaving of $F$ and $G$. 
A collection of morphisms $\phi = \{ \phi_p\colon F(p) \rightarrow G \T_\e(p)\} $ and $\psi = \{ \psi_p\colon G(p) \rightarrow F \T_\e(p)\} $ are called an $\e$-assignment if   they do not necessarily define a $\T_\e$-interleaving. 
We define a pseudometric for elements in each $F(p)$ (sym.~each $G(p)$) called the merging distance (Def.~\ref{def_merging}). 
Using this distance, we can define a loss function generalizing that of \citep{2024bounding} in Defn.~\ref{def:loss} for a given input $\e$-assignment, ($\phi,\psi$), that measures how far is the assignment from being an interleaving.
We show that this loss, $L(\phi,\psi)$, provides an upper bound for the interleaving distance between persistence modules: $d_I(F,G) \leq \e + L(\phi,\psi)$ (Thm. \ref{th_bounding}).

Then, for reasonable additional conditions, we optimize the computation of $L(\phi,\psi)$ for persistence modules by indexed by $P$ (Thm.~\ref{thm:ControlledLossLine}) or $P^k$ (Thm. \ref{th_loss_Tk}) controlling the number of diagrams should be checked when $P$ is a complete linear order.
This makes it possible to give polynomial time algorithms for the computation for persistence modules valued in categories $C$ whose objects have underlying finite sets (Alg.~\ref{alg:computation}) or in $\Vec_\mathbb{F}$ (Alg.~\ref{alg:lpl_calculation}).
Our result opens up the potential for an algorithmic approach to bound from above the interleaving distance on commonly used constructions such as multiparameter persistence modules using this context.
However, we note that, as in the case of \citep{2024bounding,Chambers2025}, this bound will only be as good as the input assignment provided so we make no promises of tightness. 
Additional care will be needed in the future to find ways to search the space of assignments to optimize this bound; however, because we do not expect to accidentally prove $\mathrm{P}=\mathrm{NP}$ in the course of this work, any of these search algorithms will not be in $P$ otherwise the combination of the bound and the search algorithm would compute the interleaving distance in polynomial time.

\paragraph{Outline}
In Section \ref{Sec_back} we provide a background on posets, category theory and interleaving distance for generalized persistence modules. 
Section \ref{sec_loss} we define the loss function $L$ and bound the interleaving distance. 
The optimization to compute $L$ for different settings generalized persistence modules is done in Section \ref{sec_improve}. 
Finally, we discuss some implications from our results in Section \ref{sec_discussion}.

\section{Background}

\label{Sec_back}

\subsection{Posets}

We start by setting the notation and definitions for posets \citep{Harzheim, Schroder}. 
A \emph{partially ordered set (poset)} is a pair $(P,\leq)$ where $P$ is a set with a reflexive, antisymmetric and transitive relation $\leq$ on $P$. 
We say that $p,q \in P$ are \emph{comparable} if $ p \leq q $ or $q \leq p$.

A subset $S \subset P$ has an \emph{upper bound} $p$ if $p \geq s$ for all $s \in S$. 
The \emph{supremum} (or least upper bound) of $S$, if it exists, is an element $\sup S \in P$ such that for any $p$ which is an upper bound of $S$, $\sup S \leq p$. 
Similarly, for a given set $S$ we can define a \emph{lower bound} and the \emph{infimum} (or greatest lower bound) by reversing the inequalities in the previous definition. 
A poset is called  \emph{complete} if every nonempty subset that has an upper bound has a supremum and every nonempty subset that has a lower bound has infimum. 
A subset $S \subset P$ is \emph{convex} if for any $p,q \in S$ and any $r \in P$ with $p< r < q$, then $r \in S$.

We say $P$ is a \emph{dense poset} if for any $p < q$ there is an intermediate element $p<r<q$, otherwise we say $P$ is discrete. 
In the discrete case, if $p < q $ in $P$ and there is no $r \in P$ such that $p <r < q$, then $p$ is a \emph{predecessor} of $q$ and $q$ is a \emph{successor} of $p$; this is denoted $p \prec q$. 
Points $p$ and $q$ that satisfy $p \prec q$ or $p \prec q$ will also be called \emph{adjacent}. 
We will denote by $d(P)\in \N \cup \{\infty\}$ the maximum number of predecessors of any element of $P$. 

A non-empty sequence $\gamma$ of  $x_i's $ in $P$ is called a \emph{chain} if all $x_i$'s in $\gamma$ are comparable. If $P$ itself is a chain we call it a \emph{total} or \emph{linear} poset. 
    For example, the set $[0,\infty)$ with the usual order is a linear poset. 
    We can form another linear poset $[0,\infty] = [0,\infty) \cup \{ \infty\}$ by adding the element  $\infty$ to $[0,\infty)$ and setting $\infty > r$ for all $r \in [0,\infty)$

The \emph{height} of $P$, $h(P)$, is the length of the longest chain in $P$. 
If there are chains of arbitrary length then $h(P)=\infty$. 
A subset $S$  of $P$ is called an \emph{antichain} if for all $x\neq y$ in $S$, $x$ and $y$ are not comparable. 
We define the \emph{width} of $P$, $w(P)$, to be the size of the largest antichain in $P$ if such a set exists; otherwise, $w(P) = \infty$.

Given posets $P$ and $Q$, the direct product poset $P \times Q$ has elements $\anglebracket{p,q}$ for all $p \in P$ and $q \in Q$ with the order relation $\anglebracket{p,q} \leq_{P \times Q} \anglebracket{p',q'}$ if $p \leq_P p'$ and $q \leq_Q q'$. 
In particular, for $k \in  \N_{\geq 1}$ we can consider the the $k$-product poset $P^k : = P \times \cdots \times P = \bigtimes_{i=1}^k P$ which has elements $\mathbf{p} = \anglebracket{p^1, \cdots, p^k}$. We call $p^i$ the $i$-th coordinate of  $\mathbf{p}$.
Several properties of posets are inherited by their product. 
For example, the product of complete posets is complete and the product of two convex subsets is convex 

Following \cite{botnan2020decomposition}, an \emph{interval} $I$ of $P$ is a non-empty convex connected subset of $P$. 
Given  $p \subset P$ we define its \emph{open initial segment}, $(-\infty,p)$, and \emph{open final  segment}, $(p,\infty)$, by
\begin{equation*}
    (-\infty,p) = \{q \in P \mid  q < p \}, \qquad
    (p,\infty) = \{ q \in P \mid q > p\} 
\end{equation*} 

Analogously we define $(-\infty,p] = \{ q \leq p \mid q \in P\}$ and $[p,\infty)= \{ q \geq p \mid q \in P\}$ and call it the \emph{closed initial (resp. final) segment} of $p$. 
These four classes of sets are called the segments of $P$. 
The segments of $P$ as well as their intersections are intervals of $P$; for example for $p \leq q$
\begin{align*}
    [p,q] & :=  [p,\infty) \cap (-\infty,q], \text { the usual closed interval with ends } p, q \\
    (p,q) & :=  [p,\infty) \cap (-\infty,q), \text { the usual open interval with ends } p, q 
\end{align*}    
For $p,q  \in P$, we use the notation $\intAny{p,q}$ to denote any interval of the form $[p,q], (p,q), (p,q]$, or $[p,q)$. 

In a complete linear poset $T$, any interval is either a segment ($(\infty, q)$, $(\infty, q]$, $(p,\infty)$ or $ [p,\infty)$) or of the form $\intAny{p,q}$ for some $p < q \in T$. 
However that is not the case for other posets. 
The set $\{ x \mid x^2 < 5 \} \subset \Q$ is an interval of $\Q$ that can not be written as an usual interval of the form $\lfloor p,q\rceil$. 
Similarly, the set  $\{\anglebracket{0,0}, \anglebracket{0,1}, \anglebracket{1,0} \}  \subset \N \times \N$ is an interval of $\N \times \N $ but is not of the form $\lfloor p,q\rceil$.

\begin{definition}
    The \emph{order topology} of $P$ is the topology generated by the subbase of open segments of $P$, given by $\{(p,\infty) \mid p \in P\} \cup \{(-\infty, q) \mid q \in Q\}$. 
\end{definition}

For example, the usual topology of $\R$ coincides with its order topology as a poset $(\R,\leq)$. 

\begin{lemma}
    \label{lemma_compact}
    Let $T$ be a complete linear poset. 
    Any interval $[s,t] \subseteq T$ is compact in the order topology of $T$.
\end{lemma}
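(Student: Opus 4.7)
The plan is to adapt the classical Heine--Borel proof for $[a,b] \subseteq \R$ to this order-theoretic setting, using completeness of $T$ to produce suprema and the explicit form of basic open sets in the order topology. Given an open cover $\U = \{U_\alpha\}_\alpha$ of $[s,t]$, I would define
\[ S := \{x \in [s,t] \mid [s,x] \text{ admits a finite subcover from } \U\}. \]
Then $s \in S$ (any $U_\alpha \ni s$ suffices) and $S$ is bounded above by $t$, so completeness of $T$ yields $m := \sup S$ with $s \leq m \leq t$. It then suffices to prove (A) $m \in S$ and (B) $m = t$, which together give a finite subcover of $[s,t]$.

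For (A), pick $U_{\alpha_0} \in \U$ with $U_{\alpha_0} \ni m$, and a basic open neighborhood $V \subseteq U_{\alpha_0}$ of $m$. By definition of the order topology, $V$ is a finite intersection of open rays, hence takes one of the forms $(p,q)$, $(p,\infty)$, $(-\infty,q)$, or $T$, with $p < m < q$ whenever the corresponding endpoint is present. If $V$ has no lower endpoint, then $[s,m] \subseteq V$, so $\{U_{\alpha_0}\}$ alone covers $[s,m]$. Otherwise $p < m$, and the sup property produces $r \in S$ with $p < r \leq m$; either $r = m$ and we are done, or $r < m$ and $[r,m] \subseteq V$, so any finite subcover of $[s,r]$ augmented by $U_{\alpha_0}$ covers $[s,m]$.

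For (B), suppose for contradiction $m < t$. If $m$ has an immediate successor $m^+$ in $T$, then $m^+ \leq t$ by linearity, and any finite subcover of $[s,m]$ together with some $U_\beta \ni m^+$ covers $[s, m^+]$, placing $m^+ \in S$ and contradicting $m^+ > \sup S$. If $m$ has no immediate successor, then linearity provides $r$ with $m < r < \min(q,t)$ (or $m < r < t$ if the neighborhood $V$ of $m$ lacks an upper endpoint), so $r \in V \cap (m,t]$ and a finite subcover of $[s,m]$ augmented by $U_{\alpha_0}$ covers $[s,r]$, again producing $r \in S$ with $r > \sup S$. The main obstacle is precisely that $T$ need not be dense, so basic neighborhoods of $m$ may fail to contain points arbitrarily close to $m$; the clean dichotomy between $m$ having an immediate neighbor and not having one is what lets the argument go through uniformly in a linearly ordered setting.
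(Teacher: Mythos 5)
Your proof is correct and follows essentially the same route as the paper's: define the set of points $r$ for which $[s,r]$ admits a finite subcover, use completeness of $T$ to take its supremum $m$, and derive a contradiction from $m < t$. In fact your write-up is more careful than the paper's — the published proof elides the discrete case (immediate successors), the verification that $m$ itself lies in the set, and the final step showing $t$ is actually covered, all of which you handle explicitly.
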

\begin{proof}
Let $\mathcal{U}$ be a given open cover of $[s,t]$.
Define $B$ to be the set of elements $r \in [s,t]$ such that $[s,r]$ can be covered by a finite number of elements of $\mathcal{U}$. 
The set $B$ is not empty since $s \in B$ and is bounded by $t$. 
Then, because $T$ is a complete poset, $\sup B$ exists. 
If $\sup B < t$ we can find  an interval $ (x,y) \subset U \in \mathcal{U}$ such that $\sup B \in (x,y) \subset [s,t]$. 
Then $y \in B$, but it contradicts the definition of supremum. 
Hence $t \leq \sup B$ so $[s,t]$ is compact.
    \end{proof}

\subsection{Categorical terminology}

We give basic definitions for the categorical terminology used in this paper; see \cite{riehl2017category} for details. 
A \emph{category} $C$ consists of a collection of objects, e.g.~$X,Y,Z \in C$, and for any pair of objects $X,Y$ a collection (possibly empty) of morphisms denoted $C(X,Y)$. 
We often write $f: X  \to Y$ if $f \in C(X,Y)$. We say that the \textit{source} of $f$ is $X$ and its \textit{target} is $Y$. There must be an identity morphism $\id_X \in C(X,X)$ for all objects $X \in C$. Further, there is a composition operation on morphisms $g \circ f \in C(X,Z)$ for $f:X \to Y$ and $g:Y \to Z$ which is associative.
A \emph{subcategory} $S$ of $C$ is a subset of objects and morphisms which is itself a category. 
A subcategory $S \subseteq C$ is \emph{full} if $S(X,Y) = C(X,Y)$ for any objects $X,Y \in S$.

For example, we write $\Vec_\mathbb{F}$ for the category of finite-dimensional vector spaces over a fixed field $\F$  with the morphisms given by linear transformations between vector spaces. 
Another common example is that of the category of sets with set maps, denoted $\Set$. 

We can also obtain a category from any poset $P$. 
This category has an object for every element $p\in P$ and a single morphism $p \to q$ for any pair $p \leq q$ in $P$. 
This category is a \emph{thin} category which means there is at most one morphism for any pair of objects. %

A \emph{functor} $F:C \to D$ between categories $C$ and $D$ is a map which takes objects $X$ of $C$ to objects $F(X)$ in $D$ and morphisms $f$ in $C(X,Y)$ to morphisms $F[f]$ in $D(F(X), F(Y))$. 
These mappings are compatible with the identity morphism and the associative property of the composition.

A special type of functor is a \emph{diagram} on $C$, a functor to $C$ from a category $J$ with a (very) small set of objects and a (very) small set of morphisms.  
The category $J$ is called the index category of the diagram. 
Visually, a diagram is a directed graph with  objects of $C$ as vertices  morphisms as edges. 
See Fig.~\ref{fig_diagram}.
We say that the diagram commutes if any two paths of composable arrows in the directed graph with common source and target have the same composition.
For instance, the diagram of Fig.~\ref{fig_diagram} commutes if $h \circ f = i \circ g$.

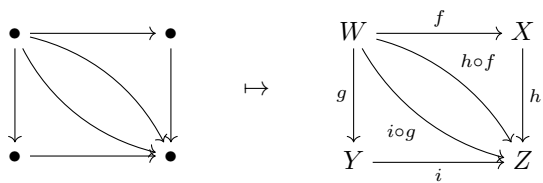
\begin{figure}[h]
    \centering
    \[\begin{tikzcd}[cramped]
	{\bullet} && {\bullet} \\
	\\
	{\bullet} && {\bullet}
	\arrow[" ", from=1-1, to=1-3]
	\arrow[" "', from=1-1, to=3-1]
	\arrow[" ", from=1-3, to=3-3]
	\arrow[" "', from=3-1, to=3-3]
    \arrow[" ", curve={height=-12pt},  from=1-1, to=3-3]
    \arrow[" ", curve={height=12pt},  from=1-1, to=3-3]   
\end{tikzcd}
\qquad \mapsto \qquad 
    \begin{tikzcd}[cramped]
	{W} && {X} \\
	\\
	{Y} && {Z}
	\arrow["f", from=1-1, to=1-3]
	\arrow["g"', from=1-1, to=3-1]
	\arrow["h", from=1-3, to=3-3]
	\arrow["i"', from=3-1, to=3-3]
    \arrow[" h \circ f ", curve={height=-12pt},  from=1-1, to=3-3]
    \arrow[" i \circ g"', curve={height=12pt},  from=1-1, to=3-3]
\end{tikzcd}\]

    \caption{A index category $J$ and square diagram on a category $C$ with objects $W,X,Y,Z$ and morphisms $f,g,h,i$ }
    \label{fig_diagram}
\end{figure}

In particular, a finite poset category is a small category. Thus for for a finite poset $P$ and a category $C$, a functor $F : P \to C$ is a diagram of $C$. Let us add a useful, immediate lemma that applies for the poset categories.

 \begin{lemma}
     [Thin lemma] In a thin category all diagrams commute given there is at most one morphism between any two objects.
     \label{lemma_thin}
 \end{lemma}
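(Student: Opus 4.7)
The plan is to apply the defining property of a thin category directly to the definition of commutativity given just above the lemma. A diagram commutes when, for every pair of objects $X, Y$ appearing in the diagram, any two paths of composable arrows from $X$ to $Y$ yield the same composition. In a thin category, however, $|C(X,Y)| \leq 1$ for every pair of objects $X,Y$, so there is \emph{nothing to check}: any two morphisms from $X$ to $Y$ are automatically equal.

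Concretely, I would let $F \colon J \to C$ be an arbitrary diagram in the thin category $C$ and fix two paths of composable arrows in $J$ with common source $j_0$ and common target $j_n$. Applying $F$ and composing yields two morphisms $f, g \in C(F(j_0), F(j_n))$. Since $C$ is thin, the hom-set $C(F(j_0), F(j_n))$ contains at most one element, so $f = g$. As the two paths were arbitrary, the diagram commutes.

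There is really no obstacle here: the lemma is an immediate consequence of unpacking the two definitions (thin category and commuting diagram). The only thing worth emphasizing in the write-up is that the statement covers all diagrams uniformly, including the finite poset diagrams $F \colon P \to C$ discussed just above, which is the use case the authors will invoke later.
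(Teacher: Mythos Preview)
Your argument is correct and is exactly the standard unpacking of the definitions. The paper itself does not supply a proof for this lemma at all; it is stated as self-evident and immediately followed by its corollary, so your write-up is already more detailed than what the authors provide.
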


As a consequence of Lem.~\ref{lemma_thin} and the definition of functors we obtain the following corollary.

\begin{corollary}
\label{coro_split}
    For a finite poset $P$, a category $C$ ,the diagram  $F : P \to C$ and  $p \leq r \leq q$ in $P$ we have that
\[
    F[p \leq q] = F[(r \leq q) \circ (p \leq r)]= F[r \leq q] \circ F[p \leq r].
\]
\end{corollary}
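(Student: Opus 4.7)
The plan is to chain together two facts: that the poset category $P$ is thin (so all morphism-level compositions collapse), and that $F$ is a functor (so composition is preserved).

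First, I would observe that given $p \leq r \leq q$ in $P$, the poset category has three relevant morphisms: $p \leq r$, $r \leq q$, and $p \leq q$. The composition $(r \leq q) \circ (p \leq r)$ is a morphism from $p$ to $q$ in $P$. Since $P$ viewed as a category is thin, Lemma \ref{lemma_thin} (or equivalently, the definition of the poset category having a unique arrow for each comparable pair) forces
\[
(r \leq q) \circ (p \leq r) = (p \leq q).
\]

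Next I would apply $F$ to this equality. Since $F$ is a functor, it respects composition, so
\[
F[(r \leq q) \circ (p \leq r)] = F[r \leq q] \circ F[p \leq r].
\]
Combining the two equalities yields the chain $F[p \leq q] = F[(r \leq q) \circ (p \leq r)] = F[r \leq q] \circ F[p \leq r]$, which is the claim.

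There is no real obstacle here; the statement is a standard consequence of thinness plus functoriality, and the only thing to be careful about is distinguishing the composition of order relations inside $P$ (which is unique by thinness) from the composition of the image morphisms inside $C$ (which need not be the unique morphism of any kind, but is determined by the functor axioms). Note also that the hypothesis of finiteness of $P$ is not actually used in the argument; it is inherited from the setup but is irrelevant to this particular corollary.
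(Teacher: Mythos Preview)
Your proposal is correct and follows exactly the approach indicated by the paper, which states the corollary as an immediate consequence of the Thin Lemma (Lem.~\ref{lemma_thin}) together with the definition of a functor, without giving further detail. Your observation that the finiteness of $P$ is not actually needed is also accurate.
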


There are many special cases of morphisms to consider.
A morphism $f:X \to Y$ is called a \textit{monomorphism} if for all objects $Z\in C$ and morphisms $g,h: Z \to X$ such that $f \circ h = f \circ g$,  then $g =h$. 
Similarly, $f$ is called an \emph{epimorphism} if for all objects $Z$ and morphisms $h,g: Y \to Z$ such that $ h \circ  g = g \circ f$, then $g =h$. 
Finally, $f$ is an \emph{isomorphism} if there is $g: Y \to X$ such that $f \circ g = id_Y $ and $g \circ f = id_X $. 
Note that if $f$ is an isomorphism then it is monomorphism and epimorphism as well.
For example, in $\Vec_\F$ the monomorphisms coincide with injective maps and epimorphism coincide with surjective maps. 
A map is bijective if it is injective and surjective. 
In this category, bijective maps coincide with the isomorphisms.

Similarly to morphisms, we have different types of functors. 
Let $C$ and $D$ be locally small categories. A functor $F: C \to \D$ is \emph{faithful} if the mapping $C(X,Y) \to D(F(X), F(Y))$ is injective and is \emph{full} if such mapping is surjective.
We say $C$ is a a \emph{concrete category} if it is equipped with a faithful functor $U: C \to \Set$, also called a forgetful functor.
If $C$ is a concrete category we can think of its objects as sets with additional structure and of its morphisms as structure-preserving functions. 
We will call a concrete category whose underlying sets are finite a \emph{FC category}.

Take $F,G: C \to D$ to be functors. A \emph{natural transformation} $\phi: F \Rightarrow G$ consists of a family of morphisms indexed by the objects of $C$, $\{\phi_X: F(X) \to G(X) \mid X \in C\}$, such that for $f: X \to Y$ the diagram 
\[\begin{tikzcd}[cramped]
	{F(X)} && {G(X)} \\
	\\
	{F(Y)} && {G(Y)}
	\arrow["{\phi_X}", from=1-1, to=1-3]
	\arrow["{F[f]}"', from=1-1, to=3-1]
	\arrow["{G[f]}", from=1-3, to=3-3]
	\arrow["{\phi_Y}"', from=3-1, to=3-3]
\end{tikzcd}\]
commutes.
If $\phi_X$ is an isomorphism for all objects $X$,  we say that $\phi$ is a natural isomorphism. 
For fixed $C$ and $D$, there is a \emph{functor category}, written $D^C$, with objects given by functors and morphisms given by natural transformations. 
We write $\End(C) = C^C$ for the \emph{endomorphisms} of a category $C$. 

We have the following corollary of the Thin Lemma (Lem.~\ref{lemma_thin}).

\begin{corollary}
     Fix a poset $P$, any category $C$ and functors $F,G: C \to P$.
     There is a natural transformation $\phi: F \Rightarrow G $  if and only if $F(X) \leq G(X)$ for all $X \in C$.
We write $ F \leq G $ if such a $\phi$ exists.
\end{corollary}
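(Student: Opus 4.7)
The plan is to unpack the definition of morphism in the poset category $P$ and then invoke the Thin Lemma (Lemma~\ref{lemma_thin}) to handle naturality essentially for free.

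For the forward direction, I would assume a natural transformation $\phi\colon F \Rightarrow G$ exists. By definition, for every object $X \in C$ this gives a morphism $\phi_X\colon F(X) \to G(X)$ in $P$. Because $P$ is the poset category, a morphism $p \to q$ exists precisely when $p \leq q$, so the existence of $\phi_X$ immediately yields $F(X) \leq G(X)$ for every $X$. This direction is essentially just a translation of the definitions.

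For the reverse direction, I would assume $F(X) \leq G(X)$ for all $X \in C$ and construct the natural transformation. For each $X$, define $\phi_X\colon F(X) \to G(X)$ to be the unique morphism in $P$ corresponding to the relation $F(X) \leq G(X)$ (uniqueness is automatic since $P$ is thin). To verify naturality, I need to check that for each morphism $f\colon X \to Y$ in $C$, the square
\[\begin{tikzcd}[cramped]
	{F(X)} && {G(X)} \\
	\\
	{F(Y)} && {G(Y)}
	\arrow["{\phi_X}", from=1-1, to=1-3]
	\arrow["{F[f]}"', from=1-1, to=3-1]
	\arrow["{G[f]}", from=1-3, to=3-3]
	\arrow["{\phi_Y}"', from=3-1, to=3-3]
\end{tikzcd}\]
commutes in $P$. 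But since $P$ is thin, this commutativity is immediate from the Thin Lemma (Lemma~\ref{lemma_thin}): any two parallel morphisms in a thin category are equal, so $\phi_Y \circ F[f] = G[f] \circ \phi_X$ holds trivially.

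There is no real obstacle here; the only thing to be careful about is ensuring that the two composites in the naturality square are actually well-defined parallel morphisms $F(X) \to G(Y)$ in $P$, which follows since we have $F(X) \leq F(Y) \leq G(Y)$ and $F(X) \leq G(X) \leq G(Y)$ from functoriality together with the hypothesis. Once this is observed, the Thin Lemma finishes the argument.
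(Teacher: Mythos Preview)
Your proposal is correct and matches the paper's approach exactly: the paper states this result as an immediate corollary of the Thin Lemma (Lemma~\ref{lemma_thin}) without providing a separate proof, and your argument is precisely the expected unpacking of that claim. Your care in noting that both composites $F(X) \to G(Y)$ exist (via $F(X) \leq F(Y) \leq G(Y)$ and $F(X) \leq G(X) \leq G(Y)$) before invoking thinness is a detail the paper leaves entirely implicit.
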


Take a finite poset $P$ and let $P^{\text{op}}$ denote its opposite category. The objects of  $P^{\text{op}}$ are the elements $p\in P$ and for  $p \leq q$ in $P$  there is a single morphism $q \to p$. Consider a diagram $F: P^{\text{op}} \to C$ on some category $C$. 

    Take $N$ and object of $C$ and define the functor $\Delta_N: P^{\text{op}} \to C$ given by $\Delta_N(p) = N$ for all $p \in P$. A \textit{cone} from $N$ to $F$ is a natural transformation $\phi: \Delta_N \Rightarrow F$. We say that $N$ is the \textit{vertex} of the cone, as shown in the following diagram:
    \[\begin{tikzcd}[cramped]
        & N &\\
        & L  &\\ \\
        {F(q)} && {F(p)}.
        \arrow["u"', dashed, from=1-2, to=2-2]
        \arrow["{\phi_q}"', curve={height=3pt}, from=1-2, to=4-1]
        \arrow["{\phi_p}", curve={height=-3pt},from=1-2, to=4-3]
        \arrow["{\psi_q}", from=2-2, to=4-1]
        \arrow["{\psi_p}"', from=2-2, to=4-3]
        \arrow[from=4-1, to=4-3]
    \end{tikzcd}\]
    
   Let $L \in C$. We say that $L$ is the \textit{inverse limit} of $F$ if there is \mbox{$\psi: \Delta_L\Rightarrow F$} a cone from $L$ to $F$ and for any other cone $\phi: \Delta_N \Rightarrow F$ there is a unique morphism $u$ such that for all $p \in P$, $\psi_p \circ u = \phi_p $. We denote it by $L = \displaystyle \lim_{\longleftarrow} (F(p) \mid p \in P)$. 
    The defining property of the inverse limit is called a \textit{universal property} because any other cones factor through the inverse limit; i.e.~$\psi \circ u = \phi$.

\subsection{Flows on a poset }
\label{ssec:FlowsOnPosets}

Since our goal is to define the interleaving distance for functor categories $C^P$, we now discuss how to define a flow on a poset, following \cite{Bubenik2015} and \cite{Flow}.
A \emph{translation} on $P$,  $\mathcal{S} \in \End(P)$,  is an endomorphism of poset $P$ such that $\Id \leq \mathcal{S}$  where  $\Id:P \to P$ is the identity endomorphism.
Let $\Trans(P)$ denote the set of translations of $P$, where we view $\Trans(P)$ as a full subcategory of $\End(P)$. 
\begin{definition}
\label{defn:flow}
A \emph{flow} $\T$ on a poset $P$ consists of a functor 
$\T: ([0,\infty), \leq) \to \Trans(P)$ 
with $\T(\e) = \T_\e$ such that 
$\T_\e \circ \T_{\e'}  \leq \T_{\e + \e'}$. 
We say that $\T$ is \emph{commutative} if additionally 
$ \T_{\e'} \circ \T_{\e} =  \T_\e \circ \T_{\e'}$. 
We call $\T$ a \emph{strict flow} if $\T_0 = \Id $  and 
$\T_{\e} \circ \T_{\e'} =  \T_{\e + \e'} $. 
Explicitly, a \emph{flow} is a function $\T: ([0,\infty), \leq) \to \End(P)$ satisfying four conditions:

    \begin{enumerate}[(1)]
        \item For all $0 \leq \e $ and for all $p \leq q \in P$, $\T_\e p \leq \T_\e q $.
        \item For all $0 \leq \delta < \e $ and for all $p \in P$, $\T_\delta p \leq \T_\e p $.
        \item For all $0 \leq \e $ and for all $p \in P$, $p \leq \T_\e p $.
        \item For all $0 \leq \e ,\e'$ and for all $p \in P$, \mbox{$\T_\e \circ \T_{\e'} p \leq \T_{\e + \e'} p $} .
    \end{enumerate}
\end{definition}

We are going to be particularly interested in the translations attained by finitely many compositions of morphisms from some input $\T$, defined as follows. 

\begin{definition}
\label{def:flowsCategory}
The full subcategory $\Flows_{\T} \subseteq \Trans(P) \subseteq \End(P)$ is given by the translations of $P$ that are finite compositions of morphisms $\T_\e$,
$$
\Flows_{\T} = \{ \T_{\e_1} \circ \cdots \circ \T_{\e_\ell} \mid \ell \geq 0, \e_i \geq 0 \}.
$$
\end{definition}
If the flow  is commutative (i.e.~$ \T_{\e'} \circ \T_{\e} =  \T_\e \circ \T_{\e'}$), the composition in $\Flows_{\T}$ is commutative (i.e., for $\mathcal{S}, \mathcal{S}' \in \Flows_{\T}$ we have $\mathcal{S} \circ \mathcal{S}' = \mathcal{S}' \circ \mathcal{S}$).

\begin{definition}
\label{def:Archimedean}
We say that a flow $\T$ on poset $P$ is an \emph{Archimedean flow} if for all $p < q$ in $P$ there is $\delta$ such that $\T_\delta \; p \geq q $.
\end{definition}
For example, define the flow 
$\T: [0,\infty) \to \Trans(\R)$ by $T_\e(x) =  x + \e$. 
For $x < y$, we can check that $\T_{y-x}(x) = y$, so $\T$ is Archimedean.
We have the following result in the special case that $\T$ is a flow on $\R$.

\begin{lemma}
    A flow  $\T$ on $\R$  is Archimedean if and only if for all $x \in \R$ there is $M_x >0$ such that if $\e \geq M_x$, then   $\T_\e(x) > x$ .
\end{lemma}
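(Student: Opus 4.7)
The statement is an iff, so I would prove the two directions separately, with the backward direction being the substantive one.

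\textbf{Forward direction.} Assume $\T$ is Archimedean. Given $x \in \R$, I apply the Archimedean property to the pair $x < x+1$ to obtain some $\delta > 0$ with $\T_\delta(x) \geq x+1$. Setting $M_x := \delta > 0$, monotonicity of $\e \mapsto \T_\e(x)$ (condition~(2) of Definition~\ref{defn:flow}) yields $\T_\e(x) \geq \T_{M_x}(x) \geq x+1 > x$ for every $\e \geq M_x$.

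\textbf{Backward direction.} Assume the pointwise condition and suppose, for contradiction, that $\T$ is not Archimedean: there exist $x < y$ in $\R$ with $\T_\e(x) < y$ for all $\e \geq 0$. Let $L := \sup_{\e \geq 0} \T_\e(x)$, which is finite (bounded above by $y$) and satisfies $L > x$ since the hypothesis at $x$ forces $\T_{M_x}(x) > x$. Applying the hypothesis at $L$ produces $M_L > 0$ with $\T_{M_L}(L) > L$; set $\beta := \T_{M_L}(L) - L > 0$.

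The plan is then to transfer this strict jump at $L$ back into the orbit of $x$ via the flow composition axiom. The easy subcase is when $L$ is attained in the orbit: if $L = \T_{\e_0}(x)$ for some $\e_0 \geq 0$, then condition~(4) of Definition~\ref{defn:flow} gives
\begin{equation*}
L + \beta = \T_{M_L}(L) = \T_{M_L}(\T_{\e_0}(x)) \leq \T_{M_L + \e_0}(x) \leq L,
\end{equation*}
which is absurd. The main obstacle is the remaining subcase, where the orbit only accumulates at $L$ from below. Here I would use monotonicity (condition~(1)) together with the fact that every $z < L$ is dominated by some orbit element $\T_{\e'}(x) \in (z, L]$ (by definition of sup) to conclude $\T_{M_L}(z) \leq \T_{M_L}(\T_{\e'}(x)) \leq \T_{M_L + \e'}(x) \leq L$ for all $z < L$. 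The delicate step is then forcing a contradiction with $\T_{M_L}(L) = L + \beta$: I would approach this by iterating the hypothesis at successive points just below $L$ to build a sequence of orbit values that, under the flow composition bound $\T_{M_L} \circ \T_\e \leq \T_{M_L + \e}$, must eventually strictly exceed $L$, violating $L = \sup_{\e \geq 0} \T_\e(x)$.
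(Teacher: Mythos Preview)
Your forward direction is fine and arguably cleaner than the paper's contrapositive.

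The backward direction has a genuine gap in the subcase where $L$ is not attained. You have correctly shown that $\T_{M_L}(z) \leq L$ for every $z < L$, while $\T_{M_L}(L) = L + \beta > L$. But without any continuity assumption on $\T_{M_L}$ this discrepancy is \emph{not} a contradiction, and your proposed ``iteration'' cannot manufacture one. Applying the hypothesis at a point $z_0 < L$ only yields some $z_1 = \T_{M_{z_0}}(z_0) > z_0$, and the flow composition bound $\T_{M_{z_0}} \circ \T_{\e'} \leq \T_{M_{z_0} + \e'}$ goes the \emph{wrong way}: it forces $z_1 \leq L$, not $z_1 > L$. Repeating this gives an increasing sequence $z_0 < z_1 < z_2 < \cdots$ that is perfectly happy to converge to $L$ from below without ever exceeding it. Nothing in your setup controls how \emph{large} each jump is, so nothing prevents the orbit from asymptoting.

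The missing idea, and the core of the paper's proof, is superadditivity of the displacement. Set $\phi(\e,x) = \T_\e(x) - x \geq 0$; the paper shows, via conditions (1) and (4), that $\phi(\e,x) + \phi(\e',x) \leq \phi(\e+\e',x)$, whence $n\,\phi(\e,x) \leq \phi(n\e,x)$ for all $n$. Now the hypothesis is applied \emph{at $x$ itself}, not at $L$: picking any $\e \geq M_x$ gives $\phi(\e,x) > 0$, so $\T_{n\e}(x) \geq x + n\,\phi(\e,x) \to \infty$, contradicting $\T_{n\e}(x) < y$. This bypasses the attained/not-attained dichotomy entirely and gives the uniform linear growth that your iteration scheme lacks.
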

\begin{proof}
    The "only if" statement is proved by contrapositive.
    Take a flow with an $x$ such that for all $M > 0$ and $\e \leq M$, $\T_\e(x) \leq x$. Then, for all $M > 0$, $ x \leq \T_M(x) \leq x$. Such flow fails to be Archimedean given that for $ y > x $ there is no $\e$ with $\T_\e(x) \geq y$.
    
    For the "if" statement take $\T$ to be a flow on $\R$ with the condition  that for all $x \in \R$ there is $M_x >0$ such that if $\e \geq M_x$, then   $\T_\e(x) > x$ . 
    We can write $\T$ as a function $\T: [0,\infty) \times \R \to \R$. 
    Using Condition 3), define $\phi:\R_{\geq 0} \times \R \to \R_{\geq 0}$ by $\phi(\e,x) = \T_\e(x)-x$.
    By Conditions 1) and 2), $\phi$ is non-decreasing in both entries. 
    Further, because of our assumptions on $\T$, $\phi(\e,x) >0$ for $\e \geq M_x$
    Then, by condition 4)
    \begin{align*}
        x + \phi(\e',x) + \phi(\e,x) 
        &\leq x + \phi(\e',x) + \phi(\e,x + \phi(\e',x)) \qquad  \text{(Non decr in 2nd entry)} \\
        & = \T_\e \circ \T_{\e'} (x) \\
        &\leq\T_{\e+\e'} (x)  \qquad \text{(Condition (4))}\\
        & = x + \phi(\e + \e' ,x).
    \end{align*}
    Thus, 
    $\phi(\cdot,x)$ is superadditive, meaning $\phi(\e',x) + \phi(\e,x) \leq \phi(\e + \e' ,x)$. 
    We prove by induction that for $n \in \N$, $n \phi(\e,x) \leq \phi(n \e,x) $.

    By way of contradiction, suppose that $\T$ is not Archimedean so there exists $x < y$ in $\R$ such that $\T_\e x < y$ for all $\e > 0$. 
        Hence, for a fixed but arbitrary $\e \geq M_x$,
     $$y  > \T_{n\e}(x) 
     = x + \phi(n\e,x) 
     \geq x + n\phi(\e,x). $$
     Dividing by $n$ and taking the limit when $n$ goes to infinity we obtain that $0 \geq \phi(\e,x)$. 
    However, this contradicts that $\phi(\e,x)>0$, finishing the proof. 
\end{proof}

\begin{definition}
    Let $T$ be linear poset. 
    A translation $\mathcal{S}$ on $T^k$ is called \emph{line-preserving} if $\mathcal{S}$ satisfies if it satisfies either of the equivalent conditions of the subsequent lemma.
\end{definition}

\begin{lemma}
    \label{lemma:line-pres}
     Let $T$ be linear poset and $\mathcal{S}$ a translation on $T^k$. The following are equivalent:
     \begin{enumerate}[i)]
         \item For each $ 1 \leq i \leq k$ there are translations $\mathcal{S}^i: T \to T$ such that for $\mathbf{p}=\ncoord{p}{k} \in T^k $ we have that $\mathcal{S}(\mathbf{p}) =  \anglebracket{\mathcal{S}^1(p^1), ..., \mathcal{S}^k(p^k)}$.
         \item For any $i$ between $1$ and $k$, if $\mathbf{p}, \mathbf{q}$ in $T^k$ are such that all their coordinates are the same except for $i$-th coordinate, then the coordinates of $\mathcal{S}(\mathbf{p})$ and $\mathcal{S}(\mathbf{q})$ are the same except, maybe, for the $i$-th coordinate.
     \end{enumerate}
\end{lemma}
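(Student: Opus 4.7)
The direction (i) $\Rightarrow$ (ii) is immediate, while (ii) $\Rightarrow$ (i) requires us to construct the coordinate-wise translations $\mathcal{S}^i$ out of the ``coordinate locality'' of $\mathcal{S}$ asserted in (ii). I would fix a basepoint in $T^k$, define each $\mathcal{S}^i$ by reading off the $i$-th coordinate of $\mathcal{S}$ applied to that basepoint with its $i$-th entry varied, and then use (ii) as a ``one coordinate at a time'' swap argument to show $\mathcal{S}^i$ captures the action of $\mathcal{S}$ on every point of $T^k$. We may assume $T \neq \emptyset$ since otherwise $T^k = \emptyset$ and both statements hold vacuously.

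\textbf{(i) $\Rightarrow$ (ii).} Suppose $\mathcal{S}(\mathbf{p}) = \langle \mathcal{S}^1(p^1),\ldots,\mathcal{S}^k(p^k)\rangle$ for some translations $\mathcal{S}^i$ on $T$. If $\mathbf{p},\mathbf{q}\in T^k$ agree on all coordinates except the $i$-th, then for $j \neq i$ we have $\mathcal{S}(\mathbf{p})^j = \mathcal{S}^j(p^j) = \mathcal{S}^j(q^j) = \mathcal{S}(\mathbf{q})^j$, so $\mathcal{S}(\mathbf{p})$ and $\mathcal{S}(\mathbf{q})$ agree outside the $i$-th coordinate.

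\textbf{(ii) $\Rightarrow$ (i).} Fix any basepoint $\mathbf{c}=\langle c^1,\ldots,c^k\rangle\in T^k$, and for $t\in T$ and $1\le i\le k$ let $\mathbf{c}[t/i]\in T^k$ denote the tuple obtained from $\mathbf{c}$ by replacing the $i$-th coordinate with $t$. Define
\[
\mathcal{S}^i(t) \;:=\; \mathcal{S}\bigl(\mathbf{c}[t/i]\bigr)^i.
\]
The key step is showing that for every $\mathbf{p}=\langle p^1,\ldots,p^k\rangle\in T^k$ and every $i$, $\mathcal{S}(\mathbf{p})^i = \mathcal{S}^i(p^i)$. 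I would do this by interpolating from $\mathbf{c}[p^i/i]$ to $\mathbf{p}$ one coordinate at a time: list the indices $j\in\{1,\ldots,k\}\setminus\{i\}$ in some order, and form a sequence of points where consecutive terms differ only in a single coordinate $j\neq i$. By condition (ii), each such step leaves the $i$-th coordinate of $\mathcal{S}(\cdot)$ unchanged, so $\mathcal{S}(\mathbf{p})^i=\mathcal{S}(\mathbf{c}[p^i/i])^i=\mathcal{S}^i(p^i)$. Applying this identity for every $i$ yields $\mathcal{S}(\mathbf{p})=\langle\mathcal{S}^1(p^1),\ldots,\mathcal{S}^k(p^k)\rangle$.

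\textbf{Each $\mathcal{S}^i$ is a translation.} For monotonicity, if $s\le t$ in $T$, then $\mathbf{c}[s/i]\le \mathbf{c}[t/i]$ in $T^k$, so the fact that $\mathcal{S}$ is order-preserving and the preceding formula give $\mathcal{S}^i(s)=\mathcal{S}(\mathbf{c}[s/i])^i \le \mathcal{S}(\mathbf{c}[t/i])^i = \mathcal{S}^i(t)$. For $\mathrm{Id}\le \mathcal{S}^i$, since $\mathcal{S}$ is a translation on $T^k$ we have $\mathbf{c}[t/i] \le \mathcal{S}(\mathbf{c}[t/i])$ in $T^k$, whose $i$-th coordinate comparison reads $t \le \mathcal{S}^i(t)$. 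The only mild obstacle is the ``one coordinate at a time'' lemma in the main step; this is a short but genuinely necessary consequence of condition (ii), and is where the linear-order hypothesis on $T$ plays no role at all (the statement in fact only needs $T^k$ to be a product poset, though the definition uses linearity).
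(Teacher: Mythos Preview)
Your proof is correct and uses the same core idea as the paper: a ``change one coordinate at a time'' argument to show that the $i$-th coordinate of $\mathcal{S}(\mathbf{p})$ depends only on $p^i$. The paper organizes this as a reverse induction on $|U|=|\{i\mid p^i=q^i\}|$, introducing an intermediate point $\mathbf{r}$ to increase $|U|$, while you fix a basepoint $\mathbf{c}$ and interpolate linearly from $\mathbf{c}[p^i/i]$ to $\mathbf{p}$; both are equivalent bookkeepings of the same step. Two points in your favor: you explicitly verify that each $\mathcal{S}^i$ is a translation (order-preserving with $\mathrm{Id}\le\mathcal{S}^i$), which the paper's proof omits, and your observation that linearity of $T$ is never used is correct.
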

\begin{proof}
The statement i) $\implies$ ii) is straightforward. 

For the reverse implication assume ii). Take a fixed $j$ between $1$ and $k$. Let us prove that for $ \mathbf{p},  \mathbf{q}$ in $T^k$ such that $p^j =q^j$ we have that $(\mathcal{S}(\mathbf{p}))^j =  (\mathcal{S}(\mathbf{q}))^j$.
Then we can define $S^j(t) $ to be this common values for any $\mathbf{p} \in T^k$ with $p^j = t$. 

Take different $ \mathbf{p},  \mathbf{q}$ satisfying that condition in $T^k$. Consider the non-empty set $U = \{ i \mid p^i = q^i\}$. Let us proceed by reverse induction. If $|U| = k-1$, by assumption we obtain then  $\mathcal{S}(\mathbf{p})$ and $\mathcal{S}(\mathbf{q})$ are the same except, maybe, for the coordinate not in $U$. In particular,  $(\mathcal{S}(\mathbf{p}))^j =  (\mathcal{S}(\mathbf{q}))^j$. Assume that if $ k-1 \geq |U| \geq \ell$ it implies that $(\mathcal{S}(\mathbf{p}))^j =  (\mathcal{S}(\mathbf{q}))^j$. Suppose that $|U| = \ell - 1$. For fixed $j',j'' \notin U $ choose $\mathbf{r}$ with $r^i = p^i$ if $i \in U$,  $r^{j'} = p^{j'}$ and $r^{j''} = q^{j''}$. Then, the sets $\{ i \mid p^i = r^i\}$ and $\{ i \mid q^i = r^i\}$ have a cardinality strictly greater than $\ell - 1$, hence $$(\mathcal{S}(\mathbf{p}))^j =  (\mathcal{S}(\mathbf{r}))^j = (\mathcal{S}(\mathbf{q}))^j.$$
\end{proof}

Consider the example map $\T: [0,\infty) \to \Trans(\N^2)$ given by 
\begin{equation}
\label{eq:ex-not_line_preserving}
T_\e(\anglebracket{m,n}) =  \anglebracket{m,n+\floor{\e} m}. 
\end{equation}
Let us check that $\T$ is a flow checking each condition from Defn.~\ref{defn:flow}.
\begin{enumerate}[1)]
    \item Take $0 \leq \e$ and $\anglebracket{m,n} \leq \anglebracket{m' ,n' }  $. Then $\anglebracket{m,n+\floor{\e} m} \leq \anglebracket{m',n'+\floor{\e} m'} $.
    \item Take $0 \leq \delta < \e$ and any $\anglebracket{m,n} $. Then $\anglebracket{m,n+\floor{\delta} m} \leq \anglebracket{m,n+\floor{\e} m} $.
    \item Take $0 < \e$ and any $\anglebracket{m,n} $. Then $\anglebracket{m,n} \leq \anglebracket{m,n+\floor{\e} m} $.
    \item Take $0 < \e, \e' $ and any $\anglebracket{m,n} $. Then $\anglebracket{m,n + \floor{\e}m + \floor{\e'}m} \leq \anglebracket{m,n+\floor{\e + \e'} m} $.
\end{enumerate}
By Lemma \ref{lemma:line-pres}, the translation $\T_\e$ of Eqn.~\eqref{eq:ex-not_line_preserving} for $\e \geq 1$ is not line-preserving given that $n + \floor{\e}m$ is not independent of $m$ in that case.
    
Note that a line preserving flow is not necessarily injective. 
For example, consider $\T: [0,\infty) \to \Trans(\R^2)$ given by $T_0(\anglebracket{m,n}) = \anglebracket{m,n}$ and $T_\e(\anglebracket{m,n}) =  \anglebracket{m+ \e,\ceil{n} + \floor{\e}}$ for $\e >0$.  This is line preserving because the function $m + \epsilon$ is independent of $n$ and the function $\ceil{n} + \floor{\e}$ is independent of $m$, but is not injective because $T_\e(\anglebracket{1,1.5}) = T_\e(\anglebracket{1,2})$.

\subsection{Generalized persistence modules}
\label{ssec:genPersMod}

For a poset $P$ and a category $C$ , functors $F: P \to C$ are called \emph{generalized persistence modules} in $C$ over $P$ \citep{Bubenik2015}. 
The collection of these functors is itself a (thin) category with natural transformations as morphisms, denoted $C^P$. 

The terminology arises from the following standard construction in persistence. 
Let $ \emptyset =K_0 \subseteq K_1 \subseteq \cdots K_n = K$ be a filtration of a simplicial complex $K$. 
Denote by $H_k(-)$ the $k$th homology with coefficients in some field $\F$. Define the functor: $F: \{0, \ldots, n\} \to \Vec_\F$ given by $F(i) := H_k(K_i)$ and $F[i \leq j] := H_k(K_i) \to H_k(K_j)$ the map induced by $K_i \hookrightarrow K_j$. 
Then, $F$ is a generalized persistence module in  $\Vec_\F$ over $ \{0, \ldots, n\} $.
A module $F: P \to \Vec_\F$ is called a \emph{$q$-tame module} if for all $r < p \in P$ the corresponding maps $F[r \leq p]$  have finite rank.\footnote{
We note that the term $q$-tame is common in the literature (e.g.~\cite[Def 1.12]{Oudot_pers}), it is not related to any fixed choice of $q \in P$.}

For another example, consider the same filtration from before. 
Define the functor $G:  \{0, \ldots, n\}   \to \Set$ given by $G(i) := \pi_0(K_i)$ the set of connected components of $K_i$ with morphisms $G(i) \to G(j)$ induced by inclusion. 
Such module is called a \emph{merge tree} since it is the categorical representation of the eponymous topological construction that tracks the connectivity of $K$ over the filtration.

\subsubsection{Critical values of a persistence module}
\label{ssec:criticalVals}

 \begin{definition}
   Let $F\in C^P$ be a generalized persistence module and let $I \subseteq P$ be an interval.
   We say that $F$ is \emph{constant on} $I$, or that $I$ is a constant interval of $F$ if for all $p \leq q \in I$ , $F[p \leq q]$ is an isomorphism. 
\end{definition}

\begin{definition}
    Let $T$ be a linear poset and fix $t \in T$. A subset $I \subseteq T$ is called an \emph{open interval around} $t$ if:
    \begin{itemize}
        \item The set $I$ is equal to $(x,y)$, $(x,\infty)$ or $(-\infty,y)$  for some $x < t < y \in T$.
        \item Whenever $t$ has an upper bound in $T$, there exists $u \in I$ with $t < u$.
        \item Whenever $t$ has a lower bound in $T$, there exists $s \in I$ with $s < t$.
    \end{itemize}
\end{definition}

Thus, $I$ is an open interval in the usual sense and includes $t$, one element above $t$ (when one exists) and one element below $t$ (when one exists).
For example, in the case where  $T = \Z$, the set $I = \{2\} \subset \Z$ is of the form $(1,3)$, however, it is not an open interval around $t=2$ because there is no $u \in I$ for which $2<u$. 
The set $(0,4)$ however would be a open interval around $t=2$.

\begin{definition}
\label{def:tameModule}
    Let $T$ be a complete linear poset and fix a $F \in C^T$. 
    We say that $t \in T$ is a \emph{critical value} of $F$ if for any open interval around $t$, $F$ is not constant on $I$. 
    Otherwise, we say that $t$ is a \emph{regular value} of $F$. 
    We call $F \in C^T$ \emph{tame} if it has a finite number of critical values. 
    In that case, the set of all critical values of $F$, $ \{\tuple{c}{m}\}$, is called the \emph{critical set} of $F$. 
\end{definition}

The notions of critical values and tame modules in categorical terms were introduced by Bubenik and Scott in \cite{categorification} for $P = \R$. 
The definitions presented here apply to any complete linear poset. 
We can consider $\R$ to be complete adding $-\infty$ and $\infty$ as the minimal, respectively maximal of the order.

\begin{lemma}[Critical Value Lemma] 
\label{lemma_critical}
    An interval $I \subseteq T$ does not contain any critical values of $F: T \to C$, if and only if $F$ is constant on $I$.
\end{lemma}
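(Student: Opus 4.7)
My plan is to treat the two directions of the biconditional separately. The substantive content is the ``if'' direction (no critical values in $I$ $\Rightarrow$ $F$ constant on $I$), which I would prove via a compactness argument; the ``only if'' direction should follow from a direct unwinding of the definitions together with the convexity of $I$.

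For the ``if'' direction, fix $p \leq q$ in $I$; the goal is to show $F[p \leq q]$ is an isomorphism. By convexity of $I$ we have $[p,q] \subseteq I$, so every $r \in [p,q]$ is regular and thus admits an open interval $J_r$ around $r$ on which $F$ is constant. The family $\{J_r\}_{r \in [p,q]}$ is an open cover of $[p,q]$, and Lemma~\ref{lemma_compact} gives compactness of $[p,q]$ in the order topology, so I can extract a finite subcover $J_{r_1}, \ldots, J_{r_n}$. Using that $T$ is linear, I would reorder the $r_i$'s so that $r_1 < \cdots < r_n$, prune redundancies so that $p \in J_{r_1}$ and $q \in J_{r_n}$, and argue that consecutive intervals in this sweep must overlap inside $[p,q]$ (otherwise some point would be left uncovered). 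I would then select intermediate points $p = x_0 \leq x_1 \leq \cdots \leq x_n = q$ with $x_i \in J_{r_i} \cap J_{r_{i+1}}$ for $1 \leq i \leq n-1$. Each $F[x_{i-1} \leq x_i]$ is an isomorphism because both endpoints lie in $J_{r_i}$ and $F$ is constant there, so iterating Corollary~\ref{coro_split} expresses $F[p \leq q]$ as a composition of isomorphisms, which is itself an isomorphism.

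For the ``only if'' direction, suppose $F$ is constant on $I$ and fix $t \in I$. The convexity of $I$ together with the constancy assumption lets me produce an open interval around $t$ on which $F$ is constant, thereby certifying $t$ as a regular value. So $I$ contains no critical values.

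The main obstacle will be executing the chain-of-overlaps construction carefully: even with a finite subcover in hand, one must arrange the open order-intervals into a linear sequence whose consecutive members overlap, so that the $x_i$'s can be chosen and Corollary~\ref{coro_split} applied iteratively. Linearity of $T$ is essential here --- it permits the left-to-right sweep in which any gap between consecutive intervals would leave a point of $[p,q]$ uncovered, contradicting the subcover property, and it ensures that composing the pairwise isomorphisms reconstructs $F[p \leq q]$ rather than some incompatible morphism.
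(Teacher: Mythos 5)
Your proof is correct and takes essentially the same approach as the paper: the paper's proof of the substantive direction simply cites the Critical Value Lemma of Bubenik--Scott together with the compactness result of Lemma~\ref{lemma_compact}, and your finite-subcover-plus-chaining argument via Corollary~\ref{coro_split} is exactly the content of that citation, written out in full. The easy direction is handled the same way in both.
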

\begin{proof}
    Let $I$ be an interval that does not contain a critical point. The sufficient condition follows from the Critical Value Lemma (4.4) of \cite[Lem.~4.4]{categorification} join with the Lemma \ref{lemma_compact} to satisfy the proper conditions. 
    The necessary condition comes by the fact that an interval containing a critical point of $F$ can not be a constant interval of $F$.
\end{proof}

\begin{definition}
    Fix a complete linear poset $T$ and a generalized persistence module $F: T^k \to C$. 
    We say that $t \in T$ is a \emph{critical $j$-coordinate} of $F$ if there are some $t_1, \ldots , t_{j-1},t_{j+1}, \ldots, t_k \in T$ such that $(t_1, \ldots , t_{j-1},t,t_{j+1}, \ldots, t_k)$ is a critical value of $F_j:T \to C; s \mapsto F((t_1, \ldots , t_{j-1}, s , t_{j+1}, \ldots t_k ))$
\end{definition}

\begin{definition}
\label{def:cubicalTame}
    We call $F: T^k \to C$ \emph{cubical tame} if it has a finite number of critical coordinates.
\end{definition}

\subsubsection{Distance between persistence modules}
\label{ssec:interleaving}

We next define a distance in $C^P$ using the notion of flows from Sec.~\ref{ssec:FlowsOnPosets}.

\begin{definition}
Assume the poset $P$ is endowed with a commutative flow $\T$. 
We say that $F,G \in C^P$ are $\T_\e$-interleaved if we have a pair of natural transformations 
$$\Phi: F \Rightarrow G\T_{\e} \qquad \Psi: G \Rightarrow F\T_{\e} $$
such that the diagrams 
\begin{equation*}
    \begin{tikzcd}[cramped]
    	{F(p)} && {F(\mathcal{T}_\e\mathcal{T}_\e p)} &&& {F(\mathcal{T}_\e p)} \\
    	& {G(\mathcal{T}_\e p)} &&& {G(p)} && {G(\mathcal{T}_\e\mathcal{T}_\e p)} 
    	\arrow["{F[p \leq \mathcal{T}_\e \mathcal{T}_\e p]}", from=1-1, to=1-3]
    	\arrow["{\Phi_p}"', from=1-1, to=2-2]
    	\arrow["{\Phi_{\mathcal{T}_\e p}}", from=1-6, to=2-7]
    	\arrow["{\Psi_{\mathcal{T}_\e p}}"', from=2-2, to=1-3]
    	\arrow["{\Psi_p}", from=2-5, to=1-6]
    	\arrow["{G[p \leq \mathcal{T}_\e \mathcal{T}_\e p]}"', from=2-5, to=2-7]
    \end{tikzcd}
\end{equation*}
commute.
The interleaving distance is defined to be 
$$  
d_I(F,G) = \inf \{\e \geq 0 \mid F,G \text{ are } \T_\e \text{-interleaved}\}
$$
and we set $d_I(F,G) = \infty$ if there is no $\T_\e$-interleaving.
\end{definition}

\begin{proposition}[{\cite[Thm.~3.15]{Bubenik2015}}]
   The interleaving distance $d_I$ is an extended-pseudometric on $C^P$.
\end{proposition}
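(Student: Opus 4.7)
The plan is to verify the three defining properties of an extended-pseudometric: the diagonal vanishes, symmetry, and the triangle inequality. Non-negativity and the extended-valued nature are automatic from the definition as an infimum over $\e \geq 0$ with the convention that $d_I(F,G) = \infty$ when no interleaving exists.

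For the diagonal, I would exhibit an explicit $\T_\e$-self-interleaving of $F$ for every $\e \geq 0$. Setting $\Phi_p = \Psi_p = F[p \leq \T_\e p]$ (a valid morphism by Condition 3 of Defn.~\ref{defn:flow}) gives families of morphisms that are natural by functoriality of $F$, and both interleaving identities reduce to $F[p \leq \T_\e \T_\e p]$ by the composition law in $F$ applied to the chain $p \leq \T_\e p \leq \T_\e \T_\e p$. This forces $d_I(F,F) \leq \e$ for every $\e \geq 0$, hence $d_I(F,F) = 0$. Symmetry is immediate: swapping $(\Phi,\Psi)$ to $(\Psi,\Phi)$ converts a $\T_\e$-interleaving of $(F,G)$ into one of $(G,F)$, so the sets of admissible $\e$ coincide.

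The main work is the triangle inequality. Fix $\e_1 > d_I(F,G)$ and $\e_2 > d_I(G,H)$ with interleavings $(\Phi_1,\Psi_1)$ and $(\Phi_2,\Psi_2)$ respectively. I would build a candidate $\T_{\e_1+\e_2}$-interleaving of $(F,H)$ by the obvious composites
$$
\alpha_p \;=\; H[\T_{\e_2}\T_{\e_1} p \leq \T_{\e_1+\e_2} p] \circ \Phi_{2,\T_{\e_1} p} \circ \Phi_{1,p},
$$
and the symmetric $\beta_p$ built from $\Psi_2$ then $\Psi_1$, where the trailing structure morphism uses Condition 4 of the flow to pass from $\T_{\e_2}\T_{\e_1} p$ to $\T_{\e_1+\e_2} p$ (and the analogous comparison for $\beta$). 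Naturality of $\alpha$ and $\beta$ reduces to naturality of each $\Phi_i, \Psi_i$ together with functoriality of the modules, pasted along a single square.

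The nontrivial step, and the main obstacle, is verifying the interleaving identity $\beta_{\T_{\e_1+\e_2} p} \circ \alpha_p = F[p \leq \T_{\e_1+\e_2}\T_{\e_1+\e_2} p]$ (and its mirror). The strategy is a four-step diagram chase: first use naturality of $\Psi_2$ to slide the $H$-structure morphism past $\Psi_{2,\T_{\e_1+\e_2} p}$, turning it into a $G$-structure morphism; next apply the $(G,H)$ interleaving identity $\Psi_{2,\T_{\e_2} q} \circ \Phi_{2,q} = G[q \leq \T_{\e_2}\T_{\e_2} q]$ with $q = \T_{\e_1} p$ to collapse the inner $\Psi_2 \circ \Phi_2$ into a $G$-structure morphism; then apply naturality of $\Psi_1$ to push the resulting $G$-structure morphism through $\Psi_1$, yielding an $F$-structure morphism; finally collapse $\Psi_{1,\T_{\e_1} p} \circ \Phi_{1,p}$ via the $(F,G)$ interleaving identity and use functoriality of $F$ to absorb all the remaining $F$-structure morphisms into the single morphism $F[p \leq \T_{\e_1+\e_2}\T_{\e_1+\e_2} p]$ along the chain guaranteed by Condition 4 of the flow. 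The mirror identity for $\alpha \circ \beta$ is symmetric. Taking the infimum over admissible $\e_1, \e_2$ yields $d_I(F,H) \leq d_I(F,G) + d_I(G,H)$, completing the proof.
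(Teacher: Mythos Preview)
The paper does not supply its own proof of this proposition: it is stated with a citation to \cite[Thm.~3.15]{Bubenik2015} and no argument is given. Your sketch is the standard one and is essentially correct; it is in spirit the same argument as in the cited reference.

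One small wording issue: you write ``Fix $\e_1 > d_I(F,G)$ \ldots\ with interleavings.'' Strictly speaking, not every $\e_1 > d_I(F,G)$ need admit a $\T_{\e_1}$-interleaving unless you have first established monotonicity of interleavings in $\e$. What you actually need (and what your closing line ``taking the infimum over admissible $\e_1,\e_2$'' suggests you intend) is to choose $\e_1$ in the set $\{\e \geq 0 \mid F,G \text{ are } \T_\e\text{-interleaved}\}$ arbitrarily close to its infimum. With that adjustment, the composition you describe and the four-step diagram chase go through as stated, using Conditions~(1)--(4) of Defn.~\ref{defn:flow} to justify each of the structure-map comparisons.
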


\section{General loss function bounding the interleaving distance}

\label{sec_loss}

In this section, we give a bound for the interleaving distance for generalized persistence modules by defining a loss function inspired by \cite{2024bounding}. 
Throughout this section, we assume that $F$ and $G$ are generalized persistence modules in $C^P$ for  a concrete category $C$ and a poset $P$ with a commutative flow $\T$.

\subsection{The merging distance function}
\label{ssec:MergingDistance}

Fix $p \in P$. 
We will build a notion of distance on $F(p)$ by measuring how much "we need to flow" on $P$ to attain an element equality. 
For that reason, we will assume that $\T_0 = \Id$.

\begin{definition}
    \label{def_merging}
    For $a,b \in F(p)$, define
    \begin{equation*}
        d_p^F(a,b) = \inf\{\e \geq 0 \mid F[p \leq T_{\e} p](a) =  F[p \leq T_{\e} p](b) \}.
    \end{equation*}
    Since the function is indeed a distance (which will be shown in Theorem \ref{th_ultrametric}), we call it the \textit{merging distance} function. 
\end{definition}

\begin{lemma}
    Let $a,b \in F(p)$. 
    If there is $\cS \in \Trans(P)$ such that 
    $F[p \leq \cS\,p](a) =  F[p \leq \cS\,p](b)$, 
    then for any $\cS \leq \cS' $, 
    $F[p \leq \cS'\, p](a) =  F[p \leq \cS' \, p](b)$.
    \label{lemma_slide}
\end{lemma}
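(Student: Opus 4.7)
The plan is to reduce everything to the functoriality of $F$ and the Thin Lemma (Lemma~\ref{lemma_thin}). The hypothesis $\cS \leq \cS'$ means that there is a natural transformation $\cS \Rightarrow \cS'$ in $\Trans(P)$, which by the corollary to the Thin Lemma translates into the pointwise inequality $\cS\,p \leq \cS'\,p$ in $P$ for every $p$. In particular, $p \leq \cS\,p \leq \cS'\,p$ gives a chain in $P$ viewed as a thin category.

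Applying $F$ to this chain and invoking Corollary~\ref{coro_split} (or equivalently, the Thin Lemma together with functoriality, since $P$ need not be finite the argument is identical — composition is the unique morphism between two comparable points), I obtain the factorization
\begin{equation*}
F[p \leq \cS'\,p] \;=\; F[\cS\,p \leq \cS'\,p] \,\circ\, F[p \leq \cS\,p].
\end{equation*}
Evaluating both sides on $a$ and on $b$ and using the hypothesis $F[p \leq \cS\,p](a) = F[p \leq \cS\,p](b)$ then yields
\begin{equation*}
F[p \leq \cS'\,p](a) \;=\; F[\cS\,p \leq \cS'\,p]\bigl(F[p \leq \cS\,p](a)\bigr) \;=\; F[\cS\,p \leq \cS'\,p]\bigl(F[p \leq \cS\,p](b)\bigr) \;=\; F[p \leq \cS'\,p](b),
\end{equation*}
which is the desired equality.

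There is no real obstacle; the only mildly subtle point is that Corollary~\ref{coro_split} is stated for finite posets, so in the write-up I would either note that the proof of that corollary uses only the Thin Lemma (and therefore applies verbatim to arbitrary posets) or invoke the Thin Lemma directly for the three-element chain $\{p, \cS\,p, \cS'\,p\} \subseteq P$. The lemma is then immediate.
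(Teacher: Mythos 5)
Your proof is correct and matches the paper's argument exactly: factor $F[p \leq \cS'\,p]$ through $F(\cS\,p)$ via Corollary~\ref{coro_split} and apply the hypothesis. Your remark about Corollary~\ref{coro_split} being stated only for finite posets is a fair observation (the paper cites it without comment), and your fix — invoking the Thin Lemma directly on the chain $p \leq \cS\,p \leq \cS'\,p$ — is the right one.
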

This can be restated as follows: if the translation by $\cS$ in $F$ attains the equality of $a$ and $b$, then the equality is attained by any translation  $\cS' \geq \cS$.

\begin{proof}[Proof of Lem.~\ref{lemma_slide}]
We compute
    $$
    F[p \leq \cS' p](a) = F[\cS\,p \leq\cS' p] F[p \leq \cS p] (a) =  F[\cS \, p \leq \cS'\, p] F[p \leq \cS\, p] (b) = F[p \leq \cS'\, p](b) 
    $$
where the first and third equality follow from Cor.~\ref{coro_split} and the second equality follows from the assumption.
\end{proof}

Then we have the following immediate corollary.

\begin{corollary}
    For any $\delta > d_p^F(a,b)$ we have that $F[p \leq \T_{\delta} p](a) =  F[p \leq T_{\delta} p](b)$.
    \label{coro_slide}
\end{corollary}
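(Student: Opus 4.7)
The plan is to unpack the definition of infimum and then feed the resulting translation into Lemma \ref{lemma_slide}. Since $\delta > d_p^F(a,b)$, by the definition of $d_p^F$ as an infimum, $\delta$ cannot be a lower bound of the set $\{\e \geq 0 \mid F[p \leq \T_\e p](a) = F[p \leq \T_\e p](b)\}$. Hence there exists some $\e$ in this set with $d_p^F(a,b) \leq \e < \delta$, witnessing the equality $F[p \leq \T_\e p](a) = F[p \leq \T_\e p](b)$.

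Next, I would invoke Condition (2) of Definition \ref{defn:flow}: since $\e < \delta$, we have $\T_\e q \leq \T_\delta q$ for every $q \in P$, so $\T_\e \leq \T_\delta$ as translations of $P$. Applying Lemma \ref{lemma_slide} with $\cS = \T_\e$ and $\cS' = \T_\delta$ then yields the desired equality $F[p \leq \T_\delta p](a) = F[p \leq \T_\delta p](b)$.

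No step looks like a genuine obstacle; the only subtlety is making sure the set over which the infimum is taken is actually witnessed strictly below $\delta$, rather than only approached in the limit. This is handled precisely because the flow monotonicity in Lemma \ref{lemma_slide} lets any single witness $\e < \delta$ in the set push the equality up to $\delta$, so we do not need to worry about whether $d_p^F(a,b)$ itself lies in the set.
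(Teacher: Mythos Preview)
Your proof is correct and follows exactly the approach the paper intends: the corollary is stated there as an immediate consequence of Lemma~\ref{lemma_slide}, and your argument spells out precisely that deduction via the definition of infimum and the flow monotonicity condition.
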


Next, we check the distance properties of $d_p^F$. 
\begin{theorem}
    \label{th_ultrametric}
    The merging distance $d_p^F$ is an extended pseudo-ultrametric on $F(p)$. 
\end{theorem}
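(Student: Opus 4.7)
The plan is to verify the three defining properties of an extended pseudo-ultrametric on $F(p)$: (i) $d_p^F(a,a) = 0$, (ii) symmetry $d_p^F(a,b) = d_p^F(b,a)$, and (iii) the strong (non-Archimedean) triangle inequality $d_p^F(a,c) \leq \max\{d_p^F(a,b), d_p^F(b,c)\}$. The word \emph{extended} is automatic from the convention $\inf \emptyset = \infty$, and \emph{pseudo} is automatic because we make no claim that $d_p^F(a,b) = 0$ forces $a = b$ (there might be some positive $\e$ whose image collapses distinct elements, with no $\e = 0$ witness required).

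For (i), I would use the standing assumption $\T_0 = \Id$ together with functoriality: $F[p \leq \T_0 p] = F[p \leq p] = \id_{F(p)}$, so $\e = 0$ belongs to the set whose infimum defines $d_p^F(a,a)$. For (ii), the defining condition $F[p \leq \T_\e p](a) = F[p \leq \T_\e p](b)$ is literally symmetric in $a$ and $b$, so the two infima are taken over the same set.

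The only substantive step is (iii). I would fix $a,b,c \in F(p)$, and handle the trivial case where either $d_p^F(a,b)$ or $d_p^F(b,c)$ is $\infty$ separately (the bound is then vacuous). Otherwise, let $\delta > \max\{d_p^F(a,b), d_p^F(b,c)\}$ be arbitrary. By Corollary \ref{coro_slide} applied to each pair, both
\begin{equation*}
F[p \leq \T_\delta p](a) = F[p \leq \T_\delta p](b) \quad \text{and} \quad F[p \leq \T_\delta p](b) = F[p \leq \T_\delta p](c)
\end{equation*}
hold simultaneously at the same $\delta$, hence $F[p \leq \T_\delta p](a) = F[p \leq \T_\delta p](c)$, so $\delta$ lies in the set defining $d_p^F(a,c)$. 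Taking the infimum over such $\delta$ yields the ultrametric inequality.

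The only subtle point is that one must use the slide corollary, rather than trying to add together two near-optimal $\e$'s; the strength of Corollary \ref{coro_slide} is precisely that it lets us push both equalities up to a common translation $\T_\delta$ without incurring any sum, which is exactly what gives the ultrametric (rather than merely metric) triangle inequality. Once this observation is made the argument is routine, so I expect no real obstacle.
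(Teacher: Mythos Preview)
Your proposal is correct and follows essentially the same approach as the paper: both establish the ultrametric inequality by pushing the two witnessing equalities up to a common translation parameter (you via Corollary~\ref{coro_slide} with an arbitrary $\delta > \max$, the paper via Lemma~\ref{lemma_slide} after ordering the two values) and then invoking transitivity of equality. Your formulation is arguably slightly cleaner in its handling of the infimum, but the idea is identical.
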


\begin{proof}
    For any $a,b \in P$, it is immediate from the definition that  $d_p^F(a,b) = d_p^F(b,a)$,  that $ d_p^F(a,b) \geq 0$, and that $d_p^F(a,a) = 0$.  
    However it can be the case that $d_p^F(a,b) = 0$ but $a$ and $b$ are not equal, see Example \ref{example_1}.
    The distance can take infinite values since $d_p^F(a,b) = \infty$ if there is no $\e \geq  0$ such that $F[p \leq T_{\e} p](a) =  F[p \leq \T_{\e} p](b)$.
    
    Finally we show that 
    $$  d_p^F(a,b) \leq \max\{d_p^F(a,c), d_p^F(c,b)\}$$ 
    for any $c \in F(p)$ for the ultrametric property.
    If either $d_p^F(a,c)$ or $d_p^F(c,b)$ are infinite we are done, so without loss of generality, assume $d_p^F(a,c) \leq d_p^F(c,b) < \infty $. 
    Choose $0 \leq \e \leq \e'$ such that 
    $$
    F[p \leq \T_{\e} p](a) =  F[p \leq \T_{\e} p](c)
    \qquad and \qquad
    F[p \leq \T_{\e'} p](c) =  F[p \leq \T_{\e'} p](b).
    $$
    Then, by Lemma \ref{lemma_slide}, $ F[p \leq \T_{\e'} p](a) = F[p \leq \T_{\e'} p](c) = F[p \leq \T_{\e'} p](b)
    $. Hence, since $\e'$ is arbitrary we obtain that $d_p^F(a,b) \leq d_p^F(c,b)$. 
\end{proof}

\begin{example}
     \label{example_1}  
     Define the flow $\T: [0,\infty) \to \Trans([0,\infty))$ by $T_\e(m) =  m + \e$. Consider the modules $F$ and $F'$ in $\Set^{[0,\infty)}$ given by
    \begin{equation*}
         F(i) = \left\{ \begin{array}{cc}
             \{a,b\} & i <3 \\
              \{a\} &  i \geq 3
         \end{array}\right. \hspace{3cm}
         F'(i) = \left\{ \begin{array}{cc}
             \{a,b\} & i\leq 3 \\
              \{a\} &  i > 3
         \end{array}\right.     
     \end{equation*}
     with morphisms sending $a \to a$ and $b \to b$ when possible, and $b \to a$ when its not. 
     Visually, we can think of $F$ as the diagram of maps 
   \[
   \begin{tikzcd}[cramped, row sep = tiny]
   F(0) \ar[r]
   & F(1) \ar[r] 
   & F(2) \ar[r]
   & F(3) \ar[r] 
   & \cdots \ar[r]
   & F(i) \ar[r]
   & \cdots 
   \\
     {a_0} \ar[r,mapsto]
    & {a_1} \ar[r,mapsto]
    & {a_2} \ar[r,mapsto]
    & {a_3} \ar[r,mapsto]
    & {\cdots} \ar[r,mapsto]
    & {a_i} \ar[r] 
    & \cdots 
    \\
	 {b_0} \ar[r,mapsto]
    & {b_1} \ar[r,mapsto]
    & {b_2} \ar[ur,mapsto]
\end{tikzcd}\]

and $F'$ is the same except for the map into $F(3)$. 
Note that $F$ and $F'$ are almost the same, they have the same critical point but  change the size of sets differently on either side of 3. 
We can check that $d_i^F(a,b) = d_i^{F'}(a,b) = 3 - i$ for $i < 3$ and $d_3^{F'}(a,b) = 0$. This shows we may have different elements with distance 0. 

Also, notice that this distance corresponds to "how far" in terms of the flow is $i$ is from $3$. This gives us that for $i < j < 3$ and $\delta \geq (j-i)$
\[
d_j^F(a,b) < d_i^F(a,b) \leq d_j^F(a,b) + \delta
\]
\end{example}

This result is formalized in the following lemmas.

\begin{lemma}
    \label{lemma_preimage}
    If $p \leq q$ in $P$ and $b,b' \in F(p)$ with $c = F[p \leq q](b)$ and $c' = F[p \leq q](b')$, 
    then $d_{q}^F (c,c') \leq  d_{p}^F (b,b')$.
\end{lemma}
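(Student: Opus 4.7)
The plan is to transfer a merging witness from $p$ up to $q$ by precomposing with $F[p \leq q]$, using Corollary \ref{coro_split} to split the various morphisms. Intuitively, if $b$ and $b'$ have already become equal after flowing by $\e$ from $p$, then their images $c,c'$ at $q$ must also be equal after flowing by $\e$ from $q$, because the diagram of morphisms is forced to commute.

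First, I would pick an arbitrary $\e > d_p^F(b,b')$. By Corollary \ref{coro_slide}, this gives $F[p \leq \T_\e p](b) = F[p \leq \T_\e p](b')$. Next I would observe two comparability facts that let me split $F[p \leq \T_\e q]$ two different ways:
\begin{itemize}
    \item $p \leq q \leq \T_\e q$, which holds because $\T_\e \in \Trans(P)$ so $\Id \leq \T_\e$;
    \item $p \leq \T_\e p \leq \T_\e q$, which holds because $\T_\e$ is order-preserving (Condition (1) in Defn.~\ref{defn:flow}) applied to $p \leq q$.
\end{itemize}

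Then I would chain these together via Corollary \ref{coro_split}:
\begin{align*}
F[q \leq \T_\e q](c)
&= F[q \leq \T_\e q] \circ F[p \leq q](b) = F[p \leq \T_\e q](b) \\
&= F[\T_\e p \leq \T_\e q] \circ F[p \leq \T_\e p](b) \\
&= F[\T_\e p \leq \T_\e q] \circ F[p \leq \T_\e p](b') \\
&= F[p \leq \T_\e q](b') = F[q \leq \T_\e q] \circ F[p \leq q](b') = F[q \leq \T_\e q](c'),
\end{align*}
where the middle equality uses the assumption on $b,b'$ at level $p$.

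This shows $\e$ is an admissible value in the infimum defining $d_q^F(c,c')$, so $d_q^F(c,c') \leq \e$. Taking the infimum over all such $\e$ gives $d_q^F(c,c') \leq d_p^F(b,b')$. The only subtlety is setting up the two splittings of $F[p \leq \T_\e q]$ correctly, but since this is just bookkeeping with the flow axioms and $\T$ being order-preserving, I do not anticipate any real obstacle; the case $d_p^F(b,b') = \infty$ is handled trivially since the inequality is then vacuous.
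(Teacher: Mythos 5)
Your proposal is correct and follows essentially the same route as the paper: fix $\e > d_p^F(b,b')$, use Corollary~\ref{coro_slide} to merge $b,b'$ at $\T_\e p$, and push the equality through the commuting square with corners $F(p)$, $F(q)$, $F(\T_\e p)$, $F(\T_\e q)$ (the paper invokes the Thin Lemma where you explicitly write out the two decompositions of $F[p \leq \T_\e q]$ via Corollary~\ref{coro_split}, which amounts to the same thing). The final passage to the infimum is handled identically in both arguments.
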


\begin{proof}
Fix $\delta > d_{p}^F (b',b)$. 
By Corollary \ref{coro_slide} we have that $F[p \leq \T_\delta p](b) = F[p \leq \T_\delta p](b') =: x$. 
 By the Thin Lemma, the diagram 
 \begin{equation*}
 \begin{tikzcd}
     F(p) \ar[r] \ar[d] & F(q) \ar[d] 
     & \substack{b\\b'} \ar[r, mapsto, shift left] \ar[r, mapsto, shift right] \ar[d, mapsto] 
     & \substack{c\\c'} \ar[d, dashed, mapsto] \\
     F(T_\delta p) \ar[r] & F(T_\delta q) 
     & x \ar[r, mapsto] & y
 \end{tikzcd}
 \end{equation*}
 commutes taking $y = F[T_\delta p \leq T_\delta q](x).$
So since $b,b'$ map to the same element, $c$ and $c'$ also map to the same element, explicitly, $y = F[q \leq \T_\delta q](c) = F[q \leq \T_\delta q](c')$.
This implies that $\delta \geq d_{q}^F (c',c)$ and, since the $\delta$ is arbitrary, by definition of infimum the conclusion follows.
\end{proof}

Intuitively, Lemma \ref{lemma_preimage} says that the merge distance is non-increasing when flowing forward. 
Further, by setting $q = \T_\delta \; p$ in Lemma ~\ref{lemma_preimage} for any $\delta \in (d_{p}^F (b',b),\infty)$,  by Cor.~\ref{coro_slide} for any $c,c' \in F(T_\delta \; p) = F(q)$,  we find that $d_{q}^F (c',c) = 0$.

When it comes to bound $ d_{p}^F (b',b)$ in terms of $d_{q}^F (c,c')$ the Archimedean property of the flow is a sufficient condition 

\begin{lemma}
    \label{lemma_preimage_Archimedean}
    Let $\T$ be an Archimedean flow on $P$. If $p \leq q$ in $P$ and $b,b' \in F(p)$ with $c = F[p \leq q](b)$ and $c' = F[p \leq q](b')$, then, there is $\delta$ such that $d_{q}^F (b,b') <  d_{p}^F (c,c') + \delta$.
\end{lemma}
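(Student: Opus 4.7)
The plan is to use the Archimedean property to trade a merging at level $q$ for a merging at level $p$, paying only a bounded amount for flowing from $p$ up to $q$. Reading the statement so that the subscripts and arguments type-check, namely as $d_p^F(b,b') < d_q^F(c,c') + \delta$, this is the natural counterpart to Lemma~\ref{lemma_preimage}: whereas the merge distance can only drop under the pushforward $F[p \leq q]$, the Archimedean assumption controls how much it can drop.

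I would first dispose of the trivial case $p = q$ (where $b = c$, $b' = c'$, and any $\delta > 0$ works). Assume $p < q$ and use Definition~\ref{def:Archimedean} to pick some $\delta_0 \geq 0$ with $\T_{\delta_0} p \geq q$. For an arbitrary $\eta > d_q^F(c,c')$, Corollary~\ref{coro_slide} yields $F[q \leq \T_\eta q](c) = F[q \leq \T_\eta q](c')$. Substituting $c = F[p \leq q](b)$, $c' = F[p \leq q](b')$ and invoking the composition rule of Corollary~\ref{coro_split} along $p \leq q \leq \T_\eta q$ rewrites this as
\[
F[p \leq \T_\eta q](b) = F[p \leq \T_\eta q](b').
\]

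Next, monotonicity of $\T_\eta$ (Condition~(1) of Definition~\ref{defn:flow}) together with Condition~(4) gives $\T_\eta q \leq \T_\eta \T_{\delta_0} p \leq \T_{\delta_0 + \eta} p$, so one more application of Corollary~\ref{coro_split}, splitting through $\T_\eta q$, propagates the equality to
\[
F[p \leq \T_{\delta_0 + \eta} p](b) = F[p \leq \T_{\delta_0 + \eta} p](b').
\]
Hence $d_p^F(b,b') \leq \delta_0 + \eta$, and letting $\eta \downarrow d_q^F(c,c')$ gives $d_p^F(b,b') \leq \delta_0 + d_q^F(c,c')$. Choosing $\delta$ to be any real number strictly greater than $\delta_0$ then delivers the required strict inequality.

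The only subtleties are the bookkeeping for $p = q$ and for $d_q^F(c,c') = \infty$ (in which case there is nothing to prove), plus verifying that the Archimedean step is applied to a genuine strict inequality. Beyond that, the argument is a direct diagram chase using functoriality and the flow axioms; the real conceptual content sits in the Archimedean hypothesis, which converts the one-directional non-increasing behavior of the merge distance under pushforward (Lemma~\ref{lemma_preimage}) into a two-sided bound.
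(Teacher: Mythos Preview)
Your argument is correct and follows the same overall strategy as the paper: pick $\eta > d_q^F(c,c')$, use Corollary~\ref{coro_slide} and functoriality to obtain $F[p \leq \T_\eta q](b) = F[p \leq \T_\eta q](b')$, then invoke the Archimedean property to trade $\T_\eta q$ for some $\T_{\bullet} p$. The one noteworthy difference is the point at which Archimedean is applied. You apply it once to the fixed pair $p < q$ to get a single $\delta_0$ with $\T_{\delta_0} p \geq q$, and then the flow axioms give $\T_\eta q \leq \T_{\delta_0 + \eta} p$ uniformly in $\eta$; this makes the passage $\eta \downarrow d_q^F(c,c')$ completely clean. The paper instead applies Archimedean to $p < \T_\gamma q$, producing a $\hat\delta$ that a priori depends on $\gamma$, so its final line ``since $\gamma$ was arbitrary'' is slightly awkward as written (the conclusion is still true, but your ordering avoids the issue entirely). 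Your explicit handling of the degenerate cases $p=q$ and $d_q^F(c,c')=\infty$ is also a small improvement in tidiness over the paper's proof.
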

\begin{proof}
    Following the same notation as the previous lemma, we will show there is $\delta$ such that $d_{p}^F(b,b') < d_{q}^F(c,c') + \delta$. To find such a $\delta$, take 
    $\gamma > d_{q}^F(c,c')$. 
    Then, by Cor.~\ref{coro_slide} this gives that 
    $F[q \leq T_\gamma q](c) = F[q \leq T_\gamma q](c') $.
    Thus 
    \begin{align*}
    F[p \leq T_\gamma q](b) 
    &= F[q \leq T_\gamma q]F[p \leq q](b)\\
    &   =F[q \leq T_\gamma q](c) \\
    &   =F[q \leq T_\gamma q](c') \\
    &   = F[q \leq T_\gamma q]F[p \leq q](b')\\
    &= F[p \leq T_\gamma q](b'). 
    \end{align*}
    Because $\T$ is a flow, 
    $p < \T_\gamma q$, so by the Archimedean assumption, there is a $\hat{\delta}$ such that $\T_{\hat{\delta}} p > T_\gamma q$. 
    Note that along with $\T_{\hat{\delta}} q \geq \T_{\hat{\delta}} p > T_\gamma q$, this implies that $\hat{\delta} > \gamma$, so 
    \begin{equation*}
    F[p \leq \T_{\hat{\delta}} p](b) = F[p \leq \T_{\hat{\delta}} p](b')
    \end{equation*}
    Hence setting $\delta = \hat{\delta} -d_{q}^F(c,c') $, we have 
    \[
    \gamma+\delta = \gamma + (\hat{\delta} -d_{q}^F(c,c')) > \gamma + (\hat{\delta} - \gamma) = \hat{\delta} > d_{p}^F(b,b')
    \]
    Since $\gamma$ was arbitrary we obtain that $d_{q}^F(c,c') + \delta > d_{p}^F(b,b')$ finishing the proof.
\end{proof}

\subsection{The loss function}
\label{ssec:lossfunction}
Our next job is to define structures that take the form of natural transformations, but lack the usual guarantees. 

\begin{definition}
    Given functors $F,F' \in C^P$, an \textit{unnatural transformation} 
    $\eta: F \rightdsar F'$ 
    is a collection of maps $\eta_p: F(p) \to F'(p)$ with no additional promise of commutativity. 
    For a fixed $\cS \in \Flows_{\T}$ (Def.~\ref{def:flowsCategory}) and functors $F,G \in C^P$, an $\cS$-\textit{assignment of} $F$ and $G$ is a pair of unnatural transformations $\phi: F \rightdsar G\cS$ and $\psi: G \rightdsar F\cS$ which
    we write as $(\phi,\psi)$.
\end{definition}

For fixed $p,q \in P$ and $(\phi,\psi)$ a $\cS$-assignment of $F$ and $G$ consider the following diagrams indexed by finite categories in the category $C$. 
\[
\begin{tikzcd}[cramped, row sep =10pt]
	& \bullet && \bullet &&& {F(p)} && {F(q)} \\
	{\Parallelograml_\phi(p,q):} &&&& {} && {} \\
	&& \bullet && \bullet &&& {G(\cS p)} && {G(\cS q)}
	\arrow[from=1-2, to=1-4]
	\arrow[from=1-2, to=3-3]
	\arrow[curve={height=-6pt}, from=1-2, to=3-5]
	\arrow[curve={height=6pt}, from=1-2, to=3-5]
	\arrow[from=1-4, to=3-5]
	\arrow["{F[\; \leq \;]}"{description}, from=1-7, to=1-9]
	\arrow["{\phi_p}"{description}, from=1-7, to=3-8]
	\arrow["{G[\;\leq \;] \circ \phi_p}"{description}, curve={height=12pt}, from=1-7, to=3-10]
	\arrow["{\phi_q \circ F[\; \leq \;]}"{description}, curve={height=-12pt}, from=1-7, to=3-10]
	\arrow["{\phi_q}"{description}, from=1-9, to=3-10]
	\arrow[maps to, from=2-5, to=2-7]
	\arrow[from=3-3, to=3-5]
	\arrow["{G[\;\leq \;]}"{description}, from=3-8, to=3-10]
\end{tikzcd}
\]

\[\begin{tikzcd}[cramped, row sep =10pt]
	&& \bullet && \bullet &&& {F(\cS p)} && {F(\cS q)} \\
	{\Parallelogramr_\phi(p,q):} &&&& {} && {} \\
	& \bullet && \bullet &&& {G(p)} && {G(q)}
	\arrow[from=1-3, to=1-5]
	\arrow["{F[\;\leq \;]}"{description}, from=1-8, to=1-10]
	\arrow[maps to, from=2-5, to=2-7]
	\arrow[from=3-2, to=1-3]
	\arrow[curve={height=-6pt}, from=3-2, to=1-5]
	\arrow[curve={height=6pt}, from=3-2, to=1-5]
	\arrow[from=3-2, to=3-4]
	\arrow[from=3-4, to=1-5]
	\arrow["{\psi_p}"{description}, from=3-7, to=1-8]
	\arrow["{\psi_q \circ G[\;\leq \;] }"{description}, curve={height=6pt}, from=3-7, to=1-10]
	\arrow["{F[\; \leq \;] \circ \psi_p}"{description}, curve={height=-12pt}, from=3-7, to=1-10]
	\arrow["{G[\; \leq \;]}"{description}, from=3-7, to=3-9]
	\arrow["{\psi_q}"{description}, from=3-9, to=1-10]
\end{tikzcd}\]

\[\begin{tikzcd}[cramped, row sep =10pt]
	& \bullet && \bullet && {F(p)} && {F(\cS \cS p)} \\
	{\triangled_{\phi,\psi}(p):} &&& {} && {} \\
	&& \bullet &&&& {G(\cS p)}
	\arrow[from=1-2, to=1-4]
	\arrow[shift left, curve={height=18pt}, from=1-2, to=1-4]
	\arrow[from=1-2, to=3-3]
	\arrow["{F[\;\leq \;]}"{description}, from=1-6, to=1-8]
	\arrow["{\psi_{\cS p} \circ \phi_p}"{description}, shift left, curve={height=18pt}, from=1-6, to=1-8]
	\arrow["{\phi_p}"{description}, from=1-6, to=3-7]
	\arrow[maps to, from=2-4, to=2-6]
	\arrow[from=3-3, to=1-4]
	\arrow["{\psi_{\cS p}}"{description}, from=3-7, to=1-8]
\end{tikzcd}\]

\[\begin{tikzcd}[cramped, row sep =10pt]
	&& \bullet &&&& {G(\cS p)} \\
	{\triangleu_{\phi,\psi}(p):} &&& {} && {} \\
	& \bullet && \bullet && {G(p)} && {G(\cS \cS p)}
	\arrow[from=1-3, to=3-4]
	\arrow["{\phi_{\cS p}}"{description}, from=1-7, to=3-8]
	\arrow[maps to, from=2-4, to=2-6]
	\arrow[from=3-2, to=1-3]
	\arrow[from=3-2, to=3-4]
	\arrow[shift left, curve={height=-18pt}, from=3-2, to=3-4]
	\arrow["{\psi_p}"{description}, from=3-6, to=1-7]
	\arrow["{F[\;\leq \;]}"{description}, from=3-6, to=3-8]
	\arrow["{\phi_{\cS p} \circ \psi_p}"{description}, shift left, curve={height=-18pt}, from=3-6, to=3-8]
\end{tikzcd}\]

We give names for each diagram at the left side of each diagram. 
Our goal to measure the quality of a given assignment  $(\phi,\psi)$ by checking how far the parallel morphisms defined in each diagram are from being equal.
This is because if they were equal, the structure in question would indeed be a natural transformation (by the first two) and an interleaving (by the second two). 
This next definition writes the distance function between natural transformations using the distance already defined in the codomain, largely to simplify notation later. 

\begin{definition}
    Fix $q \in P$ and let $f,g: A \to F(q)$ be a pair of morphisms in $C$ with source $A \in C$. 
    We define the distance between $f$ and $g$ by
    \begin{equation*}
        d(f,g) = \sup_{a \in A} \left\{ d_{q}^F (f(a),g(a)) \right\}.
    \end{equation*}
\end{definition}

With this definition, we now have four measurements to be made for the diagrams above. 

\begin{definition}
\label{def:loss}
    Fix a $\cS$-assignment  $(\phi,\psi)$. 
    We define the loss of each diagram by 
    \begin{align*}
        \Lpl^{p,q}(\phi) &= d(\phi_q \circ  F[p \leq q], G[\cS p \leq \cS q] \circ \phi_p) & 
        \Ltd^p(\phi,\psi ) &= d(F[ p \leq \cS \cS p] , \psi_{\cS p} \circ \phi_p ) \\
        \Lpr^{p,q}(\psi) &= d( \psi_q \circ G[p \leq q] , F[\cS p \leq \cS q] \circ \psi_p) & 
       \Ltu^p(\phi,\psi ) &= d (G[ p \leq \cS \cS p] , \phi_{\cS p} \circ \psi_p).
    \end{align*}
    The loss for the given assignment is defined to be
    \begin{equation}
        \label{eq_L_general}
        L(\phi,\psi) = \sup_{p < q} \left\{  \Lpl^{p,q}(\phi),  \Ltd^q(\phi,\psi ), \Lpr^{p,q}(\psi) , \Ltu^q(\phi,\psi )\right\}.
    \end{equation}
\end{definition}

Note that a $\T_\e$-assignment $(\phi,\psi)$ with loss 0 is a $\T_\e$-interleaving between $F$ and $G$.   

\begin{example}
\label{example_F_G} 
As in Ex.~\ref{example_1}, define the flow $\T: [0,\infty) \to \Trans(\N)$ by $\T_\e(m) =  m + \floor{\e}$. 
Consider the merge trees $F$ and $G$ in $\Set^{\N}$ given by
   \[\begin{tikzcd}[cramped, row sep = tiny]
	{F:} 
    & {a_0} \ar[r, mapsto]
    & {a_1}  \ar[r, mapsto]
    & {a_2}  \ar[r, mapsto]
    & {a_3}  \ar[r, mapsto]
    & {\cdots}  \ar[r, mapsto]
    & {a_i} 
    \\
	& {b_0}  \ar[r, mapsto]
    & {b_1}  \ar[r, mapsto]
    & {b_2}  \ar[ur, mapsto]\\
	{G:} 
    & {c_0} \ar[r, mapsto]
    & {c_1} \ar[r, mapsto] 
    & {c_2} \ar[r, mapsto] 
    & {c_3} \ar[r, mapsto] 
    & {\cdots} \ar[r, mapsto] 
    & {c_i} 
    \\
	& {d_0}  \ar[ur, mapsto]
    && {d_2} \ar[ur, mapsto]
\end{tikzcd}\]
The distance between the points in their respective sets are:
\begin{equation*}
    d_0^F(a_0,b_0) =  3 \qquad d_1^F(a_1,b_1) = 2 \qquad  d_2^F(a_2,b_2) = 1 \qquad d_0^G(c_0,d_0) =  1 \qquad d_2^G(c_2,d_2) = 1  
\end{equation*}

Consider the $\T_1$-assignment $\phi$  and $\psi$ given by:
\begin{align*}
    \phi_i(a_i) = c_{i+1}  \qquad  \phi_0(b_0) = c_1 \qquad  \phi_1(b_1) = d_2, \qquad \phi_2(b_2) = c_3\\
    \psi_i(c_i) = a_{i+1}  \qquad  \psi_0(d_0) = b_1  \qquad    \psi_2(d_2) = a_3
\end{align*}
Then, the non zero loss terms we have are 
$$
\Lpl^{0,1}(\phi) = \Ltu^{0}(\phi,\psi) = d_2^G(c_2,d_2) = 1  
\quad \text{and} \quad  
\Ltd^{0}(\phi,\psi) =  \Lpr^{0,1}(\phi) = d_2^F(a_2,b_2) =  1. 
$$
So $L(\phi,\psi) = 1$.
\end{example}

\subsection{Bounding the interleaving distance}
\label{ssec:BoundingInterleaving}

In this subsection, we show how a $\T_\e$-assignment with loss $\delta $ induces a $\T_\delta\T_\e$-interleaving thereby bounding the interleaving distance by $\delta + \e$. 

\begin{definition}
Let $\cS \in \Flows_{\T}$  and take $(\phi,\psi)$ a $\cS$-assignment. 
Given a translation $\cR \in \Flows_\T$, the $\cR \cS$-assignment $(\phi',\psi')$  given by 
$\phi' = G[\cR] \circ \phi$
and 
$\psi' = F[\cR] \circ \psi$
is called  the \emph{$\cR$-translation of $(\phi,\psi)$}. 
\end{definition}

We next analyze how the $\cR$-translation transforms the loss of the assignment $(\phi,\psi)$.

\begin{theorem}
\label{th_L_functor}
    Fix $p,q$ in $P$ and $\cR,\cS \in \Flows_{\T}$. 
    Let  $(\phi,\psi)$ be a $\cS$-assignment and $(\phi',\psi')$ the $\cR$-translation of $(\phi,\psi)$. 
    Then, the following holds.
    \begin{enumerate}[nosep]
        \item $\Lpl^{p,q}(\phi') \leq  \Lpl^{p,q}(\phi)$
        \item $\Ltd^p(\phi',\psi') \leq  \max \{\Ltd^p(\phi,\psi) , \Lpr^{\cS p , \cR \cS p }(\psi) \}$
        \item $\Lpr^{p,q}(\psi') \leq  \Lpr^{p,q}(\psi) $
        \item $\Ltu^p(\phi',\psi') \leq  \max \{\Ltu^p(\phi,\psi) , \Lpl^{\cS p , \cR \cS p }(\phi) \}$.
    \end{enumerate}
\end{theorem}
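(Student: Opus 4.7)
The plan is to unfold the definitions of $\phi'$ and $\psi'$, and then show that each parallel pair of morphisms in the $(\phi',\psi')$-losses differs from a corresponding parallel pair for $(\phi,\psi)$ only by a common post-composition with a forward flow map in $\Flows_\T$. The workhorse throughout is Lemma \ref{lemma_preimage} (merging distance is non-increasing under forward maps) together with Corollary \ref{coro_split} for rewriting compositions. For items (2) and (4), I will additionally insert an intermediate element and apply the ultrametric triangle inequality of Theorem \ref{th_ultrametric}. Commutativity of $\T$ is used tacitly to freely reorder compositions of elements of $\Flows_\T$.

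For item (1), unfold $\phi'_q \circ F[p\le q] = G[\cS q\le \cR\cS q]\circ \phi_q\circ F[p\le q]$ and, by functoriality, $G[\cR\cS p\le \cR\cS q]\circ \phi'_p = G[\cS q\le \cR\cS q]\circ G[\cS p\le \cS q]\circ \phi_p$. Both maps are the common post-composition by $G[\cS q \le \cR\cS q]$ of the two maps whose merging distance defines $\Lpl^{p,q}(\phi)$. For each $a\in F(p)$, Lemma \ref{lemma_preimage} then bounds the distance in $G(\cR\cS q)$ by the distance in $G(\cS q)$; taking the supremum over $a$ gives item (1). Item (3) is the symmetric argument with $F,G$ and $\phi,\psi$ exchanged.

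For item (2), which I expect to be the main obstacle, fix $a \in F(p)$ and introduce three elements of $F(\cR\cS\cR\cS p)$:
\begin{equation*}
X = F[p \le \cR\cS\cR\cS p](a), \quad
Z = \psi'_{\cR\cS p}\circ \phi'_p(a), \quad
Y = F[\cS\cS p \le \cR\cS\cR\cS p]\circ \psi_{\cS p}\circ \phi_p(a).
\end{equation*}
Expanding $\phi',\psi'$ and using Corollary \ref{coro_split}, both $Y$ and $Z$ arise by applying $F[\cS\cR\cS p \le \cR\cS\cR\cS p]$ to the two sides of the $\Lpr^{\cS p,\cR\cS p}(\psi)$-parallelogram evaluated at $\phi_p(a)$, so Lemma \ref{lemma_preimage} gives $d^F_{\cR\cS\cR\cS p}(Y,Z) \le \Lpr^{\cS p,\cR\cS p}(\psi)$. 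Similarly, $X$ and $Y$ arise by applying $F[\cS\cS p \le \cR\cS\cR\cS p]$ to the two sides of $\Ltd^p(\phi,\psi)$ at $a$, so $d^F_{\cR\cS\cR\cS p}(X,Y) \le \Ltd^p(\phi,\psi)$. The ultrametric inequality of Theorem \ref{th_ultrametric} then combines these bounds into the claimed maximum, and taking the supremum over $a$ finishes item (2). Item (4) follows by the symmetric argument, with the intermediate element in $G(\cR\cS\cR\cS p)$ obtained by applying $G[\cS\cS p \le \cR\cS\cR\cS p]$ to $\phi_{\cS p}\circ \psi_p(a)$.

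The core difficulty is bookkeeping: one must select the intermediate element $Y$ so that each of the two halves is genuinely a known loss term pushed forward by a single map in $\Flows_\T$, and verify that commutativity of the flow places the relevant sources and targets correctly so that Lemma \ref{lemma_preimage} applies on both branches.
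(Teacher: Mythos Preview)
Your proposal is correct and follows essentially the same route as the paper: for (1) and (3) you identify the two sides of the $(\phi',\psi')$-parallelogram as a common forward map post-composed with the $(\phi,\psi)$-parallelogram and apply Lemma~\ref{lemma_preimage}, and for (2) and (4) you insert exactly the intermediate element the paper uses (their $e'$ is your $Y$), reducing to one pushed-forward triangle term and one pushed-forward $\psi$-parallelogram term before invoking the ultrametric inequality of Theorem~\ref{th_ultrametric}. The bookkeeping you flag as the main difficulty is handled correctly, with commutativity of the flow used in the same places.
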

    
\begin{proof}
Note that (1) and (3); and (2) and (4) are symmetric. 
So we will only prove statements (1) and (2). 

We start with statement (1). 
Fix $a \in F(p)$. 
Consider the elements 
$b = \phi_q \circ F[p \leq q](a)$,  
$b'= G[\cS p \leq \cS q] \circ \phi_p(a')$, 
$c = \phi'_q \circ F[p \leq q] (a)$ and 
$c' =  G[\cR\cS p \leq \cR\cS q] \circ \phi'_p(a')$. 
Such elements can be seen in the following non-commutative diagrams:

\begin{minipage}{0.45\textwidth}
    \[\begin{tikzcd}[cramped]
        {F(p)} && {F(q)} \\
        & {G(\cS p)} && {G(\cS q)} \\
        & {G(\cR\cS p)} && {G(\cR\cS q)}
        \arrow["{{F[\hspace{.5ex}\leq \hspace{.5ex}]}}", from=1-1, to=1-3]
        \arrow["{{\phi_p}}"', from=1-1, to=2-2]
        \arrow["{\phi'_p}"{description}, curve={height=12pt}, from=1-1, to=3-2]
        \arrow["{{\phi_q}}", from=1-3, to=2-4]
        \arrow["{\phi'_q}"{description}, curve={height=12pt}, from=1-3, to=3-4]
        \arrow["{{G[\; \leq \;]}}"'{pos=0.4}, from=2-2, to=2-4]
        \arrow["{{G[\cR]}}", from=2-2, to=3-2]
        \arrow["{{G[\cR]}}", from=2-4, to=3-4]
        \arrow["{{G[\; \leq \;]}}"', from=3-2, to=3-4]
    \end{tikzcd}\]
\end{minipage}
  \renewcommand{\arraystretch}{1}
\begin{minipage}{0.45\textwidth}
    \[\begin{tikzcd}[cramped,row sep = 10pt]
        a && \bullet \\
        & \bullet && \begin{array}{c} b \\ b' \end{array}  \\
        & \bullet && \begin{array}{c} c \\ c' \end{array}
        \arrow[ maps to, from=1-1, to=1-3]
        \arrow[ maps to, from=1-1, to=2-2]
        \arrow[curve={height=12pt}, maps to, from=1-1, to=3-2]
        \arrow[ maps to, from=1-3, to=2-4]
        \arrow[curve={height=12pt}, pos=0.6,maps to, from=1-3, to=3-4]
        \arrow[shift right=2, maps to, from=2-2, to=2-4]
        \arrow[maps to, from=2-2, to=3-2, dashed]
        \arrow[maps to, from=2-4, to=3-4, dashed]
        \arrow[shift right=2, maps to, from=3-2, to=3-4]
    \end{tikzcd}\]
\end{minipage}

Since by definition $\phi' = G[\cR] \circ \phi$, 
the left and right triangles commute so  
$c = G[\cR]\left(b\right)$ 
and 
$c' = G[\cR] \left(b' \right)$. 
By Lemma $\ref{lemma_preimage}$ we have that 
    \begin{equation*}
        d_{\cR\cS q}^G(c,c') \leq d_{\cS q}^G(b,b').
    \end{equation*}
Therefore, by definition of the parallelogram loss  and the properties of supremum we have 
$    \Lpl^{p,q}(\phi') \leq  \Lpl^{p,q}(\phi)$, 
finishing statement (1). 

For statement (2), let $\cR \cS = \cS' $ and consider the following non necessarily commutative diagram.

\begin{minipage}{0.6\textwidth}
    \[
    \adjustbox{scale=.8,center}{%
    \begin{tikzcd}[cramped]
        {F(p)} && {F(\cS\cS p)} & {F(\cS' \cS p)} & {F(\cS' \cS' p)} \\
        & {G(\cS p)} && {F(\cS \cS' p)} \\
        && {G(\cS' p)}
        \arrow["{F[\; \leq \; ]}", from=1-1, to=1-3]
        \arrow["{{{\phi_p}}}", from=1-1, to=2-2]
        \arrow["{{\phi'_p}}"{description}, curve={height=30pt}, from=1-1, to=3-3]
        \arrow["{F[\cR ]}", from=1-3, to=1-4]
        \arrow["{F[\cR ]}", from=1-4, to=1-5]
        \arrow["{{||}}"{description}, draw=none, from=1-4, to=2-4]
        \arrow["{{{\psi_{\cS p}}}}"', from=2-2, to=1-3]
        \arrow["{{G[\cR ]}}", from=2-2, to=3-3]
        \arrow["{F[\cR ]}"'{pos=0.3}, from=2-4, to=1-5]
        \arrow["{{{\psi'_{\cS' p}}}}"{description}, curve={height=30pt}, from=3-3, to=1-5]
        \arrow["{{{\psi_{\cS' p}}}}"', from=3-3, to=2-4]
    \end{tikzcd}
    }
    \]
\end{minipage}
\hfill
\begin{minipage}{0.35\textwidth}
    \[
    \adjustbox{scale=.8,center}{
    \begin{tikzcd}[cramped,row sep = 10pt,column sep = 16pt]
        a && \begin{array}{c} \begin{array}{c} b \\ b' \end{array} \end{array} & \begin{array}{c} \begin{array}{l} \\c \\ c' \\ c'' \end{array} \end{array} & \begin{array}{c} \begin{array}{c} \\ e \\ e' \\ e'' \end{array} \end{array} \\
        & \bullet \\
        && \bullet
        \arrow[shift left=2, maps to, from=1-1, to=1-3]
        \arrow[maps to, from=1-1, to=2-2]
        \arrow[curve={height=30pt}, maps to, from=1-1, to=3-3]
        \arrow[shift right=2, maps to, from=1-3, to=1-4]
        \arrow[shift left=3, maps to, from=1-3, to=1-4]
        \arrow[shift right=2, maps to, from=1-4, to=1-5]
        \arrow[shift right=7, maps to, from=1-4, to=1-5]
        \arrow[shift left=3, from=1-4, to=1-5]
        \arrow[maps to, from=2-2, to=1-3]
        \arrow[maps to, from=2-2, to=3-3]
        \arrow[shift right=2, maps to, from=3-3, to=1-4]
        \arrow[curve={height=30pt}, maps to, from=3-3, to=1-5]
    \end{tikzcd}
    }\]
\end{minipage}

For $a \in F(p)$ set 
$e = F[p \leq \cS \cS' p](a)$ and 
$e''= \psi'_{\cS' p} \circ \phi'_p (a)$. 
Since by Thm.~\ref{th_ultrametric} the merging distance is an ultrametric, we have that 
$$
d_{\cS \cS' p}^F(e,e'') \leq \max\left\{ d_{\cS' \cS' p}^F(e,e') , d_{\cS' \cS' p}^F(e',e'')\right\}.
$$

By Lemma \ref{lemma_preimage},  $ d_{\cS' \cS' p}^F(e,e') \leq d_{\cS' \cS p}^F(c,c') \leq d_{\cS \cS p}^F(b,b')$. 
On the other hand, by the same Lemma,  $ d_{\cS' \cS' p}^F(e',e'') \leq  d_{\cS \cS' p}^F(c',c'')$. 
Hence
$$
d_{\cS \cS' p}^F(e,e'') \leq \max\left\{ d_{\cS \cS p}^F(b,b') , d_{\cS \cS' p}^F(c',c'')\right\}.
$$
By definition of the loss of the parallelogram and triangle diagrams and the definition of supremum, we conclude that 
$$ 
\Ltd^p(\phi',\psi') \leq  \max \{\Ltd^p(\phi,\psi) , \Lpr^{\cS p , \cR \cS p }(\psi) \}
$$
finishing statement (2) and the proof.
\end{proof}

Intuitively, Theorem \ref{th_L_functor} says that the loss is non-increasing when translating by $\cR$. Further, we can reach a zero-loss for a special $\T_\delta$- translation.

\begin{lemma}
\label{lemma_de-interleaving}
     Given a  a $\T_\e$-assignment $(\phi,\psi)$, choose a $\delta$ such that 
     $$
     \delta > \max\{\Lpl^{p,q}(\phi), \Lpr^{p,q}(\psi),  \Ltd^p(\phi,\psi) , \Lpr^{\T_\e p , \T_\delta \T_\e p }(\psi), \Ltu^p(\phi,\psi) , \Lpl^{\T_\e p , \T_\delta \T_\e p }(\phi)  \}.
     $$
     Define $(\Phi,\Psi)$ to be the $\T_\delta$-translation of $(\phi,\psi)$. 
     Then 
     $$
     \Lpl^{p,q}(\Phi) = 0, \qquad 
     \Lpr^{p,q}(\Psi) = 0, \qquad 
     \Ltd^p(\Phi,\Psi) =0, \qquad \text{and} 
     \qquad \Ltu^p(\Phi,\Psi) =0.
     $$
\end{lemma}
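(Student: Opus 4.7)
The strategy is to combine Theorem~\ref{th_L_functor} with the key observation of Corollary~\ref{coro_slide}: two elements in some $F(r)$ whose merging distance is strictly less than $\delta$ are sent to a common element by $F[r \leq \T_\delta r]$. Since the hypothesis places each relevant loss of $(\phi,\psi)$ strictly below $\delta$, translating by $\T_\delta$ should collapse all four loss measurements of $(\Phi,\Psi)$ to exactly $0$.

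\textbf{Parallelogram terms.} For $\Lpl^{p,q}(\Phi)=0$, fix $a \in F(p)$ and set $b = \phi_q \circ F[p \leq q](a)$ and $b' = G[\T_\e p \leq \T_\e q]\circ \phi_p(a)$, both in $G(\T_\e q)$. The hypothesis gives $d^G_{\T_\e q}(b,b') \leq \Lpl^{p,q}(\phi) < \delta$, so Corollary~\ref{coro_slide} yields $G[\T_\e q \leq \T_\delta\T_\e q](b) = G[\T_\e q \leq \T_\delta\T_\e q](b')$. Unpacking the definition $\Phi_r = G[\T_\e r \leq \T_\delta\T_\e r]\circ \phi_r$ together with Corollary~\ref{coro_split} and commutativity of $\T$, the left-hand side is $\Phi_q \circ F[p\leq q](a)$ and the right-hand side is $G[\T_\delta\T_\e p \leq \T_\delta\T_\e q]\circ \Phi_p(a)$. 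Since $a$ and $p \leq q$ were arbitrary this gives $\Lpl^{p,q}(\Phi) = 0$, and the symmetric argument handles $\Lpr^{p,q}(\Psi) = 0$.

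\textbf{Triangle terms.} The case $\Ltd^p(\Phi,\Psi)=0$ is more delicate and requires both the triangle-loss and a specific parallelogram-loss hypothesis. Writing $\cS' = \T_\delta\T_\e$ and fixing $a\in F(p)$, the plan is to introduce the intermediate element
\[
w := F[\T_\e\T_\e p \leq \cS'\cS' p]\bigl(\psi_{\T_\e p}(\phi_p(a))\bigr) \in F(\cS'\cS' p),
\]
and then show separately that $F[p \leq \cS'\cS' p](a) = w$ and $\Psi_{\cS' p}(\Phi_p(a)) = w$. The first equality follows because the triangle hypothesis makes $F[p\leq \T_\e\T_\e p](a)$ and $\psi_{\T_\e p}\phi_p(a)$ lie less than $\delta$ apart in $F(\T_\e\T_\e p)$, and $\cS'\cS' p \geq \T_\delta\T_\e\T_\e p$, so Corollary~\ref{coro_slide} together with Lemma~\ref{lemma_slide} forces the equality after pushing forward via $F[\T_\e\T_\e p \leq \cS'\cS' p]$. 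The second equality uses the parallelogram-loss hypothesis $\Lpr^{\T_\e p,\cS' p}(\psi) < \delta$ applied at the input $\phi_p(a) \in G(\T_\e p)$, together with the identity $\cS'\cS' p = \T_\delta \T_\e \cS' p$ coming from commutativity; once again Corollary~\ref{coro_slide} converts a merging bound $<\delta$ in $F(\T_\e\cS' p)$ into an equality in $F(\cS'\cS' p)$. Unwinding $\Phi_p = G[\T_\e p \leq \cS' p]\circ\phi_p$ and $\Psi_{\cS' p} = F[\T_\e\cS' p \leq \cS'\cS' p]\circ \psi_{\cS' p}$ in that equality yields $\Psi_{\cS' p}\Phi_p(a) = w$, completing the triangle. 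The case $\Ltu^p(\Phi,\Psi) = 0$ follows by swapping the roles of $F,G$ and $\phi,\psi$.

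\textbf{Main obstacle.} The parallelogram cases drop out almost mechanically from Cor.~\ref{coro_slide}. The substantive difficulty is the triangle case, where the gap between $F[p\leq \cS'\cS' p](a)$ and $\Psi_{\cS' p}\Phi_p(a)$ genuinely decomposes into two separate pieces controlled by two different loss terms of $(\phi,\psi)$; the care goes into choosing the intermediate element $w$, tracking the codomain of each intermediate element, and verifying the flow inequalities $\cS'\cS' p \geq \T_\delta\T_\e\T_\e p$ and $\cS'\cS' p = \T_\delta\T_\e\cS' p$ needed to invoke Cor.~\ref{coro_slide} at the correct stages.
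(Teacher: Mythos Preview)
Your proposal is correct and follows essentially the same approach as the paper: for the parallelogram terms you apply Corollary~\ref{coro_slide} directly to the elements $b,b'$ from the proof of Theorem~\ref{th_L_functor}(1), and for the triangle terms your intermediate element $w$ is exactly the element $e'$ in the paper's diagram from Theorem~\ref{th_L_functor}(2), with the two equalities $e=e'$ and $e'=e''$ obtained from the triangle and parallelogram hypotheses respectively via Corollary~\ref{coro_slide}. The paper simply refers back to that diagram rather than spelling out the element chase as you do, but the argument is the same.
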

 \begin{proof}
 For the parallelogram loss consider the proof of  Statement (1) in Theorem \ref{th_L_functor}. 
 Since 
 $$
 \delta > \Lpl^{p,q}(\phi) \geq d_{\T_\e q}^G(b,b')
 $$ 
 then by Cor.~\ref{coro_slide}, 
 $d_{\T_\delta\T_\e q}^G(c,c') = 0$. 
 Hence $\Lpl^{p,q}(\Phi) = 0$. 
 The symmetric argument gives that $\Lpl^{p,q}(\Psi) = 0$. 

 For the triangle loss consider the proof of Statement (2) in Thm.~\ref{th_L_functor}. 
 Using Cor.~\ref{coro_slide}, we have that for any $a \in F(p)$, 
 $d_{\T_\delta \T_\e \T_\delta \T_\e p}^F(e,e') = 0$ given that  
 $$
 \delta > \Ltd^p(\phi,\psi) \geq d_{\T_\e \T_\e p}^F(b,b'). 
 $$ 
 Similarly $d_{\T_\delta \T_\e \T_\delta \T_\e p}^F(e',e'')=0$ provided that  
 $$
 \delta > \Lpr^{ \T_\e p , \T_\delta \T_\e p }(\psi) \geq d_{T_\delta \T_\e \T_\e p}^F(c',c'').
 $$
 Therefore, $d_{\T_\delta \T_\e \T_\delta \T_\e p}^F(e,e'') = 0$, so, $\Ltd^p(\Phi,\Psi) = 0$. 
 Again symmetrically, $\Ltd^p(\Phi,\Psi) = 0$.
 \end{proof}

\begin{theorem}
\label{th_bounding}
   For any  $\T_\e$-assignment given by $\phi: F \rightdsar G\T_\e$ and $\psi: G \rightdsar F\T_\e$, 
   \begin{equation*}
       d_I(F,G) \leq \e + L(\phi, \psi).
   \end{equation*}
\end{theorem}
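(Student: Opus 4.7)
The plan is to leverage Lemma~\ref{lemma_de-interleaving} to upgrade the given $\T_\e$-assignment $(\phi,\psi)$ into an honest interleaving at a slightly larger parameter. Concretely, fix any $\delta > L(\phi,\psi)$. By the definition of $L$ as a supremum over all $p < q$, every term appearing in the hypothesis of Lemma~\ref{lemma_de-interleaving}, namely $\Lpl^{p,q}(\phi)$, $\Lpr^{p,q}(\psi)$, $\Ltd^p(\phi,\psi)$, $\Ltu^p(\phi,\psi)$, $\Lpl^{\T_\e p,\T_\delta\T_\e p}(\phi)$, and $\Lpr^{\T_\e p,\T_\delta\T_\e p}(\psi)$, is bounded above by $L(\phi,\psi)$ and hence strictly less than $\delta$. (The two ``flowed'' parallelogram terms are either instances of the supremum defining $L$ when $\T_\e p < \T_\delta \T_\e p$, or they vanish trivially when the two points coincide, since the corresponding parallelogram degenerates.)

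Applying Lemma~\ref{lemma_de-interleaving}, the $\T_\delta$-translation $(\Phi,\Psi)$ of $(\phi,\psi)$ is a $\T_\delta\T_\e$-assignment for which all four loss quantities vanish. Vanishing of $\Lpl^{p,q}(\Phi)$ and $\Lpr^{p,q}(\Psi)$ for every $p < q$ is exactly the statement that the squares for $\Phi\colon F \rightdsar G\T_\delta\T_\e$ and $\Psi\colon G \rightdsar F\T_\delta\T_\e$ commute, so $\Phi$ and $\Psi$ are genuine natural transformations. Vanishing of $\Ltd^p(\Phi,\Psi)$ and $\Ltu^p(\Phi,\Psi)$ for every $p$ is exactly the two commuting triangle identities required of an interleaving. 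Therefore $(\Phi,\Psi)$ is a $\T_\delta\T_\e$-interleaving of $F$ and $G$.

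To conclude, I convert this $\T_\delta\T_\e$-interleaving into a $\T_{\delta+\e}$-interleaving using property~(4) of Def.~\ref{defn:flow}, which guarantees a natural transformation $\T_\delta\T_\e \leq \T_{\delta+\e}$. Post-composing with $G[\T_\delta\T_\e p \leq \T_{\delta+\e} p]$ and $F[\T_\delta\T_\e p \leq \T_{\delta+\e} p]$ respectively yields natural transformations $\Phi'\colon F \Rightarrow G\T_{\delta+\e}$ and $\Psi'\colon G \Rightarrow F\T_{\delta+\e}$; a short diagram chase using the naturality of $\Psi$ (resp.~$\Phi$) together with Cor.~\ref{coro_split} shows that the interleaving identities for $(\Phi',\Psi')$ at parameter $\delta+\e$ follow from those for $(\Phi,\Psi)$ at parameter $\delta\e$ composed with the transition morphisms. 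Thus $d_I(F,G) \leq \delta + \e$. Taking the infimum over all $\delta > L(\phi,\psi)$ gives $d_I(F,G) \leq \e + L(\phi,\psi)$.

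The only non-bookkeeping step is the conversion from a $\T_\delta\T_\e$-interleaving to a $\T_{\delta+\e}$-interleaving, which I expect to be the main obstacle, since it requires carefully verifying that post-composition with the flow comparison morphisms preserves the triangle identities. Everything else is a direct packaging of Lemma~\ref{lemma_de-interleaving} and the observation that a zero-loss assignment is an interleaving by definition.
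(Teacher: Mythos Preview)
Your proposal is correct and follows essentially the same route as the paper: pick $\delta > L(\phi,\psi)$, invoke Lemma~\ref{lemma_de-interleaving} to obtain a $\T_\delta\T_\e$-interleaving, pass to a $\T_{\delta+\e}$-interleaving, and take the infimum over $\delta$. The only difference is cosmetic: what you spell out as post-composition with the flow comparison morphisms from property~(4) of Def.~\ref{defn:flow} is exactly what the paper summarizes in one phrase as ``monotonicity of the interleaving,'' and your parenthetical handling of the degenerate case $\T_\e p = \T_\delta\T_\e p$ is a nice detail the paper leaves implicit.
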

 \begin{proof}
    Set $\delta > L(\psi, \phi)$. By Lemma \ref{lemma_de-interleaving} the $\T_\delta \T_\e$-assignment $(\Phi, \Psi)$ given by  $\Phi = G[\T_\delta ] \circ \phi$  and $ \Psi = F[\T_\delta] \circ \psi $ is a $\T_\delta \T_\e$-interleaving of $F$ and $G$. By monotonicity of the interleaving we have that $F$ and $G$ are $\T_{\delta +\e}$-interleaved. Therefore, as $\delta$ is arbitrary, we have that  $d_I(F,G) \leq \e + L(\psi, \phi)$ as we wanted.
\end{proof}

\begin{example}
\label{example:InterleavingBound}
    Take $F,G$  and $\T$ from Example and \ref{example_F_G}. 
    For $\e = 1$, we calculated $L(\phi,\psi) = 1$, so by Theorem \ref{th_bounding} we obtain that  $d_I(F,G) \leq 2$.
\end{example}

We note that this theorem makes no promise of tightness, and indeed we would not expect that. 
Instead, it can be viewed as providing a bound for any input assignment, and then the goal would be to find an assignment providing the best possible bound under some circumstances. 
However, we leave that particular direction to future work and instead focus on determining what is necessary to compute $L(\phi,\psi)$ for a given input assignment.

\section{Reducing calculations}

\label{sec_improve}

This section is devoted to creating algorithms and strategies to calculate  $L(\phi,\psi)$ using different properties of the distance function and by narrowing $P$ and $\C$ to particular cases largely of interest to the TDA community. 
Throughout the section, we assume that $F,G \in \C^P$, $\T$ is a commutative flow and and $(\phi,\psi)$ is a $\mathcal{S}$-assignment between $F$ and $G$, unless said explicitly otherwise. 

\subsection{Improving the calculation of the merging distance function}

\begin{definition}
     For a fixed $q \in P$, we say that $\delta \in [0,\infty)$ is a \emph{reducing constant} of $q$ on $F$ if for all 
     $0 \leq \gamma_1 < \delta \leq \gamma_2$, with $\gamma_1 \neq \gamma_2$, 
     $F[\T_{\gamma_1} q \leq \T_{\gamma_2} q]$ is non-monomorphism. 
     Otherwise, we say that $\delta$ is a \emph{non-reducing constant} of $q$ on $F$.  
\end{definition}

For $q \in P$, we write  $D^F_q\subset \R_{\geq 0}$ for the set of reducing constants of $q$ on $F$. 

\begin{example}
\label{example:reducing_Constants}
Continuing with  the flow $\T$ and the module $F$ of Example \ref{example_F_G},  consider the point $q=1\in \N$. 
We will show that the set of reducing constants of $1$ on $F$ is $D_1^F = \{2\}$.
Note that for $0<\gamma_1 < 2 $ we have $\T_{\gamma_1}(1) < 2$. 
Meanwhile, for $\gamma_2 \geq 2$ we obtain $\T_{\gamma_2}(1) \geq 3$. 
Then for such values, 
$$
F[\T_{\gamma_1}(1) \leq \T_{\gamma_2} (1)]
= F[1 + \floor{\gamma_1} \leq 2 ] \circ F[2 \leq 3] \circ F[3 \leq 1 + \floor{\gamma_2}] 
$$  
Hence, this morphism is not a monomorphism given that  $F[2 \leq 3]$ is not injective. Moreover, note that this in the only non injective morphism of the form $F[i \leq i+1]$, and so there are no other elements in $D_1^F$ for this example. 
\end{example}

Our next lemma shows that the the distance can only take values in this set of reducing constants.

\begin{lemma}
    \label{lemma_reducing_constants}
    Fix $q \in P$. 
    If  $D^F_q$ is finite,  then for any  $a \neq b$ in $F(q)$, $d_q^F(a,b) \in D^F_q \cup \{\infty \}$.
\end{lemma}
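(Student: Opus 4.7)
The plan is to prove the nontrivial case where $\delta := d_q^F(a,b)$ is finite, and to verify directly that $\delta$ satisfies the defining condition of a reducing constant; the $\infty$ case is vacuous. To this end, I would fix an arbitrary pair $0 \le \gamma_1 < \delta \le \gamma_2$ with $\gamma_1 \ne \gamma_2$ and produce two distinct elements of $F(\T_{\gamma_1} q)$ with the same image under $F[\T_{\gamma_1} q \le \T_{\gamma_2} q]$, thereby showing this morphism is not a monomorphism.

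The natural candidates are $a_1 := F[q \le \T_{\gamma_1} q](a)$ and $b_1 := F[q \le \T_{\gamma_1} q](b)$. First, I would observe that $a_1 \ne b_1$: since $\gamma_1 < \delta$ lies strictly below the infimum defining $\delta$, the images of $a$ and $b$ under $F[q \le \T_{\gamma_1} q]$ are not yet equal. Next, under the assumption $\gamma_2 > \delta$, Corollary~\ref{coro_slide} gives $F[q \le \T_{\gamma_2} q](a) = F[q \le \T_{\gamma_2} q](b)$, and the factorization $F[q \le \T_{\gamma_2} q] = F[\T_{\gamma_1} q \le \T_{\gamma_2} q] \circ F[q \le \T_{\gamma_1} q]$ from Corollary~\ref{coro_split} forces $F[\T_{\gamma_1} q \le \T_{\gamma_2} q]$ to identify the distinct $a_1$ and $b_1$, so it fails to be a monomorphism.

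The hard part will be the boundary subcase $\gamma_2 = \delta$, which covers the situation where the infimum defining $\delta$ is not attained. Here the elementary argument above does not close, since we may still have $a_2 \ne b_2$. My plan is to argue by contradiction using the finiteness of $D_q^F$: if $F[\T_{\gamma_1} q \le \T_\delta q]$ were a monomorphism, then factoring $F[\T_{\gamma_1} q \le \T_{\delta+\eta} q] = F[\T_\delta q \le \T_{\delta+\eta} q] \circ F[\T_{\gamma_1} q \le \T_\delta q]$ for $\eta > 0$ and combining with non-monicity of the composite (from the previous paragraph applied with $\gamma_2 = \delta + \eta$) would force $F[\T_\delta q \le \T_{\delta+\eta} q]$ to be non-mono for every $\eta > 0$. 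I would then try to extract from this an infinite family of reducing constants accumulating at $\delta$ from above, contradicting the finiteness of $D_q^F$. A cleaner alternative would be to first prove, as a preliminary structural lemma, that the finiteness of $D_q^F$ already guarantees the infimum defining $d_q^F(a,b)$ is attained, immediately collapsing the $\gamma_2 = \delta$ case into the elementary one.
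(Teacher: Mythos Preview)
Your approach is genuinely different from the paper's. The paper argues by contradiction: it places $d_q^F(a,b)$ strictly between two consecutive elements $\delta_{i-1} < \delta_i$ of $D_q^F$ (or above the largest) and then uses that every value in that gap is \emph{non}-reducing, so that for each such $\delta$ there is some $\gamma < \delta$ with $F[\T_\gamma q \le \T_\delta q]$ a monomorphism; this lets it push the level at which $a$ and $b$ agree strictly below $d_q^F(a,b)$. You instead try to verify directly that $\delta = d_q^F(a,b)$ satisfies the defining property of a reducing constant. Your treatment of the case $\gamma_2 > \delta$ is clean and correct.

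The gap is in the boundary case $\gamma_2 = \delta$, and neither of your two plans closes it. In Approach~A, the deduction that $F[\T_\delta q \le \T_{\delta+\eta} q]$ is non-mono for every $\eta > 0$ is fine, but this does \emph{not} produce any reducing constant $\delta' > \delta$: to certify that such a $\delta'$ is reducing you must show $F[\T_{\gamma_1'} q \le \T_{\gamma_2'} q]$ is non-mono for all $\gamma_1' < \delta' \le \gamma_2'$, in particular for $\gamma_1' \in (\delta,\delta')$; but for those $\gamma_1'$ the images of $a$ and $b$ in $F(\T_{\gamma_1'} q)$ already coincide (Corollary~\ref{coro_slide}), so they cannot witness non-monicity, and nothing else in your setup provides witnesses. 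Approach~B is only a hope---you give no argument that finiteness of $D_q^F$ forces the infimum to be attained, and there is no evident mechanism for this from the direct-verification side. The paper's route avoids this boundary difficulty precisely because it exploits the non-reducing property of \emph{every} value in the gap $(\delta_{i-1},\delta_i)$, which manufactures monomorphisms $F[\T_\gamma q \le \T_\delta q]$ with $\gamma$ possibly below $d_q^F(a,b)$; your direct approach never accesses this leverage.
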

\begin{proof}
Let $D :=D_q^F = \{ \tuple{\delta}{m}\}$ be sorted so that $\delta_i < \delta_{i+1}$. 
Seeking a contradiction, say there is a pair $a\neq b$ in $F(q)$ and an  $i$ with  
$\delta_{i-1} < d_q^F(a,b) < \delta_i$ (setting $\delta_0 = 0$ if need be). 
Then, for  any $\delta$ with $d_q^F(a,b) \leq \delta < \delta_{i}$, we have that 
$ F[q \leq \T_{\delta} q](a) =  F[q \leq \T_{\delta} q](b)$. 
Since $\delta \notin D$, choose $0 < \gamma < \delta$ such that $F[\T_{\gamma} q \leq \T_{\delta} q]$ is a monomorphism. 
Recall that $F[q \leq \T_{\delta} q] = F[\T_{\gamma} q \leq \T_{\delta} q] \circ F[q \leq \T_{\gamma} q] $. 
This implies that $F[q \leq \T_{\gamma} q](a) =  F[q \leq \T_{\gamma} q](b)$, then $d_q^F(a,b) \leq \gamma $. Since $\delta$ was arbitrary we obtain that   
$$
F[q \leq \T_{d_q^F(a,b)} q](a) =  F[q \leq \T_{d_q^F(a,b)} q](b).
$$ 
Then, using a similar analysis for $\delta = d_q^F(a,b) < \delta_i$ we can find $\gamma < d_q^F(a,b)$ with $F[q \leq \T_{\gamma} q](a) =  F[q \leq \T_{\gamma} q](b)$, which is a contradiction. Hence, having a $\delta$ between $d_q^F(a,b)$ and $\delta_i$ yields a contradiction. Therefore,  $ \delta_i = d_q^F(a,b)$.

Now, consider the case where $ \delta_n \leq  d_q^F(a,b)$. 
If $d_q^F(a,b) = \infty$ we are done. 
If not, note that for 
$ \delta_n < \delta < d_q^F(a,b)$ 
we can find $\delta < \gamma \leq d_q^F(a,b)$ where $F[\T_{\delta} q \leq \T_{\gamma} q]$ is a monomorphism. 
By a similar process, we can obtain a $\gamma > d_q^F(a,b)$ with 
$F[q \leq \T_{\gamma} q](a) \neq  F[q \leq \T_{\gamma} q](b)$, which is a contradiction. 
Hence $\delta_n = d_q^F(a,b)$, finishing the proof.
\end{proof}

With this lemma, we have the following immediate theorem, which can limit the possible values of the loss function. 

\begin{theorem}
    \label{thm_finite_reducing}
    The following hold:
    \begin{enumerate}[1),nosep]
        \item  If $q \in P$ is such that  $D^G_{ \cS q}$ is finite, then for any $p \leq q$, $\Lpl^{p,q}(\phi) \in D^G_{ \cS q} \cup \{\infty\}$. 
        
        \item If $q \in P$ is such that  $D^F_{ \cS \cS  q}$ is finite, then $\Ltd^{q}(\phi,\psi) \in D^F_{ \cS \cS   q} \cup \{\infty\} $.
        
        \item If $q \in P$ is such that  $D^F_{ \cS q}$ is finite, then for any $p \leq q$, $\Lpr^{p,q}(\phi) \in D^F_{ \cS q} \cup \{\infty\}$. 
        
        \item If $q \in P$ is such that  $D^G_{ \cS \cS  q}$ is finite, then $\Ltu^{q}(\phi,\psi) \in D^G_{ \cS \cS  q} \cup \{\infty\} $.
    \end{enumerate}
\end{theorem}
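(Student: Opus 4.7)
The proof is a direct consequence of Lemma \ref{lemma_reducing_constants} together with the observation that each of the four losses is a supremum of merging distances evaluated in a single object of $C$, so each individual value is constrained to lie in a finite set. Because statements (1)--(3) and statements (2)--(4) are related by swapping the roles of $F$ and $G$ (equivalently, of $\phi$ and $\psi$), I would only present the arguments for (1) and (2).

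For statement (1), I would begin by unpacking the definition of $\Lpl^{p,q}(\phi)$ to obtain
$$\Lpl^{p,q}(\phi) = \sup_{a \in F(p)} d_{\cS q}^G\bigl(\phi_q \circ F[p \leq q](a),\; G[\cS p \leq \cS q] \circ \phi_p(a)\bigr).$$
Each term in this supremum is a merging distance in $G(\cS q)$. Applying Lemma \ref{lemma_reducing_constants} at the point $\cS q$ (which is permitted precisely because $D^G_{\cS q}$ is finite), every such term with distinct arguments lies in $D^G_{\cS q} \cup \{\infty\}$; terms with coinciding arguments contribute the value $0$, which is itself a reducing constant because the defining condition $0 \leq \gamma_1 < 0$ is vacuous. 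Thus the set of values being taken over is a subset of the finite set $D^G_{\cS q} \cup \{\infty\}$, and the supremum of any subset of a finite totally ordered subset of $[0,\infty]$ is attained and belongs to that set.

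For statement (2), I would repeat the same template with source $F(q)$ and target $F(\cS \cS q)$, writing
$$\Ltd^{q}(\phi,\psi) = \sup_{a \in F(q)} d_{\cS \cS q}^F\bigl(F[q \leq \cS \cS q](a),\; \psi_{\cS q} \circ \phi_q(a)\bigr)$$
and invoking Lemma \ref{lemma_reducing_constants} at the point $\cS \cS q$, using the finiteness of $D^F_{\cS \cS q}$ in exactly the same way. The conclusion for (3) and (4) is then obtained by symmetry.

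The main obstacle here is really only bookkeeping: identifying, for each of the four diagrams, the specific object of $C$ at which the merging distance is measured, and then confirming that this object coincides with the one named in the corresponding clause of the statement. All of the substantive work has been done by Lemma \ref{lemma_reducing_constants}; the only additional ingredient is the trivial observation that the supremum of a subset of a finite set is a maximum lying in that set.
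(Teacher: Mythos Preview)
Your proposal is correct and matches the paper's own treatment: the paper simply declares the theorem to be ``immediate'' from Lemma~\ref{lemma_reducing_constants} and gives no further argument, whereas you have filled in the routine details (unpacking each loss as a supremum of merging distances at the relevant point, invoking the lemma, and noting that $0$ is vacuously a reducing constant so the supremum of a subset of a finite set in $[0,\infty]$ lands back in that set). Nothing is missing.
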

Note that in many cases we are interested in (such as $q$-tame persistence modules in $\Vec^P$; or when $P$ is a finite set; or when the underlying sets of objects of $\C$ are finite) the set of reducing constants will always be finite. 

\subsubsection{Calculating loss in FC categories}

We will use Thm.~\ref{thm_finite_reducing} to describe different algorithms to calculate $\Lpl^{p,q}(\phi)$ where $p$ and $q$ are fixed. 
We assume throughout this section that $C$ is a concrete category where the underlying sets of objects are finite; that is, an FC category. 
With these assumptions, we can present an algorithm to compute $\Lpl^{p,q}(\phi)$, and note that the algorithm to compute $\Lpr^{p,q}(\psi)$, $\Ltd^p(\phi,\psi )$ and $\Ltu^p(\phi,\psi )$ are similar.

In what follows, we fix  $p < q \in P$ and write $D^G_{ \cS q} = \{ \tuple{\delta}{m}\}$. 
We fix a set of interleaved values $\{\gamma_i\}_{i=0}^m\subseteq \R_{\geq 0}$ with 
$0 = \gamma_0 <\delta_0 <\gamma_1 < \cdots <\delta_i < \gamma_i < \delta_{i+1} < \cdots \delta_m < \gamma_m$. 
To simplify notation in the algorithm, we write the composition morphisms  on the two sides of the parallelogram by 
$f = \phi_q \circ  F[p \leq q]$ 
and
$g = G[ \cS p \leq \cS q] \circ \phi_p$, so we have 
$\Lpl^{p,q}(\phi) = d(f,g)$.
The idea is based on a binary search (See e.g.~\cite{Erickson2019}), where at each step checking $\gamma_i$, we only keep the elements of $F(p)$ which do not map to the same element at the end of diagram $\Parallelograml_{p,q}(\phi)$ in $G(\T_{\delta_i} \cS q) $.

\begin{algorithm}[ht] %
\caption[Algorithm to compute Lpq]{Algorithm to compute $\protect\Lpl^{p,q}(\phi)$ in finite categories}
\label{alg:computation}

\begin{algorithmic}[1]
    \State \textbf{Given:} $D_{\cS q}^G = \{\delta_1 < \cdots < \delta_m \}$ with interleaved $\{\gamma_i\}_{i=0}^m$
    \State Set $L \gets 0$, $H \gets m+1$, and $A \gets F(p)$.
    \While{$L \neq H$}
        \State $i \gets \lfloor\frac{H+L}{2}\rfloor$
        \State $A' \gets \left\{a \in A \mid G[ \cS q < \T_{\gamma_i} \cS q ](f(a)) \neq G[ \cS q < \T_{\gamma_i}  \cS q ](g(a))\right\}$ 
        \If{$A' = \emptyset$}
            \State $H \gets i$
        \Else
            \State $L \gets i+1$
            \State $A \gets A'$
        \EndIf
    \EndWhile
    \If{$0 < L \leq m$}
        \State \Return $\delta_L$ \Comment{$\Lpl^{p,q}(\phi) = \delta_L$}
    \ElsIf{$L = 0$}
        \State \Return $0$ \Comment{$\Lpl^{p,q}(\phi) = 0$}
    \Else
        \State \Return $\infty$ \Comment{$\Lpl^{p,q}(\phi) = \infty$, as $L=m+1$}
    \EndIf
\end{algorithmic}
\end{algorithm}

The basic idea is to do a binary search on the set $D_{\cS q}^G$ to find the lowest $\delta_i$ for which all $ a \in F(p)$ have the same images $f(a)$ and $g(a)$ in $G(\T_{\delta_i} \cS q)$.
This means that the time complexity of calculating $\Lpl^{p,q}(\phi)$ is $O(n \log(m))$ where $n = |F(p)|$ and $m = |D_{\cS q}^G|$.

Similar computations show that the complexity of determining $\Ltd^p(\phi,\psi )$ is 
$O( |F(p)| \log |D_{\cS \cS p}^F|)$, for $\Lpr^{p,q}(\psi )$ is 
$O( |F(p)| \log |D_{\cS q}^F|)$ and for $\Ltu^p(\phi,\psi )$ is 
$O( |F(p)| \log |D_{\cS \cS p}^G|)$.

\subsubsection{Calculating the loss for vector fields}
We now turn to the case where there is additional structure on $\C$ to be used, in particular, the case where $\C = \Vec_\F$ for some field $\F$. 
For this portion, consider 
$F,G \in \Vec_\F^P$ to be $q$-tame modules.
As before, we we fix  $p < q \in P$ and write $D^G_{ \cS q} = \{ \tuple{\delta}{m}\}$ with interleaved values $\{\gamma_i\}_{i=0}^m\subseteq \R_{\geq 0}$. 

Recall that for any linear transformations, $h,h': V \to W$, we have that  $h(v) = h'(v)$ if and only if $v \in \ker(h -h')$.  Further, $\ker(h - h') = V$ if and only if $\im(h - h') = \{0\}$. 

The linear maps $f$ and $g$ are identical to those from the previous section. The following algorithm find the loss of the diagram based again on binary search. This time, it checks  $\gamma_i$ to find the lowest $\delta_i$ such that \mbox{$G[\cS q \leq  \T_{\delta_i} \cS q](Im(f-g)) = \{0\}$}.

\begin{algorithm}[ht]
\caption[Algorithm to compute Lpq in VecF]{Algorithm to compute $\protect\Lpl^{p,q}(\phi)$ in $\Vec_{\F}$}
\label{alg:lpl_calculation}
\begin{algorithmic}[1]
    \State Set $V \gets \im ( f - g ) \subseteq G( \cS q)$.
    \If{$V = \{0\}$}
        \State \Return $0$ \Comment{$\Lpl^{p,q}(\phi) = 0$}
    \Else
        \State Set $L \gets 1$, $H \gets m+1$
    \EndIf
    
    \While{$L \neq H$}
        \State $i \gets \lfloor\frac{H+L}{2}\rfloor$
        \State $V' \gets \im (M |_V)$ for linear map $M = G[\T_{\gamma_{L-1}} \cS q < \T_{\gamma_i} \cS q ]$
        \If{$V' = \{0\}$}
            \State $H \gets i$
        \Else
            \State $L \gets i+1$
            \State $V \gets V'$
        \EndIf
    \EndWhile
    
    \If{$L \leq m$}
        \State \Return $\delta_L$ \Comment{$\Lpl^{p,q}(\phi) = \delta_L$}
    \Else
        \State \Return $\infty$ \Comment{$\Lpl^{p,q}(\phi) = \infty$, as $L=m+1$}
    \EndIf
\end{algorithmic}
\end{algorithm}

For this case, the  time complexity of calculating $\Lpl^{p,q}(\phi)$ is $O(n^3\log(m))$ where $n = \rk(f - g)$ as we are searching the lowest $\delta_i$ such that $Im(f-g)$ is map to 0 in $G(\T_{\delta_i} \cS q)$.
  
As before, we can use a similar methodology  to calculate $\Lpr^{p,q}(\psi)$, $\Ltd^p(\phi,\psi )$ and $\Ltu^p(\phi,\psi )$ using the corresponding restrictions of the reducing constants of $ \cS q$ on $F$ and $ \cS \cS p$ on $F$ and $G$ respectively.

\subsection{Reducing the number of diagrams}

In the last section we optimized the calculation of $\Lpl^{p,q}(\phi)$, $\Lpr^{p,q}(\psi)$, $\Ltd^p(\phi,\psi )$ and $\Ltu^p(\phi,\psi )$ for fixed $p$ and $q$ under certain conditions. 
Next, we will reduce the number of diagrams we need to check to calculate $L(\phi,\psi)$.

Given $p < r < q$ in $P$, we begin by establishing bounds on the parallelogram loss corresponding to different pairs of elements.
\begin{lemma}
    \label{lemma_lll}
    Let $p < r < q$ in $P$. Then the following holds:
    \begin{enumerate}[1)]
        \item \begin{equation*}
        \Lpl^{p,q}(\phi) \leq \max \left\{ \Lpl^{p,r}(\phi) , \Lpl^{r,q}(\phi) \right\}.
    \end{equation*}
    \item If $F[p \leq r]$ is an isomorphism, then 
    \begin{equation*}
        \Lpl^{r,q}(\phi) \leq \max \left\{ \Lpl^{p,r}(\phi) , \Lpl^{p,q}(\phi) \right\}.
    \end{equation*}
    \item If $\T$ is an Archimedean flow (Defn.~\ref{def:Archimedean}), there is $\delta > 0$ such that  
    \begin{equation*}
        \Lpl^{p,r}(\phi) \leq \max \left\{ \Lpl^{p,q}(\phi) , \Lpl^{r,q}(\phi) \right\} + \delta
    \end{equation*}
    \end{enumerate}
 \vspace{-1em}
 \end{lemma}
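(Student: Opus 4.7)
The plan is to prove all three inequalities using a common strategy: insert an intermediate expression between the two morphisms being compared, invoke the ultrametric inequality for the merging distance (Thm.~\ref{th_ultrametric}), and exploit the monotonicity of the merging distance under transition morphisms (Lem.~\ref{lemma_preimage}, and, for (3), the Archimedean refinement Lem.~\ref{lemma_preimage_Archimedean}).

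For statement (1), fix $a \in F(p)$ and introduce the intermediate $Z := G[\cS r \leq \cS q] \circ \phi_r \circ F[p \leq r](a) \in G(\cS q)$. Writing $X := \phi_q \circ F[p \leq q](a)$ and $Y := G[\cS p \leq \cS q] \circ \phi_p(a)$, Cor.~\ref{coro_split} shows that $d_{\cS q}^G(X, Z)$ is precisely an instance of the parallelogram loss at $(r, q)$ evaluated on $b := F[p \leq r](a) \in F(r)$, and so is bounded by $\Lpl^{r,q}(\phi)$. For $d_{\cS q}^G(Z, Y)$, both elements arise as images under $G[\cS r \leq \cS q]$ of elements in $G(\cS r)$ whose merging distance is bounded by $\Lpl^{p,r}(\phi)$; Lem.~\ref{lemma_preimage} says pushforward does not increase merging distance, so $d_{\cS q}^G(Z, Y) \leq \Lpl^{p,r}(\phi)$. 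The ultrametric inequality combines the two, and taking supremum over $a$ yields the claim.

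For statement (2), recycle the same three points $X, Y, Z$, but this time view $d_{\cS q}^G(X, Z)$ as the parallelogram at $(r, q)$ evaluated on $b := F[p \leq r](a) \in F(r)$. Because $F[p \leq r]$ is an isomorphism, $b$ ranges over all of $F(r)$ as $a$ ranges over $F(p)$, so the supremum over $b$ coincides with the supremum over $a$. The ultrametric decomposition through the intermediate $Y$ now gives $d_{\cS q}^G(X, Z) \leq \max\{\Lpl^{p,q}(\phi),\, \Lpl^{p,r}(\phi)\}$, where the second term again uses Lem.~\ref{lemma_preimage} to push the $G(\cS r)$-distance forward.

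Statement (3) is the main obstacle: it seeks a bound on a loss at level $\cS r$ in terms of losses at level $\cS q$, so the pushforward monotonicity of Lem.~\ref{lemma_preimage} runs the wrong way. The Archimedean hypothesis enters through Lem.~\ref{lemma_preimage_Archimedean}, which bounds $d^G_{\cS r}$ by $d^G_{\cS q}$ at the cost of an additive constant. Concretely, for $a \in F(p)$ set $u := \phi_r \circ F[p \leq r](a)$ and $v := G[\cS p \leq \cS r] \circ \phi_p(a)$ in $G(\cS r)$; the same ultrametric argument as in (1) bounds the distance of $(G[\cS r \leq \cS q] u,\, G[\cS r \leq \cS q] v)$ in $G(\cS q)$ by $M := \max\{\Lpl^{p,q}(\phi),\, \Lpl^{r,q}(\phi)\}$. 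Lem.~\ref{lemma_preimage_Archimedean} then yields $\delta$ with $d^G_{\cS r}(u, v) \leq d^G_{\cS q}(G[\cS r \leq \cS q]u,\, G[\cS r \leq \cS q]v) + \delta$. The delicate point is the uniformity of $\delta$ in $a$; inspecting the proof of Lem.~\ref{lemma_preimage_Archimedean}, the constant is determined by any $\hat\delta$ with $\T_{\hat\delta} \cS r \geq \T_\gamma \cS q$ for a fixed $\gamma > M$, so one such $\delta$ works simultaneously for all $a$ when $M < \infty$; the case $M = \infty$ is trivial. Taking supremum over $a$ yields the bound.
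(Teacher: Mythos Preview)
Your proof is correct and follows essentially the same route as the paper: the same intermediate element (the paper's $c'$ is your $Z$), the same use of the ultrametric inequality (Thm.~\ref{th_ultrametric}), and the same appeal to Lem.~\ref{lemma_preimage} for (1)--(2) and Lem.~\ref{lemma_preimage_Archimedean} for (3). One point worth noting is that you explicitly address the uniformity of $\delta$ in $a$ for statement (3), observing that the Archimedean constant depends only on $\cS r$, $\cS q$, and a fixed $\gamma > M$; the paper passes over this issue silently when taking the supremum, so your version is in fact slightly more complete.
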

 
 \begin{proof}
    For $a \in F(p)$ consider the following elements:
\[
\adjustbox{scale=.9,center}{
\begin{tikzcd}[cramped]
    F(p) & F(r) & F(q) \\
    & G(\cS p) & G(\cS r)  & G(\cS q) 
	\arrow["{F[p \leq r]}",  from=1-1, to=1-2]
	\arrow["{\phi_p}"',  from=1-1, to=2-2]
	\arrow["{F[r \leq q]}", from=1-2, to=1-3]
	\arrow["{\phi_r}",  from=1-2, to=2-3]
	\arrow["{\phi_q}"',  from=1-3, to=2-4]
	\arrow["{G[ \; \leq \; ]}"', shift right=2,  from=2-2, to=2-3]
    \arrow["{G[ \; \leq \; ]}"', shift right=2,  from=2-3, to=2-4]
\end{tikzcd}
\qquad
\begin{tikzcd}[cramped]
    a && {a'} && \bullet \\
    & \bullet & {} & \begin{array}{c} b' \\ b \end{array} && \begin{array}{c} c'' \\ c' \\ c \end{array}
	\arrow["{F[p \leq r]}", maps to, from=1-1, to=1-3]
	\arrow["{\phi_p}"', maps to, from=1-1, to=2-2]
	\arrow["{F[r \leq q]}", from=1-3, to=1-5]
	\arrow["{\phi_r}", maps to, from=1-3, to=2-4]
	\arrow["{\phi_q}"', maps to, from=1-5, to=2-6]
	\arrow["{G[ \cS p \leq  \cS r]}"', shift right=2, maps to, from=2-2, to=2-4]
	\arrow[ shift left=2, bend right=22,looseness=0.8, maps to, from=2-4, to=2-6]
    \arrow["{G[\cS  r \leq \cS q]}"', shift right=4, curve={height=6pt}, maps to, from=2-4, to=2-6]
\end{tikzcd}
}
\]
By Thm.~\ref{th_ultrametric} we know that $d^G_{\cS q}$, then,
$
    d_{ \cS q}^G (c,c'') \leq \max \left\{  d_{ \cS q}^G (c,c') ,  d_{ \cS q}^G (c',c'')  \right\}.
$
One one hand the rightmost square is $\Parallelograml_\phi(r,q)$, so the we have that  $d_{ \cS q}^G (c',c'') \leq \Lpl^{r,q}(\phi)$. 
On the other hand, by Lemma \ref{lemma_preimage}, 
$d_{ \cS q}^G (c,c') \leq  d_{\cS r}^G (b,b')$, and because the left square is $\Parallelograml_\phi(p,r)$ implying 
$d_{\cS r}^G (b,b') \leq  \Lpl^{p,r}(\phi)$, we have
$d_{ \cS q}^G (c,c') \leq \Lpl^{p,r}(\phi) $.
Putting these together, we have 
\begin{equation*}
    d_{\cS q}^G (c,c'') \leq \max \left\{  \Lpl^{r,q}(\phi)  ,  \Lpl^{p,r}(\phi) \right\}
\end{equation*}
and by definition of supremum, statement 1) follows. 

For statement 2), assume $F[p \leq r]$ is an isomorphism and consider $a' \in F(r)$. 
Choose $a = F[p \leq r]^{-1}(a')$. 
By the same reasoning as above the conclusion follows because we have that
$
    d_{\cS q}^G (c',c'') \leq \max \left\{  d_{\cS q}^G (c',c) ,  d_{\cS q}^G (c,c'')  \right\} 
$

Finally for statement 3), assume that $\T$ is an Archimedean flow. Since $c = G[\cS r \leq \cS q](b)$ and $c' = G[\cS r \leq \cS q](b')$ by Lemma \ref{lemma_preimage_Archimedean} there is $\delta$ such that $d_{\cS r}^G (b,b') < d_{ \cS q}^G (c,c') + \delta$.  Hence, because $d^G_{\cS q}$ is an ultrametric,
\vspace{-1pt}
\[d_{\cS r}^G (b,b') \leq \max \left\{  d_{\cS q}^G (c,c'') ,  d_{\cS q}^G (c'',c')  \right\} + \delta \] 

It implies that  
$
    d_{\cS r}^G (b,b') \leq \max \left\{ \Lpl^{p,q}(\phi)  ,  \Lpl^{r,q}(\phi) \right\}  + \delta 
$. Hence the statement 3) follows.
\end{proof}

To obtain inequalities that involve the triangle diagram we need a specific properties of the flow

\begin{lemma}
    \label{lemma_ll_triang}
    Let $p < q$ in $P$ such that $\Lpl^{p,q}(\phi) = 0$ and  $\Lpr^{ p, q}(\psi) = 0$. 
    \begin{enumerate}[(1)]
        \item 
    If $F[p \leq q]$ is isomorphism, we have that
    \begin{equation*}
        \Ltd^q(\phi,\psi) \leq \Ltd^p(\phi,\psi).
    \end{equation*}
    \item 
    If $\T$ is an Archimedean flow (Defn.~\ref{def:Archimedean}), there is $\delta > 0$ such that  
    \begin{equation*}
        \Ltd^p(\phi,\psi) <\Ltd^q(\phi,\psi) + \delta.
    \end{equation*}
    \end{enumerate}
    \end{lemma}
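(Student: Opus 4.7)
The plan is to transfer the triangle bound between $p$ and $q$ by lifting along the transition map $F[p \leq q]$, using the parallelogram conditions to commute $\phi$ and $\psi$ past that transition. For part (1), assume $F[p \leq q]$ is an isomorphism, pick a generic $a' \in F(q)$, and set $a = F[p \leq q]^{-1}(a')$. The first element of $\triangled_{\phi,\psi}(q)$ at $a'$ rewrites as
\[ F[q \leq \cS\cS q](a') \;=\; F[\cS\cS p \leq \cS\cS q] \circ F[p \leq \cS\cS p](a), \]
which is the image under $F[\cS\cS p \leq \cS\cS q]$ of the first element of $\triangled_{\phi,\psi}(p)$ at $a$. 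The second element is $\psi_{\cS q} \circ \phi_q \circ F[p \leq q](a)$; using $\Lpl^{p,q}(\phi) = 0$ and $\Lpr^{p,q}(\psi) = 0$ together with Corollary~\ref{coro_slide}, I aim to show this agrees (up to merging-distance zero in $F(\cS\cS q)$) with $F[\cS\cS p \leq \cS\cS q] \circ \psi_{\cS p} \circ \phi_p(a)$, the image of the second element of $\triangled_{\phi,\psi}(p)$ at $a$. Then Lemma~\ref{lemma_preimage} bounds the $d^F_{\cS\cS q}$-distance between these two images by the $d^F_{\cS\cS p}$-distance between the originals, which is at most $\Ltd^p(\phi,\psi)$; taking supremum over $a'$ gives (1).

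For part (2), I take $a \in F(p)$, set $a' = F[p \leq q](a)$, and push the two elements of $\triangled_{\phi,\psi}(p)$ at $a$ forward under $F[\cS\cS p \leq \cS\cS q]$; the same commutation identifies these images with the two elements of $\triangled_{\phi,\psi}(q)$ at $a'$, again up to merging distance zero. The resulting inequality goes the wrong direction (a bound on $d^F_{\cS\cS q}$ is produced, but I want a bound on $d^F_{\cS\cS p}$), so I appeal to Lemma~\ref{lemma_preimage_Archimedean}: under the Archimedean hypothesis, there is a $\delta > 0$ with $d^F_{\cS\cS p}(x,y) < d^F_{\cS\cS q}(F[\cS\cS p \leq \cS\cS q]\,x,\,F[\cS\cS p \leq \cS\cS q]\,y) + \delta$. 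Assembling this with the commutation step and taking supremum over $a \in F(p)$ yields $\Ltd^p(\phi,\psi) < \Ltd^q(\phi,\psi) + \delta$.

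The main obstacle is the commutation step: each of $\Lpl^{p,q}(\phi) = 0$ and $\Lpr^{p,q}(\psi) = 0$ only asserts agreement in a pseudometric, not as set-level elements, so one must be careful when composing with the outer $\psi_{\cS q}$ and the outer transition $F[\cS\cS p \leq \cS\cS q]$. I would handle this by using Corollary~\ref{coro_slide} to convert the merging-distance-zero relations into genuine equalities after flowing by a small amount, chain the resulting equalities together under the ultrametric triangle inequality for the merging distance (Theorem~\ref{th_ultrametric}), and then let the auxiliary flow parameter tend to zero using the infimum in the definition of $d^F_{\cS\cS q}$. Ordering the flow arguments so that the residual slack is absorbed into the final merging distance is the most delicate aspect of the proof.
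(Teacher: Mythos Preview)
Your overall plan coincides with the paper's: set up the ladder $F(p)\to F(q)\to F(\cS\cS p)\to F(\cS\cS q)$ over $G(\cS p)\to G(\cS q)$, identify the triangle pair at $q$ for $a'$ with the $F[\cS\cS p\leq\cS\cS q]$-image of the triangle pair at $p$ for $a$, and then invoke Lemma~\ref{lemma_preimage} for part~(1) and Lemma~\ref{lemma_preimage_Archimedean} for part~(2). The paper's element chase silently treats the loss-zero hypotheses as exact commutation of the parallelograms and does not address the pseudometric subtlety you rightly flag.

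Your proposed fix for that subtlety, however, does not go through as written. First, the $\psi$-square that actually appears in the ladder is $\Parallelogramr_\psi(\cS p,\cS q)$, so the vanishing you need is $\Lpr^{\cS p,\cS q}(\psi)=0$; the hypothesis $\Lpr^{p,q}(\psi)=0$ governs the components $\psi_p\colon G(p)\to F(\cS p)$ and $\psi_q\colon G(q)\to F(\cS q)$, which never enter either triangle diagram. Second, even granting the correct square, the maneuver ``flow by a small $\gamma$ to force equality, then apply $\psi_{\cS q}$'' fails: after flowing you land in $G(\T_\gamma\cS q)$, where the available assignment component is $\psi_{\T_\gamma\cS q}$, and for a mere assignment this bears no relation to $\psi_{\cS q}$. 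Consequently from $d^G_{\cS q}\bigl(\phi_q(a'),\,G[\cS p\leq\cS q]\phi_p(a)\bigr)=0$ you cannot bound $d^F_{\cS\cS q}\bigl(\psi_{\cS q}\phi_q(a'),\,\psi_{\cS q}G[\cS p\leq\cS q]\phi_p(a)\bigr)$, and the ultrametric chaining you outline has nothing to chain with. The paper's argument shares this gap; it becomes clean only when loss zero forces literal equality of the parallelogram composites, as is the case in the constructible settings of Section~\ref{ssec:Linear_orders_opt} where the lemma is actually applied.
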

 \begin{proof}
 Without loss of generality, assume that $ q \leq   \cS \cS p$.
 For the sake of brevity let $p'=  \cS \cS p$ and $q' =  \cS \cS  q$ and consider the following diagram:
    \[
    \adjustbox{scale=.8,center}{
    \begin{tikzcd}[cramped]
	F(p) && F(q) && F(p') && F(q') \\
	&& G( \cS p) && G( \cS q)
	\arrow["{F[p\leq q]}",  from=1-1, to=1-3]
	\arrow["{\phi_p}"'{pos=0.3},  from=1-1, to=2-3]
	\arrow["{F[ q \leq p' ]}",   from=1-3, to=1-5]
	\arrow["{\phi_q}"'{pos=0.3},   from=1-3, to=2-5]
	\arrow["{F[ p' \leq q' ]}",   from=1-5, to=1-7]
	\arrow["{\psi_{ p}}"'{pos=0.8},  from=2-3, to=1-5]
	\arrow[shorten <=5pt, from=2-3, to=2-5]
	\arrow["{\psi_{ q}}"'{pos=0.7},   from=2-5, to=1-7]
    \end{tikzcd}
    \qquad
    \begin{tikzcd}[cramped]
	a && a' && \begin{array}{c} \phantom{b} \\b \\ b' \end{array} && \begin{array}{c} \phantom{c} \\c \\ c' \end{array} \\
	&& \bullet && \bullet
	\arrow["{F[p\leq q]}", mapsto, from=1-1, to=1-3]
	\arrow["{\phi_p}"'{pos=0.3}, mapsto, from=1-1, to=2-3]
	\arrow["{F[ q \leq p' ]}", mapsto,  from=1-3, to=1-5]
	\arrow["{\phi_q}"'{pos=0.3}, mapsto,  from=1-3, to=2-5]
	\arrow["{F[ p' \leq q' ]}", mapsto,  from=1-5, to=1-7]
	\arrow[shift right=5, from=1-5, to=1-7]
	\arrow["{\psi_{ p}}"'{pos=0.8}, mapsto, shift right=2, from=2-3, to=1-5]
	\arrow[shorten <=5pt, from=2-3, to=2-5]
	\arrow["{\psi_{ q}}"'{pos=0.7}, mapsto, shift right=2, shorten >=5pt, from=2-5, to=1-7]
\end{tikzcd}
}
\]
Given $a' \in F(q)$, and because $F[p \leq q]$ is an isomorphism, we can choose $a = F[p \leq q]^{-1}(a')$.
Because the left triangle is $\triangled_{\phi,\psi}(p)$, we have that $d_{p'}^F (b',b) \leq \Ltd^p(\phi,\psi)$. 
In addition, $d_{q'}^F (c,c') \leq  d_{p'}^F (b',b)$ by Lemma \ref{lemma_preimage}, so 
$d_{q'}^F (c,c') \leq \Ltd^p(\phi,\psi)$. 
By definition of supremum it follows that $\Ltd^q(\phi,\psi) \leq \Ltd^p(\phi,\psi)$ finishing statement (1).
    
For statement (2), assume that $\T$ is an Archimedean flow and choose any $a \in F(p)$. By Lemma ~\ref{lemma_preimage_Archimedean} given that $c = F[p' \leq q'](b)$ and $c' = F[p' \leq q'](b')$ there is $\delta$ such that  $d_{p'}^F(b,b') < d_{q'}^F(c,c') + \delta $, which then implies that $ \Ltd^p(\phi,\psi) < \Ltd^q(\phi,\psi) + \delta$. 
\end{proof}

\begin{corollary}
\label{cor:LossDiscrete}
    If $P$ is discrete,
    \begin{equation*}
        L(\phi,\psi) = \sup_{p \prec q} \left\{  \Lpl^{p,q}(\phi),  \Ltd^q(\phi,\psi ), \Lpr^{p,q}(\psi) , \Ltu^q(\phi,\psi )\right\}.
    \end{equation*}
\end{corollary}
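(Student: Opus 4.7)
The plan is to prove the two inequalities. The direction $\sup_{p \prec q} \leq \sup_{p < q}$ is immediate because every adjacency $p \prec q$ is in particular a strict inequality $p < q$, so each term on the left appears as a term on the right. The real work is in the reverse inequality, which should follow from iterated application of the decomposition lemmas already proved.

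For the reverse direction, I would fix an arbitrary pair $p < q$ in $P$ and bound each of the four loss quantities $\Lpl^{p,q}(\phi)$, $\Lpr^{p,q}(\psi)$, $\Ltd^q(\phi,\psi)$, $\Ltu^q(\phi,\psi)$ by the supremum taken over adjacent pairs. Since $P$ is discrete, I would use the fact that there is a finite chain of covers $p = p_0 \prec p_1 \prec \cdots \prec p_n = q$ connecting $p$ to $q$. For the parallelogram terms, iterated application of Lemma \ref{lemma_lll}(1) gives
\begin{equation*}
\Lpl^{p,q}(\phi) \leq \max_{1 \leq i \leq n} \Lpl^{p_{i-1}, p_i}(\phi) \leq \sup_{r \prec s} \Lpl^{r,s}(\phi),
\end{equation*}
and the symmetric argument handles $\Lpr^{p,q}(\psi)$. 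The induction on $n$ is trivial: the base case $n=1$ is immediate and the step uses the lemma applied to the intermediate point $p_{n-1}$.

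For the triangle terms $\Ltd^q(\phi,\psi)$ and $\Ltu^q(\phi,\psi)$, these depend only on $q$ and not on $p$, so I only need to observe that any $q$ appearing as the upper endpoint of some pair $p < q$ in the original supremum also appears as the upper endpoint of the adjacency $p_{n-1} \prec q$ in the chain decomposition above; hence each such $\Ltd^q$ or $\Ltu^q$ term already occurs in the restricted supremum. Combining the four bounds and taking the supremum over $p < q$ yields the desired inequality.

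The main (and really only) obstacle is confirming that the discreteness hypothesis as stated in the paper suffices to produce the chain of covers $p = p_0 \prec \cdots \prec p_n = q$ for every comparable pair. This is the standard interpretation of a discrete poset (sometimes phrased as the order being generated by the cover relation), so I would either invoke it directly or, if the paper wants a cleaner hypothesis, note that it is sufficient that every interval $[p,q]$ admits such a finite chain decomposition. Once that is in hand, the corollary follows immediately from Lemma \ref{lemma_lll}(1) and the pointwise nature of the triangle losses.
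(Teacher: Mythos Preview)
Your approach is essentially the same as the paper's: iterate Lemma~\ref{lemma_lll}(1) along a chain of covers to bound the parallelogram terms, and note that the triangle terms depend only on the single point $q$ so they already appear in the restricted supremum. The paper's proof is a one-line sketch (it writes the single inequality $\Lpl^{p,q}(\phi) \leq \max\{\Lpl^{p,p'}(\phi),\Lpl^{q',q}(\phi)\}$ for $p\prec p'$, $q'\prec q$ and leaves the iteration implicit), whereas you spell out the induction on the length of the cover chain and the two-inequality structure explicitly; your version is the more careful write-up of the same idea. Your caveat about whether the paper's stated notion of ``discrete'' actually guarantees a finite chain $p=p_0\prec\cdots\prec p_n=q$ is well taken: the paper's definition (``not dense'') does not by itself ensure this, so the implicit assumption that every interval decomposes into finitely many covers is exactly what is being used, and you are right to flag it.
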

 \begin{proof}
 Fix $p \prec p'$ and $q' \prec q$.
 By Lem.~\ref{lemma_lll},  we have
 $\Lpl^{p,q}(\phi) \leq \max \left\{ \Lpl^{p,p'}(\phi) , \Lpl^{q',q}(\phi) \right\}$.
The triangles are not affected since there is no pair involved in the diagram, so the corollary follows. 
\end{proof} 

Consider $P$ a finite poset. 
For a fixed $q \in P$, Cor.~\ref{cor:LossDiscrete} implies that the number of parallelogram diagrams needed to check  is $d(P) \cdot |P|$ where $d(P)$ is the maximum number of predecessors of any element in $P$. 
This implies that the running time for computing $L(\phi,\psi)$ in the case of a finite poset $P$ and $\C$ a $FC$ category is  
\[O\left(|P| \; d(P) \; N \; log(|P|) \right)\] where $N  = \max_{p \in P} \{|F(P)|, |G(P)| \}$
and for $\C = \Vec_\F$ is
\[O\left(|P| \; d(P) \; N^3 \; log(|P|) \right)\] where $N^3  = \max_{p \prec q \in P} \{\rk F([p \prec q], \rk G([p \prec q], \rk \phi_p, \rk \psi_p \}$ recalling that ny linear transformations, $h,h': V \to W$, we have that $\rk (h -h') \leq \rk (v) + \rk(v')$.

Note that this number of diagrams to check cannot be further improved. 
In general, $p < q < r$ in $P$, lack of commutativity of assignments mean that $\Ltd^p(\phi,\psi )$ and $\Ltd^q(\phi,\psi )$ do not bound each other, unless we have some additional conditions like those in Lemma \ref{lemma_ll_triang}. 
Similarly $\Lpl^{p,r}(\phi )$, and $\Lpl^{r,q}(\phi )$ do not bound each other. 
This can be seen in the following example.

\begin{example}
    We continue with Example \ref{example_F_G} using the flow $\T$ given by $\T_\e = m + \floor{\e}$. 
    We take the same $F$ along with $\hat{G}$ (a  modification of $G$ from that example) in $\Set^{\N}$  given by
   \[\begin{tikzcd}[cramped, row sep = tiny]
	{F:} & {a_0} & {a_1} & {a_2} & {a_3} & {a_4} & {a_5} & \cdots & {a_i} \\
	& {b_0} & {b_1} & {b_2} \\
	&&&& {b'_3} & {b'_4} \\
	{\hat{G}:} & {c_0} & {c_1} & {c_2} & {c_3} & {c_4} & {c_5} & \cdots & {c_i} \\
	& && {c'_2} & {c'_3}
	\arrow[from=1-2, to=1-3]
	\arrow[from=1-3, to=1-4]
	\arrow[from=1-4, to=1-5]
	\arrow[from=1-5, to=1-6]
	\arrow[from=1-6, to=1-7]
	\arrow[from=1-7, to=1-8]
	\arrow[from=1-8, to=1-9]
	\arrow[from=2-2, to=2-3]
	\arrow[from=2-3, to=2-4]
	\arrow[from=2-4, to=1-5]
	\arrow[from=3-5, to=3-6]
	\arrow[from=3-6, to=4-7]
	\arrow[from=4-2, to=4-3]
	\arrow[from=4-3, to=4-4]
	\arrow[from=4-4, to=4-5]
	\arrow[from=4-5, to=4-6]
	\arrow[from=4-6, to=4-7]
        \arrow[from=4-7, to=4-8]
	\arrow[from=4-8, to=4-9]
	\arrow[from=5-5, to=4-6]
        \arrow[from=5-4, to=5-5]
\end{tikzcd}\]
The distance between the points in their respective sets are:
\begin{equation*}
\begin{matrix}
    d_0^F(a_0,b_0) =  3 & d_1^F(a_1,b_1) = 2 &  d_2^F(a_2,b_2) = 1 & d_3^{\hat{G}}(b'_3,c_3) = 2 \\
    && d_2^{\hat{G}}(c_2,c'_2) = 2  & d_3^{\hat{G}}(b'_3,c_3') = 2    & d_4^{\hat{G}}(b'_4,c_4) = 1\\
    &&&d_3^{\hat{G}}(c_3,c'_3) = 1 &
\end{matrix}
\end{equation*}
Let us consider the $\T_1$-assignment $\phi$  and $\psi$ between $F$ and $\hat{G}$ with $\phi$ given by
\begin{equation*}
\begin{array}{llll}
\multicolumn{3}{c}{\phi_i(a_i) = c_{i+1}, \; i  \in \{0,1,2\}} & \phi_3(a_3) = b'_4 \\
\phi_0(b_0) = c_1 &  \phi_1(b_1) = c'_2 & \phi_2(b_2) = b'_3 &  
\end{array}
\end{equation*}
and $\psi$ sends everything to $a$, i.e.~$\psi_i(\bullet_i) = a_{i+1}  $.
Then, some of the losses we have are
\begin{align*}
    &\Lpl^{0,1}(\phi) = \Ltu^{0}(\phi,\psi) = d_2^{\hat{G}}(c_2,c'_2) = 1  \qquad \Ltu^{1}(\phi,\psi) = 0  \qquad  \Lpl^{1,2}(\phi) =  d_2^{\hat{G}}(b'_3,c'_3) = 2 \\ 
    &\Lpl^{1,3}(\phi) = \Lpl^{2,3}(\phi) = \Ltu^{2}(\phi,\psi) = d_4^{\hat{G}}(c_4,b'_4) = 1 \qquad \Lpl^{0,2}(\phi) = d_2^{\hat{G}}(c_3,b'_3) = 2 
\end{align*} 
In this way we obtain that 
\begin{align*}
     1 = \Lpl^{0,1}(\phi) < \Lpl^{1,2}(\phi) = 2 \quad &\text{but} \quad 2 = \Lpl^{1,2}(\phi) > \Lpl^{2,3}(\phi) = 1, \qquad \text{and} \\
    1 = \Ltu^{0}(\phi,\psi) > \Ltu^{1}(\phi,\psi) = 0 \quad &\text{but} \quad 0 = \Ltu^{1}(\phi,\psi) < \Ltu^{2}(\phi,\psi) = 1 . 
\end{align*}
so we cannot somehow determine $\Lpl^{p, r}(\phi)$ from  $\Lpl^{r,q}(\phi)$ or vice versa; similarly for $\Ltu^{p}(\phi,\psi)$ and $\Ltu^{q}(\phi,\psi)$.

\end{example}

\subsection{Optimization over complete linear orders}
\label{ssec:Linear_orders_opt}

We assume in this section that $P$ is a complete linear order. 
Further, for simplicity we assume $(\phi,\psi)$ is a $\T_\e$-assignment instead of $\mathcal{S}$-assignment. 
While the results are still valid for any $\cS \in \Flows_{\T}$, we focus on $\T_\e $ as we are interested in cases inducing interleavings from $\T_\e$-assignments. 

Note that to obtain reduction formulas like in Corollary \ref{cor:LossDiscrete}, we need to work over discrete posets, however, many interesting cases arise from more general $P$. 
In the next example, the density of $P = \R$ allows us to choose assignments where the loss  $L(\phi,\psi)$ is not finite.

\begin{example}
Consider $\T_\e(m) =  m + \e$  from Example \ref{example_1}. 
Define the merge tress $F$ and $G$ in $\Set^{[0,\infty)}$  by
\[
\adjustbox{scale=1,center}{
\begin{tikzcd}[cramped,row sep = tiny, column sep = tiny]
	{F:} & {a_0} & {a_1} & {a_2} & {a_3} & \cdots & {a_t} \ar[r] &\cdots 
    & {F(t) = \{a_t\}}  \\
	{G:} & {b^0_0} & {b^0_1} & {b^0_2} & {b^0_3} & \cdots & {b^0_t} \ar[r]& \cdots 
    & {G(t) = \{b^0_t\} \cup \{b^j_t\}_{j=\floor{t}+1}^{\infty}} \cong \Z_{\geq 0} \\
	& {b^1_0} \\
	& {b^2_0} & {b^2_1} \\
	& {b_0^3} & {b_1^3} & {b_2^3}\\
    & \vdots & \vdots & \vdots 
	\arrow[from=1-2, to=1-3]
	\arrow[from=1-3, to=1-4]
	\arrow[from=1-4, to=1-5]
	\arrow[from=1-5, to=1-6]
	\arrow[from=1-6, to=1-7]
	\arrow[from=2-2, to=2-3]
	\arrow[from=2-3, to=2-4]
	\arrow[from=2-4, to=2-5]
	\arrow[from=2-5, to=2-6]
	\arrow[from=2-6, to=2-7]
	\arrow[from=3-2, to=2-3]
	\arrow[from=4-2, to=4-3]
	\arrow[from=4-3, to=2-4]
	\arrow[from=5-2, to=5-3]
	\arrow[from=5-3, to=5-4]
	\arrow[from=5-4, to=2-5]
\end{tikzcd}
}
\]
Consider the $\T_{\frac{1}{2}}$-assignment $\phi$  and $\psi$ given by
\begin{equation*}
\phi_t(a_t) = 
\begin{cases}
    b^0_{t+\frac{1}{2}} & \text {if} \quad t =0 \text{ or } t \geq 1\\
    b^n_{t+\frac{1}{2}} & \text {if} \quad \dfrac{1}{2^{n+1}} \leq t <  \dfrac{1}{2^{n}}
\end{cases}
\qquad \text{and} \qquad 
\psi_t(\bullet_t) = a_{t+\frac{1}{2}}.
\end{equation*}
We can check that for $\frac{1}{2^{n+1}} \leq t <  \frac{1}{2^{n}}$ and $n > 0$
\begin{equation*}
    \Lpl^{0,t}(\phi) 
    = d_{t + \frac{1}{2}}^G \left(b^0_{t + \tfrac{1}{2}},b^n_{{t + \tfrac{1}{2}}}\right) 
    = n - 1 + (1 - (t + \tfrac{1}{2})) = n - t - \tfrac{1}{2}.
\end{equation*}     
Since $G(0) \cong \N$, this means that there is a pair of elements with distance at least $N$ for any $N$, so  $L(\phi,\psi) = \infty$.
\end{example}

Intuitively, the assignment fails because $\phi$ is not regular when it gets closer to 0, a critical value of $G$. 
Hence, let us define assignments that are regular with respect to the functor.

\begin{definition}
Let $F,G \in C^P$ be given for a complete linear order $P$. 
A $\T_\e$-assignment $(\phi,\psi)$ is \textit{constructible} if for all $p \leq q$ in a constant interval $I$  of $F$ we have that $\Lpl^{p,q}(\phi) = 0$; 
and for all $p \leq q$ in $J$ a constant interval of $G$ we have $\Lpr^{p,q}(\psi) = 0$. 
\end{definition}

Let $F,G \in C^P$ be tame modules (Defn.~\ref{def:tameModule}) with critical values for $F$ and $G$ given respectively by 
$c_1 < \cdots < c_m  $ 
and $d_1 < \cdots < d_{n} $.
Set $c_0 = d_0 = -\infty$ and $c_{m+1} = d_{n+1} = \infty$ to represent respectively the infimum and supremum elements of $P$. 
The goal is to split the constant intervals $(c_i,c_{i+1})$ into intervals that map via $\T_\e$ into constant intervals of $G$, and then to use the $\phi$ and $\psi$ maps defined for representatives from the first piece of this decomposition to extend to values for the rest of the interval. 

For $i \in \{0,\ldots,m\}$, consider the (possibly empty) intervals 
\begin{equation*}
\begin{matrix}
J_{ij}  = (c_i , c_{i+1}) \cap \{ t \in P \mid  \T_\e t = d_{j} \} 
    & j \in \{1,\ldots,n \}\\
J'_{ij} = (c_i , c_{i+1}) \cap \{ t \in P \mid  \T_\e t \in (d_{j} , d_{j + 1}) \} 
    & j \in \{0,\ldots,n \}.
\end{matrix}
\end{equation*}
In the example of Figure \ref{fig_line_critical_points} with $i=2$, we have broken up the interval $(c_2,c_3)$ into three intervals: $J_{2,2} = (c_2,c_2+1)$, $J'_{2,2} = [c_2+1,c_2+3)$, and $J'_{2,3} = [c_2+3, c_3)$.  

Choose one element from each non-empty $J_{ij}$ or $J'_{ij}$, and write these as $\{\alpha_{i,k} \mid k \in 1,\cdots,k_i\}$.
Again following the example of Figure \ref{fig_line_critical_points}, we have 
$\alpha_{2,1} \in J_{2,2}$, 
$\alpha_{2,2} \in J_{2,2}'$, and
$\alpha_{2,3} \in J_{2,3}'$.

To simplify notation, we reindex all the non-empty intervals and write $I_{i,k}$ for the interval (whether of the form $J_{i,\cdot}$ or $J'_{i,\cdot}$) containing $\alpha_{i,k}$.
Note that $I_{i,k} \cap I_{i,k'} = \emptyset$ if $k \neq k'$ and $(c_i, c_{i+1}) = \bigcup_{k=1}^{k_i} I_{i,k}$.  
Moreover, by Lemma \ref{lemma_critical}, the image of $I_{i,k}$ under $\T_\e$ is a constant interval of $G$. 
Hence, we have divided $(c_i, c_{i+1})$ in intervals that map to constant intervals.
Let $R_F$ be the points of $P$ which are either chosen representatives or critical points $c_i$, specifically 
\[
R_F = 
    \big\{ \alpha_{i,k} \mid i \in \{0,\cdots,m\}, k \in \{1,\cdots,k_i \}\big\} 
    \cup 
    \big\{c_i \mid i  \in \{1,\cdots,m\}\big\}.
\]

In a similar way, we can break up constant intervals of $G$ of the form $(d_j, d_{j+1})$ into portions which map to constant intervals of $F$. 
We then choose representatives $\beta_{j,\ell}$ of these sub intervals, and define $R_G$ to be 
\[
R_G = 
    \big\{ \beta_{j,\ell} \mid j \in \{0,\cdots,n\}, \ell \in \{1,\cdots,\ell_j \}\big\} 
    \cup 
    \big\{d_j \mid j  \in \{1,\cdots,n\}\big\}.
\]
We will often use just the first representative in each interval, so we write $\alpha_i = \alpha_{i,1}$ and $\beta_j = \beta_{j,1}$, and then write the relevant subsets of $R_F$ and $R_G$ as 
\begin{align*}
    B_F &= 
    \big\{ \alpha_{i} \mid i \in \{0,\cdots,m\}\big\} 
    \cup 
    \big\{c_i \mid i  \in \{1,\cdots,m\}\big\}, \qquad \text{and}\\
    B_G &= 
    \big\{ \beta_{j} \mid j \in \{0,\cdots,n\}\big\} 
    \cup 
    \big\{d_j \mid j  \in \{1,\cdots,n\}\big\}.
\end{align*}

\begin{figure}
    \centering
    \includegraphics[width=\linewidth]{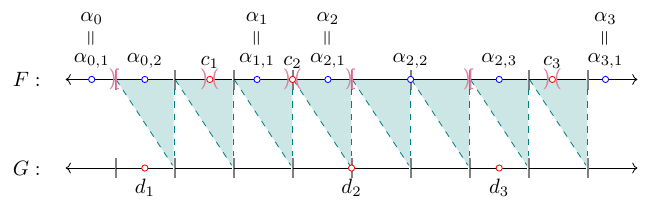}
    \caption{Example of the construction of the intervals $I_{i,k}$ for the map $\T_1: \R \to \R$ given by $\T_1(m) = \floor{m} + 1$ and the functors $F: \R \to C$ with critical set $\{c_1,c_2,c_3\}$ and $G: \R \to C$ with critical set $\{d_1,d_2,d_3\}$. For each interval we have $\alpha_{i,k} = I_{i,k}$.
    }
    \label{fig_line_critical_points}
\end{figure}

\begin{theorem}
\label{thm:ControlledLossLine}
  Let $F,G \in \C^P$ be tame and $(\phi,\psi)$  be a given constructible, Archimedean $\T_\e$-assignment between them. 
  Then, the loss is given by 
    \begin{align*}
        L(\phi,\psi) 
        = 
        \max \Bigg\{  
        &\max_{s \prec t \; \in B_F} \{\Lpl^{s,t}(\phi) + \delta_t\},  
        \max_{s \in R_F} \{\Ltd^s(\phi,\psi ) + \delta_s \}, \\
        &\max_{s \prec t \; \in B_G} \{\Lpr^{s,t}(\psi) +  \delta_t'\} , 
        \max_{s \in R_G } \{ \Ltu^s(\phi,\psi ) + \delta_s'\} 
        \Bigg\}
    \end{align*}
where $B_F \subset R_F$ and $B_G \subset R_G$ are defined as above,  $\delta_t$  and $\delta_s$ are given by 
\begin{align*}
\delta_t &= 
\begin{cases}
   0 &  t =c_i \\
   \sup\limits_{t' \; \in \; (c_i,c_{i+1})} \inf \{ \delta\geq 0 \mid \Lpl^{c_i,t'} (\phi) < \Lpl^{c_i,\alpha_i} (\phi) + \delta \}  & t = \alpha_{i} 
\end{cases} \\
\delta_s &= 
\begin{cases}
   0 &  s =c_i \\
   \sup\limits_{s' \; \in \; I_{i,k}} \inf \{ \delta\geq 0 \mid  \Ltd^{s'} < \Ltd^{\alpha_{i,k}} (\phi,\psi) + \delta \}  & s = \alpha_{i,k}
\end{cases} 
\end{align*}
and $\delta_t'$ and $\delta_s'$ are defined analogously using $\beta_{ij}$. 
 \end{theorem}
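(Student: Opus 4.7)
The plan is to prove both directions: that the right-hand side is at most $L(\phi,\psi)$, and conversely $L(\phi,\psi)$ is at most the right-hand side. The first direction is essentially bookkeeping: unpacking the definitions of $\delta_t, \delta_s, \delta_t', \delta_s'$, one sees that each correction amounts, for example, to the identity $\Lpl^{c_i,\alpha_i}(\phi) + \delta_{\alpha_i} = \sup_{t' \in (c_i,c_{i+1})} \Lpl^{c_i, t'}(\phi)$ (and analogously for the other families), and each such $\Lpl^{c_i, t'}(\phi)$ is bounded by $L(\phi,\psi)$ by the very definition of $L$.

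The bulk of the work is the reverse inequality $L(\phi,\psi) \leq$ right-hand side, carried out one component at a time. For the parallelogram component, I would pick arbitrary $p < q \in P$ and split $[p,q]$ at the intervening critical values $c_i$ of $F$ together with the chosen representatives $\alpha_i$. Iterating Lemma~\ref{lemma_lll}(1), $\Lpl^{p,q}(\phi)$ is bounded above by the maximum over these pieces, which are of three types: interior pieces $\Lpl^{c_i,\alpha_i}(\phi)$ and $\Lpl^{\alpha_i,c_{i+1}}(\phi)$ (adjacent $B_F$-pairs already on the right-hand side), a possible left-boundary piece $\Lpl^{p,c_{i_0+1}}(\phi)$, and a possible right-boundary piece $\Lpl^{c_j,q}(\phi)$. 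The left-boundary piece reduces to $\Lpl^{\alpha_{i_0},c_{i_0+1}}(\phi)$ via Lemma~\ref{lemma_lll}(1) (if $p < \alpha_{i_0}$) or Lemma~\ref{lemma_lll}(2) (if $p > \alpha_{i_0}$), since constructibility gives $\Lpl$-vanishing on pairs inside $(c_{i_0}, c_{i_0+1})$. The right-boundary piece is absorbed directly by the definition of $\delta_{\alpha_j}$: for any $q \in (c_j,c_{j+1})$, $\Lpl^{c_j,q}(\phi) \leq \Lpl^{c_j,\alpha_j}(\phi) + \delta_{\alpha_j}$, and finiteness of $\delta_{\alpha_j}$ is ensured by the Archimedean assumption through Lemma~\ref{lemma_lll}(3). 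The $\Lpr$-component for $\psi$ runs symmetrically with $B_G$, $\beta_j$, and the primed $\delta$'s.

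For the triangle component, fix $s \in P$. If $s = c_i$ is a critical value then $s \in R_F$ and $\delta_{c_i} = 0$, so there is nothing to show. Otherwise $s \in I_{i,k}$ for some $k$, and the desired inequality $\Ltd^s(\phi,\psi) \leq \Ltd^{\alpha_{i,k}}(\phi,\psi) + \delta_{\alpha_{i,k}}$ is immediate from the definition of $\delta_{\alpha_{i,k}}$ as a supremum over exactly such $s$; the real content is proving finiteness of $\delta_{\alpha_{i,k}}$. Since $I_{i,k} \subseteq (c_i, c_{i+1})$ is constant for $F$, constructibility yields $\Lpl^{s,\alpha_{i,k}}(\phi) = 0$, and the vanishing of $\Lpr$ on the relevant pair should follow from the fact that $\T_\e(I_{i,k})$ sits in a constant interval of $G$. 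Lemma~\ref{lemma_ll_triang}(2) then supplies the finite Archimedean slack bounding $\Ltd^s(\phi,\psi)$ by $\Ltd^{\alpha_{i,k}}(\phi,\psi) + \delta$. The $\Ltu$-component is handled mirror-symmetrically on $R_G$.

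I expect the main obstacle to be the triangle case. Lemma~\ref{lemma_ll_triang} requires both $\Lpl = 0$ \emph{and} $\Lpr = 0$ on the pair $(s, \alpha_{i,k})$, but the $I_{i,k}$ are designed only so that $I_{i,k}$ is constant for $F$ and $\T_\e(I_{i,k})$ is constant for $G$; the interval $I_{i,k}$ itself may straddle critical values of $G$ in $P$, so $\Lpr^{s,\alpha_{i,k}}(\psi) = 0$ does not follow immediately from constructibility. Closing this gap will likely require pushing the triangle distance forward along $\T_\e$ via Lemma~\ref{lemma_preimage} until one reaches a location where the relevant $F$-morphism is an isomorphism, and then using the Archimedean property (Lemma~\ref{lemma_preimage_Archimedean}) to convert the residual slack back into a finite contribution to $\delta_{\alpha_{i,k}}$. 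Verifying that this procedure, together with its analogue for $R_G$ under $\Ltu$, produces only finite corrections is where the careful bookkeeping will lie.
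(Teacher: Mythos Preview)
Your approach is essentially the paper's. The paper also only argues the nontrivial direction $L(\phi,\psi)\leq \text{(right-hand side)}$, leaving the other inequality implicit in the definitions of the $\delta$'s exactly as you describe. For the parallelogram term the paper packages your left- and right-boundary reductions into a preliminary lemma (Lem.~\ref{lem:SameLossInInterval}), but the content---split at the $c_i$'s and $\alpha_i$'s via Lemma~\ref{lemma_lll}(1), replace $\Lpl^{s',c_i}$ by $\Lpl^{\alpha_{i-1},c_i}$ using constructibility plus Lemma~\ref{lemma_lll}(2), and absorb $\Lpl^{c_j,t'}$ into $\Lpl^{c_j,\alpha_j}+\delta_{\alpha_j}$---is identical to yours.

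On the triangle case your diagnosis is sharp but your proposed repair is heavier than needed. You are right that constructibility does \emph{not} directly give $\Lpr^{x,y}(\psi)=0$ for $x,y\in I_{i,k}$, since $I_{i,k}$ need not sit in a constant interval of $G$; the paper asserts this without justification. However, if you inspect the proof of Lemma~\ref{lemma_ll_triang}, the $\psi$-commutativity it actually uses is at the \emph{shifted} pair $(\cS p,\cS q)=(\T_\e p,\T_\e q)$ (the diagram's arrows are $\psi_{\cS p}\colon G(\cS p)\to F(\cS\cS p)$, despite the misleading labels). The hypothesis ``$\Lpr^{p,q}(\psi)=0$'' in the lemma's statement is thus a slip for ``$\Lpr^{\cS p,\cS q}(\psi)=0$''. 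And this shifted vanishing is exactly what the $I_{i,k}$ are designed to provide: $\T_\e(I_{i,k})$ lies inside either $\{d_j\}$ or $(d_j,d_{j+1})$, so constructibility yields $\Lpr^{\T_\e x,\T_\e y}(\psi)=0$ immediately. No further Archimedean pushing is required; once this is recognized, Lemma~\ref{lemma_ll_triang}(1) and (2) apply cleanly and finiteness of $\delta_{\alpha_{i,k}}$ follows.
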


This theorem allows us to reduce the number of diagrams we need to consider to compute the overall loss of the assignment. For instance, instead of calculating $\Lpl^{p,q}(\phi)$ for $\sim |P|^2 $ number of pairs of the form $p < q$ in $P$ we only need to  calculate $ \Lpl^{s,t}(\phi)$ for  $|B| - 1 =  2m $  pairs $s \prec t$ in $B$. Similarly, instead of calculating $\Ltd^{p}(\phi,\psi)$ for all  elements $p$ in  $P$ we only need to  calculate $ \Ltd^{s}(\phi,\psi)$ for $s \in R_F \subset P$, a finite subset of $P$.

We first need a simple lemma to simplify the proof.  

\begin{lemma}
\label{lem:SameLossInInterval}
     Take $F,G \in \C^P$ and $(\phi,\psi)$ a  constructible, Archimidean $\T_\e$-assignment between them. 
     Let $(c,c')$ to be a constant interval of $F$. 
     Then, for all $s<t \in (c,c')$, 
    \begin{equation*}
     \Lpl^{s,c'}(\phi) = \Lpl^{t,c'}(\phi)
    \end{equation*}
     and there is a $\delta >0$ such that 
  \[
   \Lpl^{c,t}(\phi) \leq \Lpl^{c,s}(\phi) \leq \Lpl^{c,t}(\phi) + \delta.
  \]
\end{lemma}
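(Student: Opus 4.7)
The proof plan is to unpack what constructibility gives us inside a constant interval of $F$, and then apply the three parts of Lemma~\ref{lemma_lll} with carefully chosen intermediate point $r$. Because $(c,c')$ is a constant interval of $F$, the morphism $F[s \leq t]$ is an isomorphism for every $s \leq t$ in $(c,c')$, and because the assignment is constructible we have $\Lpl^{s,t}(\phi) = 0$. These two facts will make the maxima in Lemma~\ref{lemma_lll} collapse onto a single term.

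For the first equality $\Lpl^{s,c'}(\phi) = \Lpl^{t,c'}(\phi)$, my plan is to use Lemma~\ref{lemma_lll}(1) with the triple $s < t < c'$ to obtain
\[
\Lpl^{s,c'}(\phi) \;\leq\; \max\{\Lpl^{s,t}(\phi),\, \Lpl^{t,c'}(\phi)\} \;=\; \Lpl^{t,c'}(\phi),
\]
where the equality uses $\Lpl^{s,t}(\phi)=0$ from constructibility. For the reverse inequality, since $F[s \leq t]$ is an isomorphism, Lemma~\ref{lemma_lll}(2) applies with the same triple and yields
\[
\Lpl^{t,c'}(\phi) \;\leq\; \max\{\Lpl^{s,t}(\phi),\, \Lpl^{s,c'}(\phi)\} \;=\; \Lpl^{s,c'}(\phi).
\]

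For the second chain of inequalities, I would apply Lemma~\ref{lemma_lll}(1) with the triple $c < s < t$, giving
\[
\Lpl^{c,t}(\phi) \;\leq\; \max\{\Lpl^{c,s}(\phi),\, \Lpl^{s,t}(\phi)\} \;=\; \Lpl^{c,s}(\phi),
\]
again collapsing by constructibility. For the upper bound on $\Lpl^{c,s}(\phi)$, the Archimedean hypothesis on $(\phi,\psi)$ lets me invoke Lemma~\ref{lemma_lll}(3) with the same triple $c<s<t$, producing some $\delta>0$ with
\[
\Lpl^{c,s}(\phi) \;\leq\; \max\{\Lpl^{c,t}(\phi),\, \Lpl^{s,t}(\phi)\} + \delta \;=\; \Lpl^{c,t}(\phi) + \delta.
\]

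There is no real obstacle here; the content of the lemma is just that constructibility zeros out the ``intermediate'' parallelogram term in Lemma~\ref{lemma_lll}, and being inside the constant interval provides the isomorphism hypothesis needed for part~(2). The only subtle point is that the Archimedean loss constant $\delta$ in part~(3) depends on the specific pair of points (it comes from Lemma~\ref{lemma_preimage_Archimedean}), so the $\delta$ produced in the second statement is the one associated with the chosen $s,t$; this matches the statement, which only asserts existence of such a $\delta$.
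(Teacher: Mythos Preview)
Your proposal is correct and matches the paper's proof essentially line for line: both arguments apply Lemma~\ref{lemma_lll}(1) and (2) with the triple $s<t<c'$ for the first equality, and Lemma~\ref{lemma_lll}(1) and (3) with the triple $c<s<t$ for the second chain, using constructibility to zero out $\Lpl^{s,t}(\phi)$ and the constant-interval hypothesis to supply the isomorphism needed in part~(2).
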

\begin{proof}
    Take $s<t \in (c,c')$. 
    For the first statement, since $F[s \leq t]$ is isomorphism, using Lemma \ref{lemma_lll} statements 1) and 2) we have that
     \begin{align*}
        \Lpl^{s,c'}(\phi) &\leq \max \left\{ \Lpl^{s,t}(\phi) , \Lpl^{t,c'}(\phi) \right\} = \Lpl^{t,c'}(\phi) 
        \leq \max \left\{ \Lpl^{s,t}(\phi) , \Lpl^{s,c'}(\phi) \right\} = \Lpl^{s,c'}(\phi).
    \end{align*}
    The two equalities arise because $(\phi,\psi)$ is a  constructible $\T_\e$-assignment and thus  $\Lpl^{s,t}(\phi) = 0$.
    Similarly for the second statement, by Lemma \ref{lemma_lll} statements 1) and 3)  there is $\delta$ such that
    \begin{align*}
        \Lpl^{c,t}(\phi) &\leq \max \left\{ \Lpl^{c,s}(\phi) , \Lpl^{s,t}(\phi) \right\} = \Lpl^{c,s}(\phi) 
        \leq \max \left\{ \Lpl^{c,t}(\phi) , \Lpl^{s,t}(\phi) \right\} + \delta = \Lpl^{c,t}(\phi) + \delta
    \end{align*}
 completing the proof.
\end{proof}
 
 \begin{proof}[Proof of Thm.~\ref{thm:ControlledLossLine}]
 First consider the parallelogram terms. 
 We show that $\Lpl^{s' , t'}(\phi)\leq  \max_{s \prec t \; \in B_F} \{\Lpl^{s,t}(\phi) + \delta_t\}$ for all $s'<t' \in P$, which implies that this parallelogram term upper bounds the similar version in the original loss definition. 
For all $s' \leq t'$ in $(c_i,c_{i+1})$, by the assumption that $(\phi,\psi)$ is constructible we have by definition that   $\Lpl^{s',t'}(\phi) = 0$ so this case is done. 
Next, suppose $s'<t'$ in $P$ and there is at least one critical value of $F$  between them. 
Specifically, let $\{c_i, ..., c_j\}$ be the critical values between $s'$ and $t'$. i.e.~$s'<c_i< \cdots < c_j < t'$.
By induction and Lemma \ref{lemma_lll}, statement 1), we have  
\begin{equation*}
    \Lpl^{s',t'}(\phi) \leq 
    \max
    \left\{ 
    \Lpl^{x,y}(\phi) \mid 
    (x,y) \in \{ (s',c_i), (c_j,t')\} 
    \cup \{ (c_k,\alpha_k) \}_{k=i}^{j-1}
    \cup \{ (\alpha_k, c_{k+1}) \}_{k=i}^{j-1}
    \right\}.
\end{equation*}
By Lem.~\ref{lem:SameLossInInterval}, $\Lpl^{s',c_{i}}(\phi) = \Lpl^{{\alpha_{i-1}},c_{i}}(\phi)$ and there is $\delta_{t'}$ such that 
$\Lpl^{c_j,t'}(\phi) \leq \Lpl^{c_j,{\alpha_j}}(\phi) + \delta_{t'}$. So, 
\begin{equation*}
    \Lpl^{s',t'}(\phi) \leq 
    \max
    \left\{ 
    \Lpl^{x,y}(\phi) + \delta_y \mid 
    (x,y) \in  \{ (c_k,\alpha_k) \}_{k=i}^{j}
    \cup \{ (\alpha_k, c_{k+1}) \}_{k=i-1}^{j-1}
    \right\}
\end{equation*}
where $\delta_y = \delta_{t'}$ if $y = \alpha_j$ and $\delta_y = 0$ otherwise. 
It follows that 
\begin{equation*}
    \Lpl^{s',t'}(\phi) \leq  \max_{s \prec t \; \in B_F} \{\Lpl^{s,t}(\phi) + \delta_t\}.
\end{equation*}
A symmetric argument considering the critical values of $G$ and $\beta_j$ gives that 
$$
\Lpr^{s',t'}(\psi) \leq 
\max_{s \prec t \; \in B_G} \{\Lpr^{s,t}(\psi) + \delta_t'\}.
$$

For the triangle terms we will show that for any $s' \in P$ we have  $\Ltd^{s'}(\phi,\psi) \leq  \max_{s \in R_F} \{\Ltd^s(\phi,\psi ) + \delta_s \}$ which implies that this triangle term upper bounds the similar version in the original loss definition. Recall that 
\[ 
    P = \bigcup_{i=1}^m \{ c_i\} \cup \bigcup_{i=0}^m (c_i,c_{i+1}) = \bigcup_{i=1}^m \{ c_i\} \cup \bigcup_{i=0}^m \bigcup_{k=1}^{k_i} I_{i,k}.
\]
Then, for $s' \in P$, the element $s'$ is either a critical point of $F$ or $s' \in I_{i,k}$ for some interval $I_{i,k}$.  
If $s'$ is a critical point, it is in the set $R_F$ so we are done. 
In the case where $s' \in I_{i,k}$, by the definition of $I_{i,k}$ and the fact that $\T_\e$ is a constructible assignment we have that for any $x<y$ in $I_{i,k}$, $\Lpl^{x, y}(\phi) = 0$ and $\Lpr^{x, y}(\psi) = 0$. 
Given that $\T_\e$ is an Archimedean assignment, by Lemma \ref{lemma_ll_triang} (2) there is $\delta > 0$ such that 
\[
 \Ltd^{y}(\phi,\psi) \leq \Ltd^{x}(\phi,\psi) < \Ltd^{y}(\phi,\psi) + \delta.
\]
Given that both $s'$ and $\alpha_{i,k}$ are in $I_{i,k}$, this implies there is $\delta \geq 0$ such that
\[
 \Ltd^{s'}(\phi,\psi) <  \Ltd^{\alpha_{i,k}}(\phi,\psi) + \delta.
\]
Therefore, for any $s' \in P$,
\[
\Ltd^{s'}(\phi,\psi) \leq  \max_{s \in R_F} \{\Ltd^s(\phi,\psi ) + \delta_s \}.
\]
In the same way, using the critical values of $G$ and $\beta_{j,\ell}$ gives that  
\[
\Ltu^{s'}(\phi,\psi) \leq  \max_{s \in R_G} \{\Ltu^s(\phi,\psi ) + \delta_s' \}.
\]
\end{proof}

\begin{remark}
The intervals $I_{i,k}$ are of the form $\lfloor \inf I_{i,k} , \inf I_{i,k+1} \rceil$ or $\lfloor \inf I_{i,k} , c_{i+1} \rceil$ where the symbols $\lfloor$ and $\rceil$ denote that the lower and upper endpoints are either closed or open according to whether they are included in  $I_{i,k}$, respectively. Moreover, if we choose $\alpha_{i,k} = \min I_{i,k}$ when it exists we obtain that $\delta_{\alpha_{i,k}} = 0$.    
\end{remark}
  
\paragraph{Creating a constructible assignment }
Let us create a constructible $\T_\e$-assignment $(\phi,\psi)$ between $F$ and $G$ for given choices of maps on the sets $B_F$ and $B_G$. 
Specifically, assume we have chosen a collection of maps $\phi_r: F(r) \to G\T_\e (r)$ for $r \in B_F$. 
Let us extend the maps to an unnatural transformation $\phi: F \rightdsar G\T_\e$ by setting
\begin{equation*}
    \phi_t =
        \begin{cases}
             \phi_t &  \text{if } t \in B_F\\
             (G[\T_\e t \leq \T_\e \alpha_i])^{-1} \circ  \phi_{\alpha_i} \circ F[t \leq \alpha_i ] &  \text{if } t \in (c_i,\alpha_{i})\\
             G[\T_\e \alpha_i \leq \T_\e t ] \circ \phi_{\alpha_i} \circ (F[\alpha_i \leq t])^{-1} &  \text{if } t \in (\alpha_i,c_{i+1}).
        \end{cases}
\end{equation*}
Symmetrically, construct $\psi: G \rightdsar F\T_\e$ from a collection of maps $\psi_r: G(r) \to F\T_\e (r)$ for $r \in B_G$ and extending it for all values of $t \in P$.
Note that we are using the fact that the intervals $(c_i, c_{i+1})$ and $(d_j, d_{j+1})$ are constant for $F$ and $G$ respectively in order for these maps to be well defined.

Taking $(\phi, \psi)$ we obtain a $\T_\e$ assignment of $F$ and $G$. For this assignment we obtain that if $t \in (c_i,\alpha_{i})$ 
\begin{equation}
    G[\T_\e t \leq \T_\e \alpha_i] \circ \phi_{t} = G[\T_\e t \leq \T_\e \alpha_i] \circ (G[\T_\e t \leq \T_\e \alpha_i])^{-1} \circ  \phi_{\alpha_i} \circ F[t \leq \alpha_i] = \phi_{\alpha_i} \circ F[t \leq \alpha_i].
    \label{eq_before_a}
\end{equation}

Equivalently, if $t \in (\alpha_i,c_{i+1})$
\begin{equation}
    \phi_t \circ F[\alpha_i \leq t] = G[\T_\e \alpha_i \leq \T_\e t ] \circ \phi_{\alpha_i} \circ (F[\alpha_i \leq t])^{-1} \circ F[\alpha_i \leq t] = G[\T_\e \alpha_i \leq \T_\e t ] \circ \phi_{\alpha_i}.
    \label{eq_after_a}
\end{equation}

\begin{lemma}
The above defined $(\phi,\psi)$ is a constructible $\T_\e$-assignment.
\end{lemma}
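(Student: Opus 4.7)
My plan is to verify the constructibility conditions directly by showing that the parallelogram $\Parallelograml_\phi(p,q)$ commutes strictly for all $p \leq q$ in a constant interval of $F$, and symmetrically that $\Parallelogramr_\psi(p,q)$ commutes strictly on constant intervals of $G$. Since every constant interval of $F$ is contained in some $(c_i, c_{i+1})$, I fix $p \leq q$ in such an interval and aim to establish the pointwise equality
\[
G[\T_\e p \leq \T_\e q] \circ \phi_p = \phi_q \circ F[p \leq q],
\]
which immediately forces $d^G_{\T_\e q}(\,\cdot\,,\,\cdot\,) = 0$ in the loss definition and hence $\Lpl^{p,q}(\phi) = 0$.

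The central inputs are the two identities \eqref{eq_before_a} and \eqref{eq_after_a}, which say that $\phi_t$ agrees with the composition routed through $\phi_{\alpha_i}$ after composing with the appropriate $G$-arrow, together with functoriality of $F$ and $G$ via Cor.~\ref{coro_split}. I would split into three cases according to the position of $p$ and $q$ relative to $\alpha_i$: (a) $p \leq q \leq \alpha_i$, (b) $p \leq \alpha_i \leq q$, and (c) $\alpha_i \leq p \leq q$. In each case, substituting the defining piecewise formula for $\phi_p$ and $\phi_q$ into the two sides of the parallelogram and applying the appropriate identity turns each side into the same composition through the fixed morphism $\phi_{\alpha_i}$; the concluding step cancels the transition maps $F[t \leq \alpha_i]$ or $G[\T_\e t \leq \T_\e \alpha_i]$, both of which are invertible inside the constant interval used to define the extension.

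I expect the main technical obstacle to be case (b), which is the only one that mixes the two defining formulas \eqref{eq_before_a} and \eqref{eq_after_a} and therefore requires a genuine diagram chase through $F(\alpha_i)$ and $G(\T_\e \alpha_i)$ combined with the factorizations $F[p \leq q] = F[\alpha_i \leq q] \circ F[p \leq \alpha_i]$ and $G[\T_\e p \leq \T_\e q] = G[\T_\e \alpha_i \leq \T_\e q] \circ G[\T_\e p \leq \T_\e \alpha_i]$; cases (a) and (c) each use a single identity twice and amount to bookkeeping. Once strict commutativity of the parallelogram is established, running the identical argument with $\psi$, $\beta_j$, and the roles of $F$ and $G$ interchanged yields $\Lpr^{p,q}(\psi) = 0$ on constant intervals of $G$, completing the proof that $(\phi,\psi)$ is constructible.
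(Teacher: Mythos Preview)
Your proposal is correct and follows essentially the same approach as the paper: reduce to $p \leq q$ inside some $(c_i,c_{i+1})$, split into the three cases according to the position of $p,q$ relative to $\alpha_i$, and in each case use Eqns.~\eqref{eq_before_a} and \eqref{eq_after_a} together with functoriality to route both sides through $\phi_{\alpha_i}$, cancelling the relevant isomorphism $G[\T_\e t \leq \T_\e \alpha_i]$ or $F[\alpha_i \leq s]$ at the end. The paper's proof is identical in structure, and the $\psi$ half is handled symmetrically as you indicate.
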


\begin{proof}
 Note that for any $I$ which is a constant interval of $F$, $I \subseteq (c_i,c_{i+1})$ for some $i$. 
We need to show that for  $s < t$ in $I$, $\Lpl^{s,t}(\phi) = 0$.
 We have three cases to show that $ G[\T_\e s \leq  \T_\e t] \circ \phi_{s} = \phi_t \circ F[s \leq t] $.

\begin{itemize}
    \item If $s < t \leq \alpha_i$:
    \begin{align*}
        G[\T_\e t \leq \T_\e \alpha_i] \circ G[\T_\e s \leq  \T_\e t] \circ \phi_{s} & = G[\T_\e s \leq  \T_\e \alpha_i ] \circ \phi_{s} & \\
        & = \phi_{\alpha_i} \circ F[s \leq \alpha_i] & \text{by Eqn.} ~\eqref{eq_before_a} \\
        & = \phi_{\alpha_i} \circ F[t \leq \alpha_i] \circ F[s \leq t] & \\
        & = G[\T_\e t \leq \T_\e \alpha_i ] \circ \phi_t \circ F[s \leq t] & \text{by Eqn.} ~\eqref{eq_before_a}
    \end{align*}
    Since  $G[\T_\e t, \T_\e \alpha_i ]$ is isomorphism, $G[\T_\e t, \T_\e \alpha_i ]$ can be canceled from the left to obtain the result. 
    
    \item If $s < \alpha_i \leq t$.
    \begin{align*}
        G[\T_\e s \leq  \T_\e t] \circ \phi_{s} &= G[\T_\e s \leq \T_\e \alpha_i] \circ G[\T_\e \alpha_i, \T_\e t ] \circ \phi_s & \\
        &= G[\T_\e s \leq \T_\e \alpha_i] \circ \phi_{\alpha_i} \circ F[s \leq \alpha_i] & \text{Eqn.} ~\eqref{eq_before_a}\\
        &=\phi_t \circ F[\alpha_i, t]\circ F[s \leq \alpha_i] & \text{Eqn.} ~\eqref{eq_after_a} \\
        &= \phi_t \circ F[s \leq t] &
    \end{align*}
    \item If $\mathbf{\alpha_i \leq s < t}$
    \begin{align*}
        G[\T_\e s \leq  \T_\e t] \circ \phi_s \circ F[\alpha_i \leq s]  &= G[\T_\e s \leq  \T_\e t] \circ G[\T_\e \alpha_i \leq \T_\e s ] \circ \phi_{\alpha_i} &  \text{Eqn.} ~\eqref{eq_after_a}\\
        &=G[\T_\e \alpha_i \leq \T_\e t ] \circ \phi_{\alpha_i} &  \\
        &= \phi_t \circ F[\alpha_i \leq t]  & \text{Eqn.} ~\eqref{eq_after_a} \\
        &= \phi_t \circ F[s \leq t] \circ F[\alpha_i \leq s] &    
    \end{align*}
    
     Since $F[\alpha_i \leq s]$ is isomorphism, $F[\alpha_i \leq s]$  can be canceled from the right to obtain the result. 
\end{itemize}
\end{proof}

\subsection{Optimization over products of complete linear orders}

Let us generalize the previous result for modules over $P^k$, the $k$-th product of the linear poset $P$. Take $F,G$ tame modules of $C^{P^k}$ with a $\T_\e$-assignment between them that is constructible and line-preserving. 
The generalization has the following steps:
\begin{enumerate}
    \item Define the constant intervals of $P^k$ with respect to $F,G$.
    \item Subdivide each constant interval of $F$ into intervals that map into constant intervals of $G$ via $\T_\e$ ( and viceversa).
    \item Define a poset of representatives of the intervals.
    \item Prove the loss is bounded by the loss of the set of representatives.
\end{enumerate}

\subsubsection{Defining constant intervals on $P^k$}

Let $F$,$G$ in $C^{P^k}$ be cubical tame modules (Def.~\ref{def:cubicalTame}) and let 
$x^i_2 < x^i_4< \cdots < x^i_{2m_i}$ 
be the critical $i$-coordinates of $F$. 
Similarly, let $y^i_2 < \cdots < y^i_{2n_i}  $ be the critical $i$-coordinates of $G$. 
Set $x^i_0 = y^i_0 =-\infty$ and $x_{2(m+1)} =  y_{2(n + 1)} =   \infty$ where $-\infty$ and $\infty$ represent respectively the infimum and supremum elements of $P$. 

First, we split $P^k$ into constant intervals of $F$. 
Consider the set of indices 
\begin{equation*}
\Lambda = \{ (\tuple{j}{k}) \mid \text{for }0 \leq j_i \leq 2m_i+1 \}. 
\end{equation*}
For each $I = (\tuple{j}{k})$ in $\Lambda$, define the cube $S_{I} =  X^1_{j_1} \times \cdots \times X^k_{j_k} $ where $X^i_{j} = \{x^i_{j}\} $ if $j$ is even and $X^i_{j} = (x^i_{j - 1},x^i_{j+1}) $ if $j$ is odd.
For example, in Figure \ref{fig_plane_critical_lines}, the cube $S_{33} = (x^1_2,x^1_4) \times (x^2_2,x^2_4)$, meanwhile $S_{32} = (x^1_2,x^1_4) \times \{x^2_2\}$.

\begin{figure}
    \centering
    \includegraphics[width=0.45\textwidth]{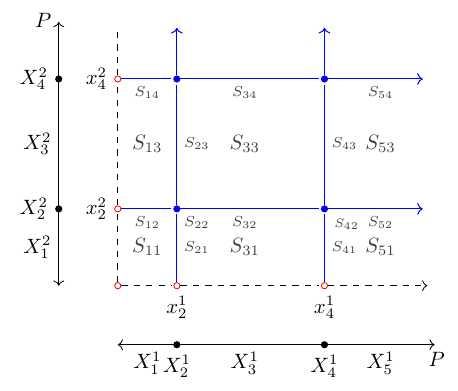}
    \hspace{1em}
    \includegraphics[width=0.45\textwidth]{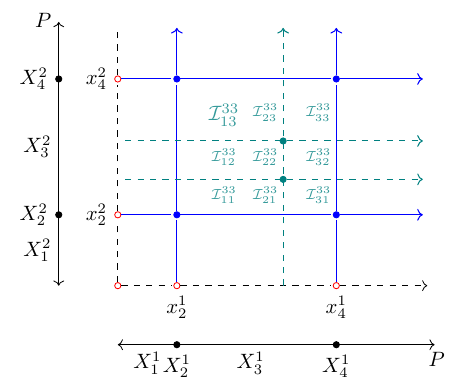}
    \caption{On the left: Example of the construction of the cubes $S_{(j_1,j_2)} = X^1_{j_1} \times X^2_{j_2}$  for the functors $F: P^2 \to C$ with critical coordinates $\{x^1_2,x^1_4\}$ and  $\{x^2_2,x^2_4\}$. On the right: We divide the cube $S_{33}$ into interval $\mathcal{I}_J^{33}$ that map into the  constant cube $R_J$ by the morphism $\T_\e$. For example,  $ p \in \mathcal{I}_{12}^{33}$ if and only if $p \in S_{33}$ and $\T_\e p \in R_{12}$. } 
    \label{fig_plane_critical_lines}
\end{figure}

\begin{theorem}
    The cubes $S_{I} \subseteq P^k$ are constant intervals of $F$.
\end{theorem}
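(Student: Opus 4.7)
The plan is to establish two things: first, that each $S_I$ is an interval of $P^k$, and second, that $F[\mathbf{p} \leq \mathbf{q}]$ is an isomorphism for every $\mathbf{p} \leq \mathbf{q}$ in $S_I$. The interval claim is the easier half: each factor $X^i_{j_i}$ is either a singleton or an open interval of the linear poset $P$, and singletons and open intervals are convex. Since the product of convex subsets is convex (as noted in Section~\ref{Sec_back}), $S_I$ is a non-empty convex subset of $P^k$.

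For the constancy, I would fix $\mathbf{p} = \ncoord{p}{k} \leq \mathbf{q} = \ncoord{q}{k}$ in $S_I$ and factor $F[\mathbf{p} \leq \mathbf{q}]$ through a coordinate-by-coordinate staircase. Define $\mathbf{p}_0 = \mathbf{p}$ and, for $1 \leq i \leq k$, let $\mathbf{p}_i$ be the element of $P^k$ whose first $i$ coordinates are $q^1, \ldots, q^i$ and whose remaining coordinates are $p^{i+1}, \ldots, p^k$. Each $\mathbf{p}_i$ lies in $S_I$ by convexity, and $\mathbf{p}_{i-1} \leq \mathbf{p}_i$ differs only in the $i$-th coordinate. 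By Corollary~\ref{coro_split},
\[
F[\mathbf{p} \leq \mathbf{q}] \;=\; F[\mathbf{p}_{k-1} \leq \mathbf{p}_k] \,\circ\, \cdots \,\circ\, F[\mathbf{p}_0 \leq \mathbf{p}_1],
\]
so it suffices to prove each individual step is an isomorphism.

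Fix $i$. Let $F_i \colon P \to C$ be the functor obtained by fixing the coordinates of $\mathbf{p}_{i-1}$ outside position $i$, so that $F[\mathbf{p}_{i-1} \leq \mathbf{p}_i] = F_i[p^i \leq q^i]$. If $j_i$ is even, then $X^i_{j_i} = \{x^i_{j_i}\}$ forces $p^i = q^i$ and the morphism is an identity. If $j_i$ is odd, then $X^i_{j_i} = (x^i_{j_i - 1}, x^i_{j_i + 1})$ is the open interval between two consecutive critical $i$-coordinates of $F$ (or unbounded on one side), so the whole interval $[p^i, q^i] \subseteq X^i_{j_i}$ contains no critical $i$-coordinate. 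By the definition of critical $i$-coordinate, no element of $[p^i, q^i]$ is critical for $F_i$ (regardless of the choice of frozen coordinates). Applying the Critical Value Lemma (Lem.~\ref{lemma_critical}) to $F_i$ shows that $F_i$ is constant on $[p^i, q^i]$, so $F_i[p^i \leq q^i]$ is an isomorphism.

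Composing, $F[\mathbf{p} \leq \mathbf{q}]$ is a composition of isomorphisms and hence an isomorphism, which is exactly what is required for $S_I$ to be a constant interval of $F$. The only subtle point, and the step I would write out most carefully, is the passage from "$t$ is not a critical $i$-coordinate of $F$" to "$t$ is not a critical value of the restricted functor $F_i$ for our specific choice of frozen coordinates from $S_I$" — this is precisely the content of Definition of critical $j$-coordinate, since non-criticality is a universal statement over all choices of the remaining coordinates.
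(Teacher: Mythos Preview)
Your proof is correct and follows essentially the same route as the paper: both arguments reduce to the case where $\mathbf{p}$ and $\mathbf{q}$ differ in a single coordinate (the paper by induction on the number of differing entries, you via an explicit staircase $\mathbf{p}_0,\ldots,\mathbf{p}_k$), and then both invoke the Critical Value Lemma together with the definition of critical $i$-coordinate to handle that base case. One small omission: the paper's definition of \emph{interval} requires the subset to be connected as well as convex, and you only verify convexity; the paper explicitly notes that each $X^i_{j_i}$ is convex and connected and that these properties pass to products, so you should add a word to cover this.
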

    \begin{proof}

        Fix $I = (\tuple{j}{k})\in \Lambda$. 
        Since each $X^i_{j}$ is a convex and connected subset of $P$ we have that $S_{I}$ is a convex and connected subset of $P^k$, hence is an interval.
        We will show that $F[p \leq q]$ is an isomorphism for any $p \leq q \in S_I$ and thus $S_I$ is constant.
        If all $j_i$'s are even, $S_{I}$ is a point and $F$ is constant on a point so we are done. 
        Thus assume $I$ has at least one odd index and take $p < q \in S_I$.

        We will do induction on the number of differing entries of $p$ and $q$. 
        If $p$ and $q$ share all coordinates but one, we can write 
        $p = \ncoord{p}{k}$ and 
        $q = \anglebracket{p^1, \ldots, p^{i-1}, q^i ,p^{i+1}, \ldots,  p^k}$ 
        with $p^i < q^i$ and $j_i$ odd, as denoted below: 
        \[\begin{tikzcd}[cramped]
    	{\anglebracket{p^1, \ldots, p^{i-1}, q^i ,p^{i+1}, \ldots,  p^k}} \\
    	\\
    	{\ncoord{p}{k}}.
    	\arrow[from=3-1, to=1-1]
        \end{tikzcd}\]
        If $F[p \leq q]$ is not an isomorphism, then there is a critical value 
        $\anglebracket{p^1, \ldots, p^{i-1}, t ,p^{i+1}, \ldots,  p^k} $  
        of $F(\anglebracket{p^ 1, \ldots, p^{i-1}, \cdot ,p^{i+1}, \ldots,  p^k}): P \to C$ 
        with $x^i_{j_i - 1} < p_i \leq t \leq q_i < x^i_{j_i + 1}$.  
        However, the latter statement 
        contradicts  the exhaustive list of critical $i$-coordinates. 
        Hence $F[p \leq q]$ is isomorphism.

        Now assume $p$ and $q$ differ in  $n$ coordinates, and assume that any points differing in $n-1$ coordinates result in an isomorphism.
        Write  $p = \ncoord{p}{k} \leq  \ncoord{q}{k} = q$ in $S_I$ let $i$ be one of the coordinates that differ between the two. 
        Without loss of generality, assume $p^i < q^i$ and set   
        \[r = \anglebracket{p^1, \ldots, p^{i-1},q^i, p^{i+1}, \ldots, p^k}.\] 
        so we have the diagram
        \[\begin{tikzcd}[cramped]
    	{\anglebracket{p^1, \ldots, p^{i-1},q^i, p^{i+1}, \ldots, p^k}} && {\ncoord{q}{k}} \\
    	\\
    	{\ncoord{p}{k}}
    	\arrow[from=1-1, to=1-3]
    	\arrow[from=3-1, to=1-1]
    	\arrow[from=3-1, to=1-3]
        \end{tikzcd}\]
        By induction, $F[p \leq r]$  and $F[r \leq q]$ are isomorphisms. 
        
    Then, since $F[p \leq q] = F[r \leq q] \circ F[p \leq r] $ we know that $F[p \leq q]$ is an isomorphism. 
    Therefore, $F$ is constant on $S_{I}$.
    \end{proof}

We note that the indices in $\Lambda$ used to enumerate the $S_I$ are built to be compatible with the ordering of the $P^k$. 
We make this explicit in the following lemma.

\begin{lemma}
    \label{lemma:compatible_order}
    The product order on $\Lambda \subseteq \N^k$ used to index the sets $S_{I}$ is compatible with the order of $P^k$. Explicitly, the following statements hold:
    \begin{enumerate}[(a)]
        \item If $p \leq q \in P^k$ with $p \in S_{I}$ and $q \in S_{I'}$, then $I \leq_{\N^k} I'$. 
        \item If $I \leq_{\N^k} I'$ and  $q \in S_{I'}$, there is a $p \in S_{I}$  such that  $p \leq q$.
    \end{enumerate} 
\end{lemma}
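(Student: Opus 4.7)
My plan is to reduce both parts to a single coordinate-wise ordering fact about the pieces $X^i_j$ of $P$ and then perform a short case analysis on the parity of the indices. The preliminary observation I would establish first is that for each fixed coordinate $i$, the sets $X^i_0, X^i_1, \ldots, X^i_{2m_i+1}$ form an ordered partition of $P$: if $j < j'$, every element of $X^i_j$ is strictly smaller than every element of $X^i_{j'}$. This is immediate because the collection alternates singletons $\{x^i_{2\ell}\}$ with open intervals $(x^i_{2\ell}, x^i_{2\ell+2})$ along the chain $x^i_0 < x^i_2 < \cdots < x^i_{2m_i+2}$. By contraposition, whenever $a \leq b$ with $a \in X^i_j$ and $b \in X^i_{j'}$, one has $j \leq j'$.

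Statement (a) then follows immediately: given $p \leq q$ in $P^k$ with $p \in S_I$ and $q \in S_{I'}$, I apply the coordinate comparison $p^i \leq q^i$ together with the preliminary fact to each $i$, obtaining $j_i \leq j'_i$ for every $i$, and hence $I \leq_{\N^k} I'$.

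For statement (b) I would build $p \in S_I$ coordinate by coordinate so that $p \leq q$. Fixing $i$, we have $j_i \leq j'_i$ with $q^i \in X^i_{j'_i}$, and must produce $p^i \in X^i_{j_i}$ with $p^i \leq q^i$. I would split into three cases: if $j_i$ is even, set $p^i = x^i_{j_i}$, which is $\leq q^i$ by the ordering fact (with equality when $j_i = j'_i$); if $j_i$ is odd and $j_i = j'_i$, set $p^i = q^i$; and if $j_i$ is odd with $j_i < j'_i$, note that $j_i+1$ is even with $j_i+1 \leq j'_i$, so $x^i_{j_i+1} \leq q^i$ by the ordering fact, and any choice of $p^i$ in the open interval $X^i_{j_i} = (x^i_{j_i-1}, x^i_{j_i+1})$ satisfies $p^i < x^i_{j_i+1} \leq q^i$. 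Taking $p = \anglebracket{p^1, \ldots, p^k}$ yields the desired element of $S_I$. The only real subtlety I anticipate is the nonemptiness of $X^i_{j_i}$ in the third case, which is exactly the condition that $S_I$ be nonempty; this is implicit in the statement of (b) since we are asked to produce an element of $S_I$, and is automatic whenever $P$ is dense between consecutive critical coordinates.
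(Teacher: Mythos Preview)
Your proof is correct. Part (a) is essentially identical to the paper's argument: both reduce to the coordinate-wise fact that $p^i \leq q^i$ with $p^i \in X^i_{j_i}$ and $q^i \in X^i_{j'_i}$ forces $j_i \leq j'_i$.

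For part (b), your approach differs from the paper's in a genuine but minor way. You construct $p \in S_I$ directly, one coordinate at a time, via a parity case analysis on $j_i$. The paper instead starts from an \emph{arbitrary} element $p \in S_I$, forms the ``coordinate minimum'' $r$ with $r^i = p^i$ on $U = \{i \mid p^i \leq q^i\}$ and $r^i = q^i$ on $V = \{i \mid p^i > q^i\}$, and then argues that $r$ remains in $S_I$ by applying part (a) to $r \leq p$ and combining with the hypothesis $I \leq I'$. The paper's route avoids the explicit parity split at the cost of a small indirect argument to show $r \in S_I$; your route is more explicit and constructive but requires three cases. Both approaches rely equally on $S_I$ being nonempty (the paper silently assumes it when it writes ``take $p \in S_I$''), so your closing caveat about nonemptiness in the odd case is appropriate and no more of a gap than what the paper itself leaves implicit.
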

\begin{proof}
    \textit{(a)} Let $I = (\tuple{j}{k})$ and $I' = (\tuple{j'}{k})$ be given elements in $\Lambda$. 
    On one hand assume that $p \leq q \in P^k$ with $p \in S_{I}$ and $q \in S_{I'}$. 
    Then, for each $1 \leq i \leq k$, $p^i \in X^i_{j_i}$ and $q^i \in  X^i_{j'_i} $ and $p^i \leq q^i$. 
    By construction of the $X_j$'s we can check that this implies that $j_i \leq j'_i$.

    \textit{(b)} Assume that $I \leq I'$ and choose $q \in S_I'$. 
    If $I  =I'$ we are done since $q$ is comparable to itself. 
    Hence, suppose that $I < I'$ and take $p \in S_I$. If $q < p$. 
    By part \textit{(a)}, $I' \leq I$, but this contradicts the assumption that $I < I'$. 
    So if $p$ and $q$ are comparable, then $p \leq q$ as desired.
    
        If $p$ and $q$ are not comparable, consider the non-empty sets 
        \mbox{$U = \{ i \mid p^i \leq q^i\}$}  and $V= \{ i \mid p^i > q^i\}$. 
        Define $r \in P^k$ with $r^i = p^i$ if $i \in U$ and $r^i = q^i $ if $i \in V$. 
        By construction we have that $r <p$ and $r<q$ as shown in the following diagram:
        \[\begin{tikzcd}[cramped]
           {\ncoord{q}{k}} &&  \\
        \\
        {\ncoord{r}{k}} && {\ncoord{p}{k}}.
        \arrow[from=3-1, to=1-1]
        \arrow[from=3-1, to=3-3]
        \end{tikzcd}\]
        
         Let $I'' \in \Lambda$ be such that  $r \in S_{I''}$; we will show that $I'' = I$, and thus $r \in S_{I}$ which completes the proof. 
        If $i \in U $ then $j_i'' = j_i$ because $X^i_{j_i''} \ni r^i = p^i \in X^i_{j_i}$. 
        Similarly, for $i \in V $ we have that $j_i'' = j_i'$ given that $X^i_{j_i''} \ni r^i = q^i \in X^i_{j_i' }$. 
        Now, given that $r < p$ and by part \textit{(a)}, this implies that $I'' \leq I$. 
        Hence, if $i \in V $,
        $j_i \leq j'_i = j_i'' \leq j_i$. 
        Therefore, $j_i'' = j_i$ if $i \in V$ as well, so $I'' = I$.      %
\end{proof}

Note that the collection $\{S_{I} \mid I \in \Lambda\}$ covers all of $P^k$.
We will call $\{S_{I} \mid I \in \Lambda\}$ the set of constant cubes of $F$. 
We can define similarly $\Lambda'$ for the indices of the critical values of $G$ and write $\{R_{J} \mid J \in \Lambda'\}$ for the set of constant cubes of $G$ of the form  $Y^1_{j_1} \times \cdots \times Y^k_{j_k}$.

\subsubsection{Subdividing into intervals that map to constant intervals}

Next, we will subdivide each $S_{I}$  into intervals that map into constant intervals of $G$ via $\T_\e$. 
For each $I \in \Lambda$ and $J \in \Lambda'$, define 
\[ \mathcal{I}_{J}^{I} = S_I \cap \{ p \in P^k \mid \T_\e  p \in R_J \}. \]
Analogously, we define 
$$ \mathcal{J}_{I}^{J} = R_J \cap \{ p \in P^k \mid \T_\e p \in S_I\}$$
for the subdivision of the constant intervals of $G$ mapping into constant intervals of $F$ via $\cT_\e$.

\begin{remark}
Because we assumed that $\T_\e$ is line-preserving, $\mathcal{I}_{J}^{I}$ can be written only in terms of terms of the form $\inf \mathcal{I}_{J'}^{I}$ as follows. 
Consider $U = \{ 1 \leq i \leq k \mid p^i \neq q^i \text{ for some } p,q \in  \mathcal{I}_{J}^{I} \}$. 
Let $I =(\tuple{j}{k})$ and $J = (\tuple{\hat{j}}{k})$. 
For each $i \in U$ take the index 
$$J_i = (\tuple{\hat{j}}{i-1}, \hat{j}_i + 1, \tuple[i+1]{\hat{j}}{k} ).$$ 
Then
\begin{equation*}
    \mathcal{I}_J^I = \left\{\begin{array}{cc}
         \inf \mathcal{I}_{J}^{I} & \text{ if } U = \emptyset  \\
        \displaystyle \bigtimes_{i \in U} \left\lfloor \: (\inf \mathcal{I}_{J}^{I})^i , (\inf \mathcal{I}_{J_i}^{I})^i \: \right\rceil&   \text{ if } U \neq \emptyset \text{ and } \inf \mathcal{I}_{J_i}^{I} \neq \emptyset \\
        \displaystyle \bigtimes_{i \in U} \left\lfloor \: (\inf \mathcal{I}_{J}^{I})^i , x^i_{j_i +1} \: \right\rceil&   \text{ if } U \neq \emptyset \text{ and } \inf \mathcal{I}_{J_i}^{I} = \emptyset
    \end{array}
    \right.
\end{equation*}

Where $(a)^i$ denotes the $i$-th coordinate of $a \in P^k$. The symbols $\lfloor$ and $\rceil$ denote that the lower and upper endpoints are closed or open according to whether they are included in  $\mathcal{I}_J^I$, respectively.
Hence, when  $\T_\e$ is line-preserving the intervals $\mathcal{I}_J^I$ and $\mathcal{J}_{I}^{J}$ are cubes.
\end{remark}

\begin{lemma}
   \label{lemma:interval_order} 
    Let $I \leq I' \in \Lambda$ and $J < J' \in \Lambda'$, and suppose that  
    $\mathcal{I}_{J}^{I}$ and $\mathcal{I}_{J'}^{I'}$ are non-empty. 
    Then, for each $q \in \mathcal{I}_{J'}^{I'}$ there is $p \in \mathcal{I}_{J}^{I}$ with $p \leq q$.
\end{lemma}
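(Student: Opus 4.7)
The plan is to construct $p$ as a coordinate-wise minimum between $q$ and any witness of $\mathcal{I}_J^I$. Since $\mathcal{I}_J^I$ is non-empty by hypothesis, I would fix some $p_0 \in \mathcal{I}_J^I$ and, given $q \in \mathcal{I}_{J'}^{I'}$, define
\[
p = \langle \min(p_0^1, q^1), \ldots, \min(p_0^k, q^k)\rangle,
\]
which is well-defined because $P$ is a linear order. By construction $p \leq q$ in the product order, so it remains to verify that $p \in \mathcal{I}_J^I$, i.e.\ that $p \in S_I$ and $\T_\e p \in R_J$.

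To check $p \in S_I$ coordinate-wise, write $I = (j_1,\ldots,j_k)$ and $I' = (j_1',\ldots,j_k')$, so $I \leq I'$ gives $j_i \leq j_i'$ for every $i$. We have $p_0^i \in X^i_{j_i}$ and $q^i \in X^i_{j_i'}$, and a short case analysis on the parities of $j_i$ and $j_i'$ shows $\min(p_0^i,q^i) \in X^i_{j_i}$: when $j_i < j_i'$, the ordering of the critical coordinates $x^i_\bullet$ forces $p_0^i \leq q^i$ (the former lies strictly below $x^i_{j_i+1}$ while the latter sits at or above it), so the minimum is $p_0^i \in X^i_{j_i}$; when $j_i = j_i'$, both $p_0^i$ and $q^i$ lie in the common convex set $X^i_{j_i}$ (a singleton or open interval), so its minimum stays inside.

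To check $\T_\e p \in R_J$, I would exploit that the $\T_\e$-assignment is line-preserving: by Lem.~\ref{lemma:line-pres}, $\T_\e = \langle \T_\e^1,\ldots,\T_\e^k\rangle$ with each $\T_\e^i$ a translation on $P$, hence monotone. Since monotone maps on a linear order commute with binary $\min$, we have
\[
(\T_\e p)^i \;=\; \T_\e^i\bigl(\min(p_0^i,q^i)\bigr) \;=\; \min\bigl(\T_\e^i p_0^i,\; \T_\e^i q^i\bigr).
\]
Now $\T_\e p_0 \in R_J$ and $\T_\e q \in R_{J'}$ give $\T_\e^i p_0^i \in Y^i_{\hat j_i}$ and $\T_\e^i q^i \in Y^i_{\hat j_i'}$ with $\hat j_i \leq \hat j_i'$ (as $J < J'$), and the identical parity case analysis shows $\min \in Y^i_{\hat j_i}$. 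Assembling all coordinates yields $\T_\e p \in R_J$, hence $p \in \mathcal{I}_J^I$ with $p \leq q$ as required.

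There is no real obstacle here; the only work is the two-step parity case analysis, which is carried out identically for the $X$-sets and the $Y$-sets. The essential conceptual point is that line-preservation reduces the multi-parameter statement to $k$ independent one-dimensional checks, in the same spirit as Lem.~\ref{lemma:compatible_order}(b).
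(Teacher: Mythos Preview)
Your proof is correct and follows essentially the same approach as the paper: both take a witness $p_0 \in \mathcal{I}_J^I$, form the coordinate-wise minimum with $q$, and verify membership in $\mathcal{I}_J^I$ using line-preservation together with the interval structure of the $X^i_{j_i}$ and $Y^i_{\hat j_i}$. The paper phrases the construction via the sets $U,V$ of Lemma~\ref{lemma:compatible_order}(b) and handles the comparable case separately, whereas you take the minimum directly and invoke the observation that monotone maps commute with $\min$; these are cosmetic differences only.
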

    \begin{proof}
    Assume that $J = (\tuple{j}{k}) < (\tuple{j'}{k}) =  J' $. 
    Let $q \in \mathcal{I'}_{J'}^{I}$ and $p \in \mathcal{I}_{J}^{I}$ be given. 
    If $q < p$, then $\cT_\e q \leq \cT_\e p$, but this contradicts the assumption that $J < J'$. So if $p$ and $q$ are comparable, then $p \leq q$ as desired.
    
    If $p$ and $q$ are not comparable, define $r$ using the sets $U$ and $V$ as in the proof of Lemma \ref{lemma:compatible_order}(b). 
    Since $p\in S_I$, $r \in S_I$ by the lemma.  
    Let $J'' \in \Lambda'$ be such that  $\T_\e r \in R_{J''}$, so by construction, $r \in \mathcal{I}_{J''}^I$.
  
    Since $\T_\e$ is line-preserving and by using Lemma \ref{lemma:line-pres}, for each $1 \leq i \leq k$ there are translations $\T_\e^i\colon P \to P$ such that 
        \[
        \T_\e r = (\T_\e^1(r^1), \ldots, \T_\e^k(r^k) ).
        \]
    This implies that if $i \in U $, $(\T_\e(r))^i = (\T_\e(p))^i$; and if $i \in V $, $(\T_\e(r))^i = (\T_\e(q))^i$. 
    Hence, by the same argument from Lemma \ref{lemma:compatible_order}, we have that $\T_\e r \in R_J$.  
    Thus $r \in \mathcal{I}_{J}^{I}$ completing the proof.
    \end{proof}

\subsubsection{Choosing representatives for each interval}

In Section \ref{ssec:Linear_orders_opt} where we studied modules over $P = P^1$, we could choose a representative $\alpha_{i,k}$ in the constant intervals of $F$ that map to constant intervals of $G$ since all elements in these intervals are comparable to their representative. 
However, when we have modules over $P^k$ for $k\geq 2$, this is no longer the case since we can find elements in a cube $I$ that are not comparable to each other. 
For example, consider $I= (0,1) \times (0,1) \in \R^2$. 
For all $\anglebracket{x,y} \in I$ there is $\e > 0$ such that $\anglebracket{x-\e,y + \e } \in I$ and $\anglebracket{x-\e,y + \e }$ is not comparable to $\anglebracket{x,y}$. 
Therefore, we need a different strategy for finding representatives in $P^k$.

For a given flow $\T$ over $P^k$, the next definition gives a poset $\hat{P}$ by constructing symbols $\alpha^{I}_{J}$ that will be used to represent each $\mathcal{I}_{J}^{I}$. 
Specifically, the elements of $\hat{P}$ will be written as  $\mathcal{S}\alpha^{I}_{J}$ for each  $\mathcal{S} \in \Flows_{\T}$  (Def.~\ref{def:flowsCategory}). 
While the actual  evaluation of  $\mathcal{S}$ on $\alpha^{I}_{J}$ is not defined since $\T$ is not defined over $\hat{P}$, only over $P^k$, we will use it to symbolically represent elements that get mapped to the inverse limit of the constant intervals.

\begin{definition}
Let 
$ \hat{P} = \{ \mathcal{S} \alpha^{I}_{J} \mid \mathcal{I}^I_J \neq \emptyset \text{ and } \mathcal{S} \in  \Flows_{\T}\} $. 
Define the relation $\leq$ in  $\hat{P} $ by the following:
    \begin{itemize}
        \item If $I \leq I'$ and $J \leq J'$, for all  $\mathcal{S} \in \Flows_{\T}$, 
        then $\mathcal{S} \alpha^{I}_{J} \leq \mathcal{S} \alpha^{I'}_{J'} $. 
        \item For all $\mathcal{S} \leq \mathcal{S}'$ in $\Flows_{\T}$, $\mathcal{S} \alpha^{I}_{J} \leq \mathcal{S}' \alpha^{I}_{J} $.
    \end{itemize}
\end{definition}
Note that for ease of notation, we will write the element $\alpha^{I}_{J} = \Id \; \alpha^{I}_{J} $. 
Also, since $ \Id \in \cS$, the first assumption implies that if $I \leq I'$ and $J \leq J'$, then $\alpha^{I}_{J} \leq \alpha^{I'}_{J'}$.

With this construction, we define the map $\hat{F}: \hat{P} \to C$ by using the inverse limit, explicitly
\begin{equation*}
     \hat{F}(\mathcal{S} \alpha^{I}_{J}) = \varprojlim (F(\mathcal{S} p) \mid p \in \mathcal{I}_{J}^{I}).
\end{equation*}

If the index $I$ is clear by context or not necessary we write $\alpha_J$ instead of $\alpha^I_J$.

We next show that this map is indeed functorial.

\begin{theorem}
\label{thm:hatF_is_functor}
    The map $\hat{F}: \hat{P} \to C$ is a functor.
\end{theorem}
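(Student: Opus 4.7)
The plan is to verify in turn that $\hat{F}$ assigns a morphism to every comparable pair of objects in $\hat{P}$, that identities go to identities, and that composition is preserved. Since $\hat{P}$ is thin, each pair of comparable objects admits at most one morphism, so the entire construction amounts to producing a single arrow in $C$ for every inequality in $\hat{P}$, and the functor axioms will then follow automatically by the uniqueness of morphisms induced by the universal property of the target limit.

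For the core step, fix an inequality $\mathcal{S}\alpha^{I}_{J} \leq \mathcal{S}'\alpha^{I'}_{J'}$ in $\hat{P}$, which by definition means $\mathcal{S} \leq \mathcal{S}'$ in $\Flows_{\T}$ together with $I \leq I'$ in $\Lambda$ and $J \leq J'$ in $\Lambda'$. To define $\hat{F}[\mathcal{S}\alpha^{I}_{J} \leq \mathcal{S}'\alpha^{I'}_{J'}]$ I will construct a cone from $\hat{F}(\mathcal{S}\alpha^{I}_{J})$ over the diagram $\{F(\mathcal{S}'p') \mid p' \in \mathcal{I}^{I'}_{J'}\}$ and then invoke the universal property of $\hat{F}(\mathcal{S}'\alpha^{I'}_{J'})$ to obtain a unique morphism. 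For each $p' \in \mathcal{I}^{I'}_{J'}$, Lemma \ref{lemma:interval_order} gives some $p \in \mathcal{I}^{I}_{J}$ with $p \leq p'$; combined with $\mathcal{S} \leq \mathcal{S}'$ and the monotonicity of translations, this yields $\mathcal{S}p \leq \mathcal{S}'p'$ in $P^k$. Letting $\pi_p \colon \hat{F}(\mathcal{S}\alpha^{I}_{J}) \to F(\mathcal{S}p)$ denote the canonical projection from the inverse limit, I take the $p'$-component of the cone to be $F[\mathcal{S}p \leq \mathcal{S}'p'] \circ \pi_p$.

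The main obstacles are showing that this assignment is independent of the choice of $p$ and that it actually defines a cone. For independence, given two choices $p, q \in \mathcal{I}^{I}_{J}$ with $p,q \leq p'$, I would produce a common lower bound $r \in \mathcal{I}^{I}_{J}$: the coordinate-wise meet $r = p \wedge q$ lies in $S_I$ by convexity of its factors, and because $\T_\e$ is line-preserving (Lemma \ref{lemma:line-pres}) and monotone maps on a linear order commute with $\min$, we get $\T_\e r = \T_\e p \wedge \T_\e q \in R_J$, hence $r \in \mathcal{I}^{I}_{J}$. The projection compatibility $\pi_p = F[\mathcal{S}r \leq \mathcal{S}p] \circ \pi_r$ and the Thin Lemma then let me rewrite both $F[\mathcal{S}p \leq \mathcal{S}'p'] \circ \pi_p$ and $F[\mathcal{S}q \leq \mathcal{S}'p'] \circ \pi_q$ as the common composite $F[\mathcal{S}r \leq \mathcal{S}'p'] \circ \pi_r$. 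The cone condition for $q' \leq p'$ in $\mathcal{I}^{I'}_{J'}$ follows similarly: using the lemma I can pick a single $p \in \mathcal{I}^I_J$ below $q'$ (hence below $p'$), after which $F[\mathcal{S}'q' \leq \mathcal{S}'p'] \circ F[\mathcal{S}p \leq \mathcal{S}'q'] \circ \pi_p = F[\mathcal{S}p \leq \mathcal{S}'p'] \circ \pi_p$ by Corollary \ref{coro_split}.

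Finally, I will confirm the functor axioms. Identities correspond to $\mathcal{S} = \mathcal{S}'$, $I = I'$, $J = J'$, for which I can take $p = p'$ in the construction: the induced map becomes $\pi_{p'}$ for every $p'$, which is exactly the cone of $\hat{F}(\mathcal{S}\alpha^{I}_{J})$ over itself, so by uniqueness it is mediated by the identity morphism. For composition, given a chain $\mathcal{S}\alpha^{I}_{J} \leq \mathcal{S}'\alpha^{I'}_{J'} \leq \mathcal{S}''\alpha^{I''}_{J''}$, both the composite $\hat{F}[\mathcal{S}'\alpha^{I'}_{J'} \leq \mathcal{S}''\alpha^{I''}_{J''}] \circ \hat{F}[\mathcal{S}\alpha^{I}_{J} \leq \mathcal{S}'\alpha^{I'}_{J'}]$ and the directly constructed $\hat{F}[\mathcal{S}\alpha^{I}_{J} \leq \mathcal{S}''\alpha^{I''}_{J''}]$ satisfy the cone equations defining the universal morphism into $\hat{F}(\mathcal{S}''\alpha^{I''}_{J''})$, and hence agree by uniqueness. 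This establishes that $\hat{F}$ is a functor.
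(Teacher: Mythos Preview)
Your argument is correct and follows the same strategy as the paper: construct each morphism $\hat{F}[\hat p \leq \hat q]$ by exhibiting a cone from the source into the diagram defining the target inverse limit and invoking the universal property, then deduce the functor axioms from uniqueness. Your treatment is in fact slightly more careful than the paper's in two places: you handle a general inequality $\mathcal{S}\alpha^{I}_{J} \leq \mathcal{S}'\alpha^{I'}_{J'}$ in one pass rather than splitting into the two generating cases, and you explicitly verify that the cone leg is independent of the choice of $p \in \mathcal{I}^{I}_{J}$ via the coordinate-wise meet, a point the paper leaves implicit.
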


\begin{proof}
First, we need to prove that for $\hat{p} \leq \hat{q}$ in $\hat{P}$, $\hat{F}(\hat{p}) \to \hat{F}(\hat{q})$ exists. 
Let us start by finding the map $F(\alpha_J^I) \to F(\alpha_{J'}^{I'})$ when $I \leq I'$ and $J \leq J'$. 
Recall that in Lemma \ref{lemma:interval_order}, if we take $q \in \mathcal{I}_{J'}^{I'}$ there is $ p \in   \mathcal{I}_J^{I}$ with $p \leq q$. 
Take $q \leq q' $ in $\mathcal{I}_{J'}^{I'}$, then $p \leq p' $ in $\mathcal{I}_{J}^{I}$.
Given that $\hat{F}(\alpha_{J}^I)$ is the inverse limit and following the diagram on the left of Fig.~\ref{fig:dgms_defining_hatF} we have that
          \begin{equation*}
            F[p' \leq q'] \circ (F[p \leq p'] \circ F[\alpha_{J} \leq p]) = F[p' \leq q'] \circ (F[\alpha_{J} \leq p']). 
          \end{equation*}
          By the Thin Lemma we know that  
          $
          F[p' \leq q'] \circ F[p \leq p'] = F[q \leq q'] \circ F[p \leq q]$. 
          Plugging this equality on the left side of the  equation yields 
        \begin{equation*}
            F[q \leq q'] \circ F[p \leq q] \circ F[\alpha_{J} \leq p] = F[p' \leq q'] \circ F[\alpha_{J} \leq p'].
        \end{equation*}
        Setting $\gamma_q = F[p \leq q] \circ F[\alpha_{J} \leq p] $, this is the same as writing 
         $F[q \leq q'] \circ \gamma_q = \gamma_q'$.

    \begin{figure}
    \centering
    \begin{minipage}{0.45\textwidth}
       \[
       \adjustbox{scale = .9, center}{
       \begin{tikzcd}[cramped]
        & {\hat{F}(\alpha_{J}^I)} \\
        & {\hat{F}(\alpha_{J'}^{I'})} \\
        {F(q)} && {F(q')} \\
        {F(p)} && {F(p')}
        \arrow[dashed, from=1-2, to=2-2]
        \arrow[from=1-2, to=3-1]
        \arrow[from=1-2, to=3-3]
        \arrow[from=2-2, to=4-1]
        \arrow[from=2-2, to=4-3]
        \arrow[dotted, from=3-1, to=3-3]
        \arrow[from=3-1, to=4-1]
        \arrow[from=3-3, to=4-3]
        \arrow[from=4-1, to=4-3]
        \end{tikzcd}
        }
        \]
    \end{minipage}
    \qquad
    \begin{minipage}{0.45\textwidth}
        \[
       \adjustbox{scale = .9, center}{
        \begin{tikzcd}[cramped]
            & {\hat{F}(\cS \alpha_{J})} \\
            & {\hat{F}(\cS'\alpha_{J})} \\
            {F(\cS p)} && {F(\cS p')} \\
            {F(\cS' p)} && {F(\cS' p')}
            \arrow[dashed, from=1-2, to=2-2]
            \arrow[from=1-2, to=3-1]
            \arrow[from=1-2, to=3-3]
            \arrow[from=2-2, to=4-1]
            \arrow[from=2-2, to=4-3]
            \arrow[dotted, from=3-1, to=3-3]
            \arrow[from=3-1, to=4-1]
            \arrow[from=3-3, to=4-3]
            \arrow[from=4-1, to=4-3]
            \end{tikzcd}
            }\]
    \end{minipage}
    \caption{Diagrams for defining $\hat F$ in the proof of Thm.~\ref{thm:hatF_is_functor}.}
    \label{fig:dgms_defining_hatF}
\end{figure}
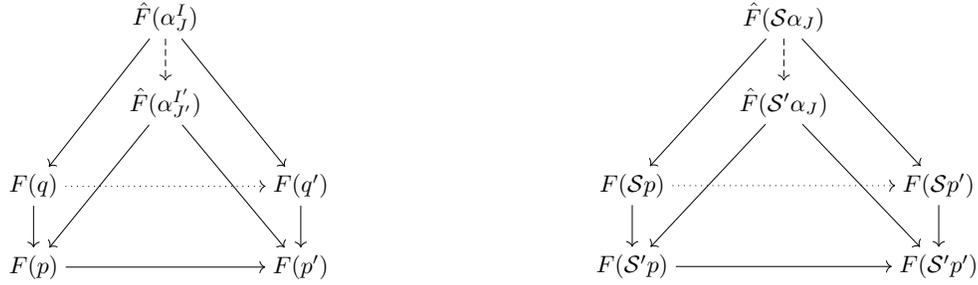

        Hence, by universal property of  $\hat{F}(\alpha_{J'}^{I'})$ we obtain a unique morphism $\hat{F}(\alpha_{J}^I) \to \hat{F}(\alpha_{J'}^{I'})$ that gives rise to $\hat{F}[\alpha_{J}^{I} \leq \alpha_{J'}^{I}]$. Moreover,  from this property we obtain that 
         \begin{equation}
            F[p \leq q] \circ F[\alpha_{J}^{I} \leq p] = F[\alpha_{J'}^{I'} \leq q] \circ \hat{F}[\alpha_{J}^{I} \leq \alpha_{J'}^{I'}] 
            \label{eq_F_commute}
        \end{equation}
        for any $p \leq q $ with in $p \in \mathcal{I}_{J}^{I} $ and  $q \in \mathcal{I}_{J'}^{I'} $.  By a similar argument, for any $\cS \in \Flows_\T$, we can find the morphisms $\hat{F}[\mathcal{S} \alpha_{J}^{I} \leq \mathcal{S}\alpha_{J'}^{I'}]$ .
       
        Similarly for all for all $\mathcal{S} \leq  \mathcal{S}'\in \Flows_{\T}$ we have the morphisms $\hat{F}[\mathcal{S} \alpha_{J} \leq \mathcal{S}'\alpha_{J}]$ by the universal property of $F(\mathcal{S'} \alpha_{J})$ as shown in the diagram on the right of Fig.~\ref{fig:dgms_defining_hatF}.
        Additionally, the uniqueness from the universal property ensures that $\hat{F}$ is compatible with the identity morphism and the associative property of the composition.
    \end{proof}

Symmetrically, we define the functor $\hat{G}:\hat{P} \to C$ given by
\begin{equation*}
     \hat{G}(\mathcal{S} \alpha^{I}_{J}) = \varprojlim (G(\mathcal{S} p) \mid p \in \mathcal{I}_{J}^{I})
\end{equation*}
with the canonical morphism $G[ \mathcal{S} \alpha^{I}_{J} \leq \mathcal{S} p ]:\hat{G}(\mathcal{S} \alpha^{I}_{J}) \to  G(\mathcal{S} p)$.

Note that for each $p \in \mathcal{I}_{J}^{I}$ we have a canonical morphism $\hat{F}(\mathcal{S} \alpha^{I}_{J}) \to F(\mathcal{S} p) $. 
We will abuse  notation and call this morphism $F[\mathcal{S} \alpha^{I}_{J} \leq \mathcal{S} p]$.  

Assume we are given $(\phi,\psi)$, a constructible $\T_\e$-assignment between $F$ and $G$, defined by $\phi: F \to G\T_\e$ and $\psi: F \to G\T_\e$. 
Our next goal is to "expand" these collection to $\phi_{\alpha_J}: \hat{F} (\alpha_J) \to \hat{G} (\T_\e \alpha_J) $ and 
$\psi_{\T_\e \alpha_J}: \hat{G} (\T_\e \alpha_J) \to \hat{F} (\T_\e \T_\e \alpha_J)$ for all $\alpha_{J}^I$ in $\hat{P}$.
 
Consider the diagram on the left of Figure \ref{fig_universal_diag} where $p \leq q \in \mathcal{I}_{J}^{I}$. 
We can verify that the triangle with corners $\hat F(\alpha_J)$, $F(p)$, and $F(q)$ commutes since $\hat F(\alpha_{J})$ is the inverse limit of $F$. 
By assumption, $F$ is constant on $ \mathcal{I}_{J}^{I} \subseteq S_{I}$ and thus the bottom square commutes; equivalently $\Lpl^{p,q} = 0$.
Thus the diagram 
\begin{equation*}
\begin{tikzcd}
&\hat F(\alpha_J) \ar[dl, "{\phi_p \circ F[\alpha_{J} \leq p]}"'] \ar[dr,"{\phi_q \circ F[\alpha_{J} \leq q]}"] \\
G\cT_\e (p) \ar[rr,"{G[\T_\e p \leq \T_\e q]}"'] && G\cT_\e(q)
\end{tikzcd}
\end{equation*}
commutes, making it a cone. 
But since $\hat G(\cT_\e \alpha_J)$ is a inverse limit, this means there is a unique map 
 \mbox{$\phi_{\alpha_{J}}\colon  \hat{F}(\alpha_j) \to \hat{G}(\T_\e \alpha_{J}) $} making the left diagram of Fig.~\ref{fig:dgms_defining_hatF} commute, i.e.~with the property that
\begin{equation}
    \phi_p \circ F[\alpha_{J} \leq p] = G[\T_\e \alpha_{J} \leq \T_\e p ] \circ \phi_{\alpha_{J}}.
    \label{eq:phi_commute}
\end{equation}

Symmetrically, we can define $\psi$ in such a way that the diagram on the right of Figure \ref{fig_universal_diag} commutes. 
Hence, the following equation holds: 
\begin{equation}
    \psi_{\T_\e p} \circ G[\T_\e \alpha_J \leq \T_\epsilon p] = F[\T_\e \T_\e\alpha_J \leq \T_\e \T_\epsilon p] \circ \psi_{\T_\e \alpha_J}
    \label{eq:psi_commute}
\end{equation}
In this way, for $\alpha_J^I \leq \alpha_{J'}^{I'}$ we can define the parallelogram diagram  
$\Parallelograml_\phi(\alpha_{J}^{I},\alpha_{J'}^{I})$ as well as the triangle diagram 
$\triangled_{\phi,\psi}(\alpha_{J}^I) $ in the same manner as in Sec.~\ref{ssec:lossfunction}.

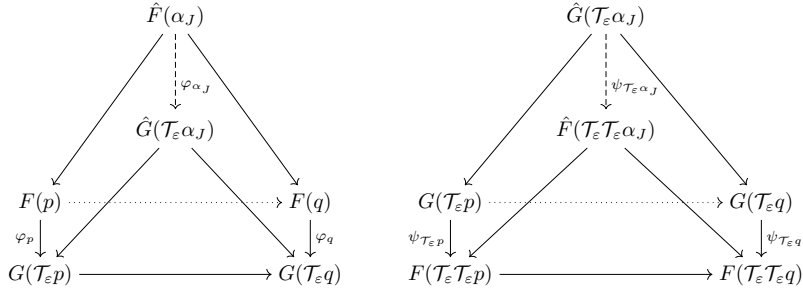
\begin{figure}
    \centering
    \adjustbox{scale=.8}{
    \begin{tikzcd}[cramped]
	& {\hat{F}(\alpha_{J})} \\
	\\
	& {\hat{G}(\T_\e \alpha_{J})} \\
	{F(p)} && {F(q)} \\
	{G(\T_\e p)} && {G(\T_\e q)}
	\arrow["{{\phi_{\alpha_{J}}}}"{pos=0.7}, dashed, from=1-2, to=3-2]
	\arrow[from=1-2, to=4-1]
	\arrow[from=1-2, to=4-3]
	\arrow[from=3-2, to=5-1]
	\arrow[from=3-2, to=5-3]
	\arrow[dotted, from=4-1, to=4-3]
	\arrow["\phi_p"',from=4-1, to=5-1]
	\arrow["\phi_q", from=4-3, to=5-3]
	\arrow[from=5-1, to=5-3]
\end{tikzcd} \hspace{24pt}
\begin{tikzcd}[cramped]
	& {\hat{G}( \T_\e \alpha_{J})} \\
	\\
	& {\hat{F}(\T_\e \T_\e \alpha_{J})} \\
	{G(\T_\e p)} && {G(\T_\e q)} \\
	{F(\T_\e \T_\e p)} && {F(\T_\e \T_\e q)}
	\arrow["{\psi_{\T_\e \alpha_{J}}}"{pos=0.7}, dashed, from=1-2, to=3-2]
	\arrow[from=1-2, to=4-1]
	\arrow[from=1-2, to=4-3]
	\arrow[from=3-2, to=5-1]
	\arrow[from=3-2, to=5-3]
	\arrow[dotted, from=4-1, to=4-3]
	\arrow["\psi_{\T_\e p}"',from=4-1, to=5-1]
	\arrow["\psi_{\T_\e q}", from=4-3, to=5-3]
	\arrow[from=5-1, to=5-3]
\end{tikzcd}
}
    \caption{Diagrams that show the existence of the maps $\hat{F}(\alpha_{J}) \to \hat{G}(\T_\e \alpha_{J}) $ and $\hat{G}(\alpha_{J}) \to \hat{F}( \T_\e\alpha_{J})$ by the universal property.}
    \label{fig_universal_diag}
\end{figure}

Note that all this work has been done to define $\phi$ and $\psi$ on $\hat P$, i.e.~constant intervals of $F$ that map to constant intervals of $G$. 
However, we need all this information for constant intervals of $G$ that map to constant intervals of $F$. 
Thus, in a similar fashion, we will define 
\[
\tilde{P} = \{ \mathcal{S} {\beta}^{I}_{J} \mid \mathcal{J}_I^J \text{ and } \mathcal{S} \in \Flows_{\T} \} 
\] 
on symbols $\beta_{J}^I$ with similar order relations to $\hat P$. 
Further, define $\tilde{G}\colon  \tilde{P} \to C$ and $\tilde{F}\colon  \tilde{P} \to C$ by
\begin{equation*}
    \tilde{G}(\cS \beta_{I}^J) = 
    \lim_{\longleftarrow} \left(G(\cS p) \mid p \in \mathcal{J}_I^ J\right) 
    \qquad  \text{and} \qquad
    \tilde{F}(\cS \beta_{I}^J) = \lim_{\longleftarrow} \left(F(\cS p) \mid p \in \mathcal{J}_I^ J\right).
\end{equation*}
Using the same arguments, both $\tilde{G}$ and $\tilde{F}$ are functors. We also have the maps \mbox{$\psi_{\beta_{I}}\colon  \tilde{G}(\beta_{I}) \to \tilde{F}(\T_\e \beta_{I})$} and \mbox{$\psi_{\T_\e \beta_{I}}\colon  \tilde{F}(\T_\e \beta_{I}) \to \tilde{G}(\T_\e \T_\e \beta_{I}) $}.
Putting all these together, we have diagrams as before: the parallelogram diagram $\Parallelogramr_\psi(\beta_{I}^{J},\beta_{I'}^{J'})$ and the triangle diagram $\triangleu_{\phi,\psi}(\beta_{I}^J) $. 
\subsubsection{Loss function of the representatives set}

Notice that even though we have not defined a flow $\T$ on $\hat{P}$, we can still define a merging distance (Defn.~\ref{def_merging})
$d^{\hat{F}}_{\hat{p}}$ 
on $\hat{F}(\hat{p})$  
for $\hat{p} \in \hat{P}$ given by 
\begin{equation*}
    d_{\hat p}^{\hat F}(a,b) = 
    \inf\{\e \geq 0 \mid \hat F[\hat p \leq T_{\e} \hat p](a) =  \hat F[\hat p \leq T_{\e} \hat p](b) \}.
\end{equation*}
Similarly we have the distance  $d^{\hat{G}}_{\hat{p}}$  on $\hat{G}(\hat{p})$. 
Hence, for $\alpha_J^I \leq \alpha_{J'}^{I'}$ we can define 
the loss $\Lpl^{\alpha_{J}^{I},\alpha_{J'}^{I'}}(\phi)$  
of the parallelogram diagram 
$\Parallelograml_\phi(\alpha_{J}^{I},\alpha_{J'}^{I})$  
as well as loss $\Ltd^{\alpha_{J}^I}(\phi,\psi )$ 
of the triangle diagram 
$\triangled_{\phi,\psi}(\alpha_{J}^I) $ in the same manner as Defn.~\ref{def:loss}.

\begin{lemma}
\label{lem:lossInequalitiesForHatP}
    For any $p \leq q$ with $p \in \mathcal{I}_J^I$ and $ q \in \mathcal{I}_{J'}^{I'}$ we have
    \begin{equation*}
         \Lpl^{p,q}(\phi) \leq \Lpl^{\alpha_{J}^I,\alpha_{J'}^{I'}}(\phi)
     \qquad \text{and} \qquad
        \Ltd^p(\phi,\psi) \leq \Ltd^{\alpha_{J}^I}(\phi,\psi).
    \end{equation*}
\end{lemma}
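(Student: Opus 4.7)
The plan is to establish both inequalities by lifting an arbitrary element $a \in F(p)$ to $\hat{F}(\alpha_{J}^I)$ along the canonical projection, rewriting the compositions appearing in $\Parallelograml_\phi(p,q)$ and $\triangled_{\phi,\psi}(p)$ as factorizations through corresponding morphisms on $\hat{F}$ and $\hat{G}$ using the commutation relations \eqref{eq_F_commute}, \eqref{eq:phi_commute}, and \eqref{eq:psi_commute}, and finally invoking an analog of Lemma \ref{lemma_preimage} to pass from merging distances on $\hat{G}$ (resp.~$\hat{F}$) down to merging distances on $G$ (resp.~$F$).

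For the parallelogram inequality, fix $a \in F(p)$. Since $F$ is constant on $S_I \supseteq \mathcal{I}_J^I$, the canonical projection $F[\alpha_J^I \leq p]\colon \hat{F}(\alpha_J^I) \to F(p)$ is an isomorphism, so $a$ lifts uniquely to $\tilde{a} \in \hat{F}(\alpha_J^I)$. Using \eqref{eq_F_commute} together with \eqref{eq:phi_commute}, and the analog of \eqref{eq_F_commute} for $\hat{G}$ that arises from the universal property, I would rewrite
\begin{align*}
    \phi_q \circ F[p \leq q](a) &= G[\T_\e \alpha_{J'}^{I'} \leq \T_\e q]\bigl(\phi_{\alpha_{J'}^{I'}} \circ \hat{F}[\alpha_J^I \leq \alpha_{J'}^{I'}](\tilde{a})\bigr), \\
    G[\T_\e p \leq \T_\e q] \circ \phi_p(a) &= G[\T_\e \alpha_{J'}^{I'} \leq \T_\e q]\bigl(\hat{G}[\T_\e \alpha_J^I \leq \T_\e \alpha_{J'}^{I'}] \circ \phi_{\alpha_J^I}(\tilde{a})\bigr).
\end{align*}
Setting $b$ and $b'$ to be the two elements of $\hat{G}(\T_\e \alpha_{J'}^{I'})$ appearing inside $G[\T_\e \alpha_{J'}^{I'} \leq \T_\e q]$, a direct adaptation of Lemma \ref{lemma_preimage} using the fact that the canonical projection $\hat{G} \to G$ intertwines the flow $\T$ (so $G[\T_\delta \T_\e \alpha_{J'}^{I'} \leq \T_\delta \T_\e q]$ collapses whenever $\hat{G}[\T_\e \alpha_{J'}^{I'} \leq \T_\delta \T_\e \alpha_{J'}^{I'}]$ does) yields $d^G_{\T_\e q}(\text{both images}) \leq d^{\hat{G}}_{\T_\e \alpha_{J'}^{I'}}(b,b') \leq \Lpl^{\alpha_J^I, \alpha_{J'}^{I'}}(\phi)$. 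Taking the supremum over $a$ completes this inequality.

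The triangle case proceeds identically in form: lift $a \in F(p)$ to $\tilde{a} \in \hat{F}(\alpha_J^I)$, then use \eqref{eq:phi_commute}, \eqref{eq:psi_commute}, and the analog of \eqref{eq_F_commute} for $\hat{F}$ to rewrite $F[p \leq \T_\e\T_\e p](a) = F[\T_\e\T_\e \alpha_J^I \leq \T_\e\T_\e p](c)$ and $\psi_{\T_\e p} \circ \phi_p(a) = F[\T_\e\T_\e \alpha_J^I \leq \T_\e\T_\e p](c')$, where $c = \hat{F}[\alpha_J^I \leq \T_\e\T_\e \alpha_J^I](\tilde{a})$ and $c' = \psi_{\T_\e \alpha_J^I} \circ \phi_{\alpha_J^I}(\tilde{a})$ both lie in $\hat{F}(\T_\e\T_\e \alpha_J^I)$. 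Bounding distances as before gives the claim.

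The main obstacle is the careful justification of two technical points: first, that $F[\alpha_J^I \leq p]$ is genuinely an isomorphism so the lift $\tilde{a}$ exists (this reduces to the observation that the inverse limit of a diagram of isomorphisms on a connected index poset is canonically identified with any of its values, which applies to $F|_{\mathcal{I}_J^I}$); and second, the promotion of Lemma \ref{lemma_preimage} to the mixed setting where the source lives in $\hat{G}$ (or $\hat{F}$) and the target in $G$ (resp.~$F$), which requires verifying that the canonical projections commute with the flow images $\T_\delta$ in a way compatible with the merging distance. Both are straightforward once the universal property of the inverse limit is invoked together with the definition of the order on $\hat{P}$.
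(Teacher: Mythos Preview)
Your proposal is correct and follows essentially the same approach as the paper: lift along the isomorphism $F[\alpha_J^I \leq p]$, use the commutation relations \eqref{eq_F_commute}, \eqref{eq:phi_commute}, \eqref{eq:psi_commute} to factor the two sides of each diagram through $\hat G(\T_\e\alpha_{J'}^{I'})$ (resp.\ $\hat F(\T_\e\T_\e\alpha_J^I)$), and then apply the analogue of Lemma~\ref{lemma_preimage}. The only minor difference is that for the triangle inequality the paper does not repeat the element chase but instead verifies the hypotheses of Lemma~\ref{lemma_ll_triang}(1) (with $\alpha_J^I$ playing the role of $p$ and your $p$ playing the role of $q$) and invokes that lemma directly; your direct argument is simply an inlining of that proof.
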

    \begin{proof}
        \begin{figure}
        \includegraphics[page=1,width=0.5\textwidth]{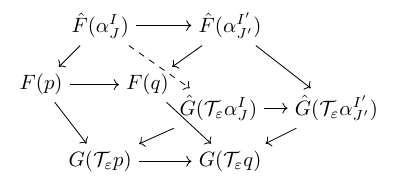}
        \hspace{24pt}
        \includegraphics[page=2, width=0.41\textwidth]{images/3d_parallelogram_diagram.pdf}

        \caption{Diagrams for the proof of Lem.~\ref{lem:lossInequalitiesForHatP}}
        \label{fig:DgmsForlossInequalitiesForHatP}
    \end{figure}
    Fix $p \leq q$ with $p \in \mathcal{I}_J^I$ and $ q \in \mathcal{I}_{J'}^{I'}$, which implies $\alpha_J^I \leq \alpha_{J'}^{I'}$.    
    Consider the diagram at left of Fig.~\ref{fig:DgmsForlossInequalitiesForHatP} with element chase given on the right. 
    The parallelogram formed only with versions of $F$ commutes by 
    Eq.~\eqref{eq_F_commute}. 
    A similar argument gives that the parallelogram formed by versions of $G$ also commutes. 
    The left and right parallelograms, namely 
    \begin{equation*}
    \begin{tikzcd}
    \hat{F}(\alpha_J^I) \ar[r, "{\phi_{\alpha_{J}^{I}}}"] \ar[d]  & \hat G(\cT_\e\alpha_J^I) \ar[d] \\
    F(p) \ar[r,"\phi_p"] & G(\cT_\e p)
    \end{tikzcd} 
    \hspace{24pt} \text{and} \hspace{24pt} 
    \begin{tikzcd}
    \hat{F}(\alpha_{J'}^{I'}) \ar[r, "{\phi_{\alpha_{J'}^{I'}}}"] \ar[d]  & \hat G(\cT_\e\alpha_{J'}^{I'}) \ar[d] \\
    F(q) \ar[r, "\phi_q"] & G(\cT_\e q)
    \end{tikzcd} 
    \end{equation*}
    commute by Eqn.~\eqref{eq:phi_commute} when 
    $\phi_{\alpha_J^I}$ and $\phi_{\alpha_{J'}^{I'}}$ are defined. 
    The two remaining parallelograms are $\Parallelograml_\phi(p,q)$ and $\Parallelograml_\phi(\alpha_{J},\alpha_{J'})$, which we call the back and front respectively.
    
    Then, consider $a' \in F(p)$ and $a = F[\alpha_{J} \leq p]^{-1}(a')$ and follow respectively their images on the back and front parallelograms as labeled in the right of Fig.~\ref{fig:DgmsForlossInequalitiesForHatP}.
    We have that $d_{\T_\e q}^G (c,c') \leq  d_{\T_\e \alpha_{J'}}^{\hat{G}} (b,b')$ by Lemma \ref{lemma_preimage}, so by definition of the loss this implies that 
    \[
        \Lpl^{p,q}(\phi) \leq \Lpl^{\alpha_{J},\alpha_{J'}}(\phi)
    \]
    as required.

    Now, let us bound the triangle loss. 
    Consider the diagram 
    \[\begin{tikzcd}[cramped]
	\hat{F}(\alpha_{J}) && F(p) && \hat{F}(\alpha_{J'}) && F(p') \\
	&& \hat{G}( \T_\e \alpha_{J}) && G(\T_\e p)
	\arrow["{F[\alpha_{J}\leq p]}", from=1-1, to=1-3]
	\arrow["{\phi_{\alpha_{J}}}"'{pos=0.3}, from=1-1, to=2-3]
	\arrow["{F[  \leq ]}", from=1-3, to=1-5]
	\arrow["{\phi_p}"'{pos=0.3}, from=1-3, to=2-5]
	\arrow["{F[ \alpha_{J'} \leq p' ]}", from=1-5, to=1-7]
	\arrow["{\psi_{\T_\e \alpha_{J}}}"'{pos=0.8}, shift right=2, from=2-3, to=1-5]
	\arrow[shorten <=5pt, from=2-3, to=2-5]
	\arrow["{\psi_{\T_\e p}}"'{pos=0.7}, shift right=2, shorten >=5pt, from=2-5, to=1-7]
\end{tikzcd}\]
where we have set $p' = \T_\e \T_\e p$ and $\alpha_{J}' = \T_\e \T_\e \alpha_{J}$.
The left parallelogram 
\begin{equation*}
\begin{tikzcd}
	\hat{F}(\alpha_{J}) && F(p) \\
	&& \hat{G}( \T_\e \alpha_{J}) && G(\T_\e p)
	\arrow[ from=1-1, to=1-3]
	\arrow[ from=1-1, to=2-3]
	\arrow[ from=1-3, to=2-5]
	\arrow[from=2-3, to=2-5]
\end{tikzcd}
\end{equation*}
commutes by Eqn.~\eqref{eq:phi_commute}, and the right parallelogram 
    \[\begin{tikzcd}[cramped]
	&&\hat{F}(\alpha_{J}') \ar[rr] && F(p') \\
	\hat{G}( \T_\e \alpha_{J}) \ar[rr] \ar[urr] && G(\T_\e p) \ar[urr]
\end{tikzcd}\]
commutes by Eqn.~\eqref{eq:psi_commute}. 
Additionally, $F[ \alpha^{I}_{J} \leq  p]$ is isomorphism because for all $p \leq q$ in $\mathcal{I}^I_J \subset S_I$ the morphism $F[p \leq q]$ is an isomorphism. Then, by universal property it follows that $F[ \alpha^{I}_{J} \leq  p]$ is isomorphism as well.
Since all conditions of Lemma \ref{lemma_ll_triang}(1) are satisfied we can conclude that
\[
    \Ltd^p(\phi,\psi) \leq \Ltd^{\alpha_{J}}(\phi,\psi)
\]
finishing the proof.
\end{proof}

Analogously, we can define the diagrams 
$\Parallelogramr_\psi(\beta_{I}^{J},\beta_{I'}^{J'})$ and 
$\triangleu_{\phi,\psi}(\beta_{I}^{J}) $ 
with the corresponding losses 
$\Lpr^{\beta_{I}^{J},\beta_{I'}^{J'}}(\psi)$ and 
$\Ltu^{\beta_{I}^{J}}(\phi,\psi )$. 
Then a symmetric argument proves that for $p \leq q $ with $p \in \mathcal{J}_{I}^{J}$ and $q \in \mathcal{J}_{I}^{J}$, it holds that
\begin{equation*}
        \Lpr^{p,q}(\psi) \leq \Lpr^{\beta_{I}^{J},\beta_{I'}^{J'}}(\psi)
     \qquad \text{and} \qquad
      \Ltu^p(\phi,\psi) \leq \Ltu^{\beta_{I}^J}(\phi,\psi ).
\end{equation*}

Putting all of this infrastructure together and following the same argument as Thm.~\ref{thm:ControlledLossLine}, we have the final main theorem. 

\begin{theorem}
    \label{th_loss_Tk}
  Let $F$,$G$ in $C^P$ be cubical tame and $(\phi,\psi)$ be a given constructible, line-preserving $\T_\e$-assignment between them. 
  Then, the loss is given by 
    \begin{equation*}
    L(\phi,\psi) 
    = 
    \max \left\{  
    \max_{\hat{p} \prec \hat{q} \; \in \hat{B}} \{\Lpl^{\hat{p},\hat{q}}(\phi)\},  
    \max_{\hat{p} \in \{ \alpha_{J}^{I} \}} \{\Ltd^{\hat{p}}(\phi,\psi ) \}, 
    \max_{\tilde{p} \prec \tilde{q} \; \in \tilde{B}} \{\Lpr^{\tilde{p},\tilde{q}}(\psi)\} ,
    \max_{\tilde{p} \in \{ \beta_{J}^{I} \}} \{ \Ltu^{\tilde{p}}(\phi,\psi ) \} \right\}
    \end{equation*}
where 
$$\hat{B} = \{ \alpha_{J}^{I} \mid \alpha_{J}^{I} \text{ minimal in the poset } \{\alpha_{J'}^{I} \mid \mathcal{I}_{J'}^I \neq \emptyset, J' \in \Lambda \} \}$$
and 
$$\tilde{B} = \{ \beta_{I}^{J} \mid \beta_{I}^{J} \text{ minimal in the poset } \{\beta_{I'}^{J} \mid \mathcal{J}_{I'}^J \neq \emptyset, I' \in \Lambda'\} \}.$$ 
\end{theorem}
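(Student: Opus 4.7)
The plan is to adapt the argument of Thm.~\ref{thm:ControlledLossLine} to the product setting $P^k$. The cubes $\mathcal{I}^I_J\subseteq P^k$ and $\mathcal{J}^J_I\subseteq P^k$ play the roles of the intervals $I_{i,k}\subseteq P$, and the formal posets $\hat P,\tilde P$ equipped with their inverse-limit functors $\hat F,\hat G,\tilde F,\tilde G$ provide the representatives in place of $\alpha_{i,k}\in P$. The central analytical tool, Lem.~\ref{lem:lossInequalitiesForHatP}, already upgrades pointwise losses on $F,G$ to losses at representatives in $\hat F,\hat G$; what remains is a combinatorial reduction over the index posets.

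For the direction $L(\phi,\psi)\leq\mathrm{RHS}$, I would fix $p\leq q$ in $P^k$ and choose $I,I'\in\Lambda$, $J,J'\in\Lambda'$ with $p\in\mathcal{I}^I_J$ and $q\in\mathcal{I}^{I'}_{J'}$. Lem.~\ref{lem:lossInequalitiesForHatP} immediately gives $\Lpl^{p,q}(\phi)\leq\Lpl^{\alpha^I_J,\alpha^{I'}_{J'}}(\phi)$ and $\Ltd^q(\phi,\psi)\leq\Ltd^{\alpha^{I'}_{J'}}(\phi,\psi)$, with symmetric bounds for $\Lpr$ and $\Ltu$ in terms of $\beta^J_I\in\tilde P$. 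The triangle terms on the RHS already range over all $\alpha^I_J$ (and all $\beta^J_I$), so no further reduction is needed there; only the parallelogram terms require reducing arbitrary pairs $\alpha^I_J\leq\alpha^{I'}_{J'}$ down to $\prec$-pairs inside $\hat B$.

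For that reduction I would apply the ultrametric decomposition of Lem.~\ref{lemma_lll}(1) to $\Lpl^{\alpha^I_J,\alpha^{I'}_{J'}}(\phi)$ along a chain of successor steps in $\hat P$; the proof of Lem.~\ref{lemma_lll}(1) uses only functoriality and Cor.~\ref{coro_split}, so by Thm.~\ref{thm:hatF_is_functor} it transfers verbatim to $\hat F,\hat G$. The chain can be selected to pass through minimal representatives (elements of $\hat B$) with intermediate hops that stay within a single constant cube $S_I$. Hops of the latter type incur zero loss: by Lem.~\ref{lem:lossInequalitiesForHatP} applied pointwise and by constructibility of $(\phi,\psi)$, which forces $\Lpl^{p,q}(\phi)=0$ for all $p\leq q$ in $S_I$, any within-cube parallelogram loss is dominated by a vanishing pointwise loss. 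An analogous argument in $\tilde P$ handles the $\Lpr$ reduction to $\tilde B$.

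The reverse inequality $\mathrm{RHS}\leq L(\phi,\psi)$ follows because each loss at a representative is defined through the universal property of the relevant inverse limit and is therefore bounded above by the supremum of the corresponding pointwise losses over its defining cube, which are in turn bounded by $L(\phi,\psi)$. The main obstacle is the parallelogram reduction of the third paragraph: the formal poset $\hat P$ is richer than a linear order, and the minimal elements of $\{\alpha^I_{J'}\mid\mathcal{I}^I_{J'}\neq\emptyset\}$ can be mutually incomparable, so one must verify that the line-preserving hypothesis on $\T_\e$ makes the cubes $\mathcal{I}^I_J$ rectangular enough (via Lem.~\ref{lemma:interval_order}) for within-$S_I$ hops to be genuinely redundant. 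This same hypothesis is precisely what eliminates the Archimedean $\delta$-corrections that appeared in Thm.~\ref{thm:ControlledLossLine}: the inverse-limit functors $\hat F,\hat G$ absorb the would-be boundary contributions automatically.
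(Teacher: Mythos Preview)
Your overall strategy matches the paper's intended argument exactly: the paper's proof is the single sentence ``following the same argument as Thm.~\ref{thm:ControlledLossLine}'', and your proposal is precisely that adaptation, leaning on Lem.~\ref{lem:lossInequalitiesForHatP} for the passage from pointwise to representative losses and on Lem.~\ref{lemma_lll} (transferred to $\hat P$ via Thm.~\ref{thm:hatF_is_functor}) for the combinatorial reduction.

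There is one slip worth flagging. In your third paragraph you justify ``within-cube hops incur zero loss'' by invoking Lem.~\ref{lem:lossInequalitiesForHatP} together with constructibility; but that lemma gives $\Lpl^{p,q}(\phi)\leq\Lpl^{\alpha^I_J,\alpha^{I'}_{J'}}(\phi)$, i.e.\ pointwise losses are bounded \emph{above} by representative losses, not the reverse. So knowing $\Lpl^{p,q}(\phi)=0$ for $p,q\in S_I$ does not, via that lemma, force $\Lpl^{\alpha^I_J,\alpha^I_{J'}}(\phi)=0$. The correct argument is the one you yourself sketch in the fourth paragraph for the reverse inequality: since $\hat G(\T_\e\alpha^I_{J'})$ is an inverse limit, two elements there agree iff all their projections to $G(\T_\e q)$ agree, and unwinding Eqns.~\eqref{eq_F_commute} and~\eqref{eq:phi_commute} shows those projections are exactly the two sides of $\Parallelograml_\phi(p,q)$ for suitable $p\leq q$ in $S_I$, which coincide by constructibility. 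In fact this yields $\Lpl^{\alpha^I_J,\alpha^{I'}_{J'}}(\phi)=\sup_{q\in\mathcal{I}^{I'}_{J'}}\Lpl^{p_q,q}(\phi)$, which simultaneously delivers the within-cube vanishing and your reverse inequality $\mathrm{RHS}\leq L(\phi,\psi)$. Once this is repaired, the reduction to $\prec$-pairs in $\hat B$ proceeds via Lem.~\ref{lemma_lll}(1)--(2) in $\hat P$ as you outline, using that $\hat F[\alpha^I_{J_0}\leq\alpha^I_J]$ is an isomorphism whenever $J_0\leq J$ index subcubes of the same $S_I$.
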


Finally, we use this theorem to compute the loss for multiparameter persistence modules. 
Consider $F,G: P^k \to \Vec_{\mathbb{F}}$ to be cubical tame and $(\phi,\psi)$ a constructible $\T_\e$-assignment. 
First we set $E$ to be the maximum  of the number of $i$-critical coordinates of $F$ and $G$ over all indices $i$.
Then, we define $N$ to be the maximum difference in rank for all pairs of maps following around two sides of the relevant parallelogram and triangle diagrams for the loss function. 
Explicitly, this is
{\small
  \begin{align*}
      N = \max \Bigg\{ 
      &\max_{\hat{p} \prec \hat{q} \; \in \hat{B}} 
        \left\{\rk (\phi_{\hat{q}} \circ \hat F[\hat{p} \leq \hat{q}] - \hat G[\T_\e \hat{p} \leq \T_\e\hat{q}] \circ \phi_{\hat{p}}) \right\},  
      \max_{\hat{p} \in \{ \alpha_{J}^{I} \}} 
        \left\{\rk (\psi_{\T_\e \hat{p}} \phi_{\hat{p}} - \hat F[\hat{p} \leq \T_\e \T_\e \hat{p}] )   \right\},  \\
     &  \max_{\tilde{p} \prec \tilde{q} \; \in \tilde{B}} 
        \left\{\rk (\psi_{\tilde{q}} \circ \tilde G[\tilde{p} \leq \tilde{q}] -  \tilde F[\T_\e \tilde{p} \leq \T_\e \tilde{q}] \circ \psi_{\tilde{p}}) \right\},  
    \max_{\tilde{p} \in \{ {\beta}_{I}^{J} \}} 
        \left\{\rk (\phi_{\T_\e \tilde{p}}\psi_{\tilde{p}} - \tilde G[\tilde {p} \leq \T_\e \T_\e \tilde{p}] ) \right\}. 
     \Bigg\}
  \end{align*}
}
We then define $M$ as a bound for the number of reducing constants necessary to check, given explicitly as 
  \begin{equation*}
    M = \max \left\{ 
      \max_{\hat{p} \in \{ \alpha_{J}^{I} \}} 
        \left\{ D^{\hat G}_{\T_\e \hat{p}} , D^{\hat F}_{\T_\e \T_\e \hat{p}}\right\} , 
      \max_{\hat{p} \in \{ \beta_{I}^{J} \}} 
        \left\{ D^{\tilde F}_{\T_\e \tilde{p}} , D^{\tilde G}_{\T_\e \T_\e \tilde {p}}\right\} 
    \right\}.
  \end{equation*}
With these constants, we can bound the time to calculate the loss as follows.

\begin{corollary}
  \label{coro_vec_Tk}
  Consider $F,G: P^k \to Vec_{\mathbb{F}}$ to be $q$-tame and cubical tame and $(\phi,\psi)$ a constructible $\T_\e$-assignment. The time complexity of calculating $L(\phi,\psi)$ is 
  $$O(E^{2k} N^3\log(M))$$ 
  with $E$, $N$, and $M$ given as above.   
  \end{corollary}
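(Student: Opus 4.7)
The plan is to combine the representative-reduction provided by Thm.~\ref{th_loss_Tk} with the per-diagram complexity of Alg.~\ref{alg:lpl_calculation} (and its analogues for $\Lpr$, $\Ltd$, $\Ltu$). By Thm.~\ref{th_bounding} the assignment is admissible, and by Thm.~\ref{th_loss_Tk} it suffices to evaluate a loss term on a pair $\hat p \prec \hat q$ in $\hat B$, a pair $\tilde p \prec \tilde q$ in $\tilde B$, or a single representative $\alpha_J^I$ or $\beta_I^J$.

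The first step is counting these diagrams. Because each coordinate of $F$ has at most $E$ critical values and likewise for $G$, the index sets satisfy $|\Lambda|, |\Lambda'| = O(E^k)$. The representatives $\alpha_J^I$ and $\beta_I^J$ are indexed by those $(I,J) \in \Lambda \times \Lambda'$ for which $\mathcal{I}_J^I$, respectively $\mathcal{J}_I^J$, is non-empty, giving $|\hat B|, |\tilde B| = O(E^{2k})$. Covering pairs in the poset structure on $\hat B$ are obtained by incrementing one coordinate of $I$ or one coordinate of $J$, so each element is covered by $O(k)$ adjacent elements, and the total number of covering pairs remains $O(E^{2k})$ when $k$ is treated as constant. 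The same holds for $\tilde B$ and for triangle diagrams, which are indexed by single representatives.

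The second step is the cost of an individual diagram evaluation. Alg.~\ref{alg:lpl_calculation} performs a binary search over at most $M$ reducing constants; each iteration restricts a linear map to a subspace and computes the image via Gaussian elimination at cost $O(N^3)$, where $N$ bounds the relevant rank. Thus each diagram costs $O(N^3 \log M)$. The same analysis applies verbatim to $\Lpr$, $\Ltd$, $\Ltu$ after substituting the appropriate kernel-of-difference and reducing-constant sets, since $N$ and $M$ were defined in the paragraph preceding the corollary as uniform bounds across all four diagram types. Multiplying $O(E^{2k})$ diagrams by $O(N^3 \log M)$ per diagram then gives the stated bound.

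The main obstacle I anticipate is the counting step, specifically showing that the minimal elements of $\hat B$ and $\tilde B$ together with their covering relations really do dominate every loss term on all of $P^k$ rather than just on the chosen representatives. This is where I will lean on Lem.~\ref{lemma:compatible_order} and Lem.~\ref{lemma:interval_order}, which ensure that the product order on $\Lambda \times \Lambda'$ is compatible with the order on $P^k$ and that adjacent non-empty intervals have comparable representatives; combined with Lem.~\ref{lem:lossInequalitiesForHatP}, these guarantee that the reduced losses on $\hat B$ and $\tilde B$ upper-bound the corresponding losses on the whole poset, so the counting above captures all contributions to $L(\phi,\psi)$.
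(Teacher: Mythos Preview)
Your proposal is correct and follows essentially the same route as the paper: invoke Thm.~\ref{th_loss_Tk} to reduce to finitely many representative diagrams, bound their number by $O(E^{2k})$ via $|\Lambda|\cdot|\Lambda'|\leq (2E+1)^{2k}$, and multiply by the $O(N^3\log M)$ per-diagram cost from Alg.~\ref{alg:lpl_calculation}. Your final ``obstacle'' paragraph is unnecessary, since the domination of the representative losses over all of $P^k$ is exactly the content of Thm.~\ref{th_loss_Tk} (via Lem.~\ref{lem:lossInequalitiesForHatP}), which you already cite; likewise the reference to Thm.~\ref{th_bounding} is irrelevant to the complexity argument and can be dropped.
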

  \begin{proof}
      From Algorithm \ref{alg:lpl_calculation} we know that the time required to calculate $\Lpl^{p,q}(\phi)$ is $O( n^3\log(m))$ where \mbox{$n = \rk (\phi_q \circ  F[p \leq q]-  G[\T_\e p \leq \T_\e q] \circ \phi_p)$} and $m = |D^G_{\T_\e p}|$. 
      By construction $n \leq N$ and $m \leq M$.
      We have a similar expression for $\Lpr^{p,q}(\phi)$. 
      An analogous argument gives that the time required to calculate 
      $\Ltd^{p}(\phi,\psi)$ 
      is $O( n'^3\log(m'))$ 
      where \mbox{$n' = \rk (F[p \leq \T_\e \T_\e p]-  \psi_{\T_\e p} \circ \phi_p)$\}} 
      and $m' = |D^F_{\T_\e \T_\e p}|$. 
      Again $n' \leq N$ and $m' \leq M$.
      Likewise, we have a expression for $\Ltu^{p}(\phi,\psi)$.
      Thus, $O(N^3 log (M))$ bounds the required time to calculate the loss of any diagram. 
      Finally, by Thm.~\ref{th_loss_Tk}, the number of  diagrams that we need to check is bounded by 
      $2|\{\alpha_{J}^{I}\}| + 2|\{{\beta}_{I}^{J}\}| $, and each cardinality is upper bounded by $ (2E+1)^{2k} $. 
  \end{proof}

\section{Discussion}

\label{sec_discussion}

In this paper, in the context of persistence modules valued in concrete categories indexed by a poset, we defined a loss function that measures how far an assignment (i.e.~family of maps that resemble a natural transformation) is from being an interleaving. 
We showed that this loss provides an upper bound for the interleaving distance between generalized persistence modules. 
Then, we optimized the computation of the loss for persistence modules indexed by $P$ pr $P^k$ when $P$ is a complete linear order. 
Moreover, we gave polynomial time algorithms for the computation of the loss for persistence modules valued in categories whose objects have underlying finite sets or the category of vector spaces. 

Our results open the possibility of an algorithmic approach for obtaining upper bounds on the interleaving distance for commonly used constructions, such as multiparameter persistence modules, for which the exact computation of the interleaving distance has been shown to be NP-complete \citep{Bjerkevik2018}. 
However, we emphasize that our approach does not guarantee a tight bound. 
Hence, a natural direction for future work is the implementation of algorithms that search for the best possible bound over all possible assignments. 
While this problem will inevitably be NP-hard, additional care in developing strategies to explore the space of assignments might yield experimentally practical results, as was done in the case of this loss function for mapper graphs using linear programming \citep{Chambers2025}.

Furthermore, our framework applies to any structure that can be represented as a generalized persistence module, including merge trees \citep{Morozov2013}, Reeb graphs \citep{deSilva2016}, Mapper graphs \citep{2024bounding}, sheaves and cosheaves in concrete categories.
Consequently, the proposed loss yields an upper bound on the interleaving distance between such structures. 
The development of algorithms to compute this loss requires further work and is inherently context dependent, relying on the specific properties of each structure; for example, the existence of basis elements when the indexing poset consists of collections of open sets \citep{2024bounding}.

There are some immediate generalizations for our work which would be interesting for future work. 
One can replace  the notion of $\T_\e$-interleaving by weak $\T_\e$-interleaving \citep{Flow} and use a more general metric. 
Another route is exploring the interleaving distance of persistence modules defined in non-concrete such as the category of topological spaces. 
The definition of our loss function is based on a distance between elements inside an object of the category. 
Thus, the key is finding a definition of a loss function that is independent of the content of the object. 
Finally, we optimize the computation of the loss for persistence modules indexed by $P^k$ when $P$ is a complete linear order, due to the applications to multiparameter persistence that we were interested in. 
In future work, we would like to explore such optimization when the poset $P$ has other properties like modularity and latticity. 
For instance, sheaves and cosheaves are naturally indexed by distributive lattices.

\paragraph{Acknowledgements}
EM and AO are grateful to Peter Bubenik, Vin de Silva, and H{\aa}vard Bjerkevik for helpful discussions.
This work was supported in part by the National Science Foundation through grants CCF-2142713 and CCF-2106578.

\printbibliography

\end{document}